\documentclass[12pt,reqno]{amsart}

\usepackage{amssymb,amsthm}
\usepackage{amsfonts,cmtiup,comment,stmaryrd}

\usepackage{mathrsfs,cmtiup}

\makeatletter
\def\blfootnote{\xdef\@thefnmark{}\@footnotetext}
\makeatother

\newtheorem{theorem}{Theorem}[section]
\newtheorem{lemma}[theorem]{Lemma}
\newtheorem{proposition}[theorem]{Proposition}
\newtheorem{corollary}[theorem]{Corollary}

\theoremstyle{definition}

\newtheorem{remark}[theorem]{Remark}
\newtheorem*{definition*}{Definition}

\let\leq=\leqslant
\let\geq=\geqslant
\setlength{\topmargin}{-0.5cm} \setlength{\oddsidemargin}{-0.0cm}
\setlength{\evensidemargin}{-0.0cm}
\pagestyle{plain}
\textwidth=16cm
\textheight=24cm
\footskip=1cm
\numberwithin{equation}{section}

\begin{document}
\title{Compact groups with countable Engel sinks}

\author{E. I. Khukhro}
\address{Charlotte Scott Research Centre for Algebra, University of Lincoln, U.K., and \newline \indent  Sobolev Institute of Mathematics, Novosibirsk, 630090, Russia}
\email{khukhro@yahoo.co.uk}

\author{P. Shumyatsky}

\address{Department of Mathematics, University of Brasilia, DF~70910-900, Brazil}
\email{pavel@unb.br}

\keywords{Compact groups; profinite groups; pro-$p$ groups; finite groups; Lie ring method; Engel condition; locally nilpotent groups}
\subjclass[2010]{20E18, 20F19, 20F45, 22C05}

\begin{abstract}
An Engel sink of an element $g$ of a group $G$ is a set ${\mathscr E}(g)$ such that for every $x\in G$ all sufficiently long commutators $[...[[x,g],g],\dots ,g]$ belong to ${\mathscr E}(g)$.  (Thus, $g$ is an Engel element precisely when we can choose ${\mathscr E}(g)=\{ 1\}$.) It is proved that if every element of a compact (Hausdorff) group $G$ has a countable (or finite) Engel sink, then $G$ has a finite normal subgroup $N$ such that $G/N$ is locally nilpotent. This settles a question suggested by J.~S.~Wilson.
\end{abstract}
\maketitle

\section{Introduction}

A group $G$ is called an Engel group if for every $x,g\in G$ the equation $[x,\,{}_{n} g]=1$ holds for some $n=n(x,g)$ depending on $x$ and $g$.
Henceforth, we use the left-normed simple commutator notation
$[a_1,a_2,a_3,\dots ,a_r]:=[...[[a_1,a_2],a_3],\dots ,a_r]$ and the abbreviation $[a,\,{}_kb]:=[a,b,b,\dots, b]$ where $b$ is repeated $k$ times. A group is said to be locally nilpotent if every finite subset generates a nilpotent subgroup. Clearly, any locally nilpotent group is an Engel group. Wilson and Zelmanov \cite{wi-ze} proved the converse for profinite groups: any Engel profinite group is locally nilpotent. Later Medvedev \cite{med} extended this result to Engel compact  groups. (Henceforth by compact groups we mean compact Hausdorff groups.)

Generalizations of Engel groups can be defined in terms of Engel sinks.

\begin{definition*} \label{d}
 An \textit{Engel sink} of an element $g$ of a group $G$ is a set ${\mathscr E}(g)$ such that for every $x\in G$ all sufficiently long commutators $[x,g,g,\dots ,g]$ belong to ${\mathscr E}(g)$, that is, for every $x\in G$ there is a positive integer $n(x,g)$ such that
 $[x,\,{}_{n}g]\in {\mathscr E}(g)$ for all $n\geq n(x,g).
 $
 \end{definition*}
 \noindent (Thus, $g$ is an Engel element precisely when we can choose ${\mathscr E}(g)=\{ 1\}$, and $G$ is an Engel group when we can choose ${\mathscr E}(g)=\{ 1\}$ for all $g\in G$.)

Earlier we considered in \cite{khu-shu} compact groups $G$ in which  every element has a finite Engel sink and proved the following theorem.

\begin{theorem}[{\cite[Theorem~1.1]{khu-shu}}]\label{t4.1}
If every element of a compact group $G$ has a finite Engel sink, then $G$ has a finite normal subgroup $N$ such that $G/N$ is locally nilpotent.
\end{theorem}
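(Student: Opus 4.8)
The plan is to deduce the theorem from Medvedev's result that an Engel compact group is locally nilpotent, by exhibiting a single finite normal subgroup modulo which $G$ becomes Engel. For each $g$ I would first pass to the \emph{minimal} Engel sink
\[
\mathscr E(g)=\bigcap_{m\ge 1}\overline{\{[x,{}_n g]:x\in G,\ n\ge m\}},
\]
which is again finite, is invariant under the continuous map $c_g\colon y\mapsto[y,g]$, and on which $c_g$ acts as a permutation; in particular each element of $\mathscr E(g)$ is realised as $[x,{}_n g]$ for some $x$ and for arbitrarily large $n$. Two consequences drive everything. First, $[x,{}_n g]$ lies in the $(n+1)$-st lower central term $\gamma_{n+1}(G)$, so the recurrence just noted forces $\mathscr E(g)\subseteq \overline{\bigcap_n\gamma_n(G)}=:R$, the closed nilpotent residual. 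Second, for a closed normal subgroup $N$ the quotient $G/N$ is Engel precisely when $\mathscr E(g)\subseteq N$ for every $g$. Combining these, the whole theorem reduces to a single assertion: \emph{$R$ is finite}. Indeed, if $R$ is finite then $G/R$ is Engel, hence locally nilpotent by Medvedev, and $N=R$ is the required finite normal subgroup.

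To prove $R$ finite I would use the structure theory of compact groups to treat the identity component $G^{0}$ and the profinite quotient $G/G^{0}$ in turn. For the connected part the claim is that $G^{0}$ is abelian. Were it not, $G^{0}$ would have a continuous quotient isomorphic to a compact simple Lie group $S$; choosing $\bar g\in S$ so that $\operatorname{Ad}(\bar g)$ has an eigenvalue on the unit circle that is not a root of unity, a first-order computation in the Lie algebra (using $[x,g]=\exp((\operatorname{Ad}(g^{-1})-1)\log x+\cdots)$ near the identity) produces $\bar x$ for which the iterates $[\bar x,{}_n\bar g]$ are pairwise distinct. Thus $\bar g$ has no finite Engel sink, contradicting that a finite sink in $G$ maps onto a finite sink in every quotient. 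Hence $G^{0}$ is abelian, and for each $g$ the map $c_g$ carries the connected group $G^{0}$ into itself with trivial eventual image, so that $R\cap G^{0}$ is finite and $R$ is determined, up to finite error, by its image in $\bar G:=G/G^{0}$. This reduces the finiteness of $R$ to that of the nilpotent residual $\bar R=\overline{\bigcap_n\gamma_n(\bar G)}$ of the profinite group $\bar G$.

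The main work, and the principal obstacle, is this profinite case. Here the subtlety is that the sinks, though finite, need not be of bounded cardinality, so no uniform bound is available a priori; one must extract a single finite $\bar R$ by a compactness argument across the finite continuous quotients. I would proceed in two layers. For the nonabelian composition factors I would invoke the theory of finite simple groups to show that an element of a finite simple group is very far from Engel---its shortest sinks grow with the order of the group---so that a fixed finite sink cannot absorb unboundedly many nonabelian simple sections; this confines the non-pronilpotent part to a finite normal subgroup and reduces matters to a pronilpotent, and hence essentially pro-$p$, situation. For a pro-$p$ group $P$ I would apply the Lie ring method: form the graded Lie algebra $L$ over $\mathbb F_p$ associated with the Zassenhaus filtration of $P$, translate the finite-sink condition on $g$ into the statement that $\operatorname{ad}(\bar g)$ is nilpotent on $L$ modulo a bounded-dimensional part (so that $L$ satisfies an Engel condition up to finitely many low-degree components), and conclude by Zelmanov's theorem that $L$, and therefore $P$, is nilpotent-by-finite, whence $\gamma_\infty(P)$ is finite.

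Finally I would assemble the pieces: the finite nilpotent residuals produced in the finite quotients of $\bar G$ form an inverse system that stabilises, by compactness, to the finite normal subgroup $\bar R$; combining this with the finite contribution of $G^{0}$ from the connected analysis yields a finite normal $R$, and then $G/R$ is Engel and locally nilpotent by Medvedev, completing the argument. I expect the decisive difficulty to be the pro-$p$ step---faithfully translating the \emph{finite} (not bounded) Engel-sink hypothesis into a genuine Engel condition on the associated Lie algebra so that Zelmanov's theorem applies---together with the bookkeeping needed to combine the unbounded per-element sink data into one finite normal subgroup via compactness. The nonabelian part, by contrast, should succumb to known quantitative facts about elements of finite simple groups.
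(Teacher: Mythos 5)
First, note that the paper does not prove Theorem~\ref{t4.1} here: it is quoted from \cite{khu-shu}, so your proposal must be measured against the argument of that earlier paper, whose architecture the present one recalls. Your opening reduction is correct and is indeed how things are organised: by Lemma~\ref{l-min} every element $s$ of a minimal sink satisfies $s=[s,{}_{kl}g]$ for all $l$, hence lies in $\gamma_{\infty}(G)$, so it suffices to prove that $\gamma_{\infty}(G)$ is finite, after which Medvedev's theorem \cite{med} (or Theorem~\ref{t-wz} in the profinite case) finishes the job. Eliminating simple compact Lie factors is also the right first step (Lemma~\ref{l-lie} does it by an explicit computation in $SO_3(\mathbb{R})$), though your eigenvalue heuristic is not yet a proof: for $\lambda$ on the unit circle the relevant multiplier is $\lambda-1$, which may well have modulus less than $1$, in which case the iterates $[\bar x,{}_n\bar g]$ converge to the identity instead of forming an infinite sink; the $SO_3(\mathbb{R})$ computation works because the induced map on angles is multiplication by $-2$, which is expanding.

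The genuine gap is in the profinite core. Discarding non-abelian simple sections does not ``reduce matters to a pronilpotent situation'': a prosoluble group has no such sections at all, and the hard part of \cite{khu-shu} is precisely to show that a prosoluble group with finite sinks has finite $\gamma_{\infty}$. That step needs coprime automorphisms $\varphi$ with finite $[G,\varphi]$, a Hall--Higman-type theorem, basis normalizers, and induction on Fitting height (compare Proposition~\ref{pr-height} and Theorem~\ref{t3} here, where the finiteness of the sinks is spent on a count over infinitely many primes); your sketch does not touch any of this. Conversely, the step on which you invest the most machinery is essentially free: for a pronilpotent (in particular pro-$p$) group $P$ with finite sinks the minimal sinks lie in $\gamma_{\infty}(P)=1$, so $P$ is an Engel group and Wilson--Zelmanov applies directly --- exactly the ``easy to see'' remark in the introduction. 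The Zassenhaus filtration and Zelmanov's theorem are needed only for the countable-sink version proved in the present paper, and in any case Theorem~\ref{tz} yields nilpotency of $L_p(P)$ for finitely generated $P$, not ``nilpotent-by-finite'' for arbitrary $P$. Finally, your closing appeal to compactness (the finite nilpotent residuals of the finite quotients ``stabilise'') presupposes a uniform bound on $|\gamma_{\infty}(G/M)|$ over open normal subgroups $M$, which is precisely the quantitative content to be established (via bounds on the nonsoluble length of the finite quotients, as in \cite{wil83}), not a consequence of compactness.
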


In addition, a similar result of quantitative nature was also proved for finite groups.

In discussions with John Wilson a question was raised whether the condition on Engel sinks can be weakened to being countable. (By ``countable'' we mean ``finite or  denumerable''.) In this paper we answer this question in the affirmative.

\begin{theorem}\label{t1}
Suppose that $G$ is a compact group in which every element has a countable Engel sink. Then $G$ has a finite normal subgroup $N$ such that $G/N$ is locally nilpotent.
\end{theorem}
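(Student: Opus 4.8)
The plan is to reduce everything to the finite-sink situation already settled by Theorem~\ref{t4.1}: I will show that, under the countability hypothesis, \emph{every} element $g\in G$ in fact admits a \emph{finite} Engel sink, and then invoke Theorem~\ref{t4.1} verbatim to produce the finite normal subgroup $N$ with $G/N$ locally nilpotent. In this way the global problem collapses to a one-element statement, namely that a countable Engel sink can always be upgraded to a finite one. For a fixed $g$ write $\varphi_g\colon x\mapsto[x,g]$, so that $[x,{}_{n}g]=\varphi_g^{n}(x)$; this map is continuous, and since $G$ is compact each image $\varphi_g^{n}(G)$ is closed.

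The heart of the matter is a Baire-category observation that uses countability directly. Let $P=\{y\in G:\varphi_g^{k}(y)\in\mathscr{E}(g)\text{ for all }k\geq0\}$ be the set of points whose entire forward orbit lies in the sink. Then $P\subseteq\mathscr{E}(g)$ is countable, and the defining property of a sink shows that for every $x\in G$ one has $\varphi_g^{n}(x)\in P$ for some $n=n(x,g)$; hence
\[
G=\bigcup_{n\geq0}\ \bigcup_{s\in P}\varphi_g^{-n}(s),
\]
a countable union of closed sets. As a compact Hausdorff group is a Baire space, some $\varphi_g^{-n_0}(s_0)$ must have non-empty interior $V$; that is, $[x,{}_{n_0}g]=s_0$ for all $x$ in a non-empty open set $V$. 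This is the crucial consequence of countability: the iterated-commutator map is \emph{constant} on an open set.

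Next I would turn this local rigidity into a genuine Engel condition by exploiting homogeneity, treating the connected and totally disconnected parts of $G$ separately. On the identity component $G_0$ the word map $x\mapsto[x,{}_{n_0}g]$ is analytic on every compact Lie quotient, so being constant on an open set forces it to be constant on the whole component; evaluating at the identity gives $[x,{}_{n_0}g]=1$ for all $x\in G_0$, i.e. $g$ is a bounded left Engel element relative to $G_0$. On the profinite quotient $G/G_0$ one may work prime by prime in the pro-$p$ Sylow subgroups, so that $V$ contains a coset $x_0U$ of an open normal subgroup $U$ and $[x_0u,{}_{n_0}g]=s_0$ for all $u\in U$. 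Linearising through the Lie ring method, in the graded Lie algebra $L$ associated with a suitable filtration the top-degree contribution of $[x_0u,{}_{n_0}g]$ is $(\operatorname{ad}\bar g)^{n_0}\bar u$, so constancy on the coset forces $(\operatorname{ad}\bar g)^{n_0}=0$ on the finite-codimension subalgebra arising from $U$. The Wilson--Zelmanov machinery based on Zelmanov's theorems \cite{wi-ze} then upgrades nilpotency of $\operatorname{ad}\bar g$ on an open subalgebra to nilpotency on all of $L$; lifting back, $g$ is an Engel element modulo a finite normal subgroup, and therefore its Engel sink is finite.

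The step I expect to be the main obstacle is precisely this linearisation and globalisation in the pro-$p$ part: controlling the iterated commutators $[x_0u,{}_{n_0}g]$ in the genuinely non-abelian group, reading off the correct ad-nilpotency statement in the associated Lie algebra, and feeding it into the Wilson--Zelmanov theorem to pass from ``nilpotent on an open subalgebra'' to ``globally Engel''. Subsidiary difficulties are the reductions to the profinite and then pro-$p$ setting, the reassembly across the connected component and across all primes, and checking that the resulting data are uniform enough to apply Theorem~\ref{t4.1}. Once each element of $G$ is shown to possess a finite Engel sink, the conclusion is immediate from Theorem~\ref{t4.1}.
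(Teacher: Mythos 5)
Your opening move (Baire category applied to the countable sink, giving that $x\mapsto[x,\,{}_{n_0}g]$ is constant on a non-empty open set) is exactly Lemma~\ref{l1} of the paper, and your closing move (reduce to finite sinks and invoke Theorem~\ref{t4.1}) is indeed how the paper finishes in the compact case. But the bridge between these two steps --- upgrading ``constant on an open set'' to ``finite Engel sink'', element by element --- is where essentially all the content lies, and your sketch of it does not go through. Constancy of $u\mapsto[x_0u,\,{}_{n_0}g]$ on a coset of an open subgroup does not by itself bound the sink: the translation argument that converts an open set of constancy into a finite sink (Lemma~\ref{l-ab}) needs an abelian normal subgroup with locally nilpotent quotient, since $[a_iu,\,{}_ng]=[a_i,\,{}_ng][u,\,{}_ng]$ fails in general. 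Your Lie-algebra step is also not correct as stated: the top-degree term of $[x_0u,\,{}_{n_0}g]$ is not $(\operatorname{ad}\bar g)^{n_0}\bar u$, and the Wilson--Zelmanov theorem does not ``upgrade nilpotency of $\operatorname{ad}\bar g$ on an open subalgebra to nilpotency on all of $L$'' --- it converts a coset identity into a polynomial identity for $L_p(G)$. After that one still needs ad-nilpotency of all commutators in the generators (Lemma~\ref{l-ad}, which requires the stronger input of Lemma~\ref{l-engk}, namely $[[nb,\,{}_ia],a^{p^k}]=1$, i.e.\ a power of $a$ \emph{centralizing} the sink element, not mere constancy), then Zelmanov's theorem, Lazard's theorem to get $p$-adic analyticity, Breuillard--Gelander to get solubility, and an induction on derived length. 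Even granting all of this, one only obtains local nilpotency of pro-$p$ (hence pronilpotent) groups --- not finite sinks for elements of an arbitrary compact $G$.

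The pieces genuinely missing from your plan are the ones that occupy most of the paper: the coprime-action lemmas (in particular that the set $\{[g,\varphi]\}$ is finite because a closed subgroup of a compact group has finite or uncountable index, Lemmas~\ref{l-unci} and~\ref{l-copr2}); the prosoluble case via Fitting height, system normalizers and a Hall--Higman-type fixed-point theorem; the continuum argument showing that only finitely many Sylow components of a sink can be nontrivial (this is where countability is used a second time, quite independently of Baire category); the bound on nonprosoluble length via Wilson's theorem on cosets of Sylow $2$-subgroups of bounded exponent; the elimination of infinite Cartesian products of non-abelian finite simple groups; and the analysis of the connected component, which must be shown abelian (via the $SO_3(\mathbb{R})$ computation of Lemma~\ref{l-lie}) and then central in the preimage of the finite sink-generated subgroup, so that Schur's theorem applies. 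Without these, ``each element of $G$ has a finite Engel sink'' is not established and the appeal to Theorem~\ref{t4.1} cannot be made.
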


Thus, if all elements of a compact group have at most countable Engel sinks, then in fact all Engel sinks can be chosen to be  finite (and contained in the same finite normal subgroup). In Theorem~\ref{t1} it also follows that there is a locally nilpotent subgroup of finite index -- just consider $C_G( N)$.

The proof uses the aforementioned Wilson--Zelmanov theorem for profinite groups. First the case of pro-$p$ groups  is considered, where Lie ring methods are applied including Zelmanov's theorem on Lie algebras satisfying a polynomial identity and generated by elements all of whose products are ad-nilpotent
\cite{ze92,ze95,ze17}. As we noted in \cite{khu-shu}, it is easy to see that if every element  of a pro-$p$ group has a finite Engel sink,  then the group is locally
nilpotent. But in the present paper, with countable Engel  sinks, the case of pro-$p$ groups requires substantial efforts.
Then the case of prosoluble groups is settled by using properties of Engel sinks in coprime actions and a Hall--Higman--type theorem.
The general case of profinite groups is dealt with by bounding the nonsoluble length of the group, which enables induction on this length. (We introduced the nonsoluble length in \cite{khu-shu131}, although bounds for nonsoluble length had been implicitly used in various earlier papers, for example, in the celebrated Hall--Higman paper \cite{ha-hi},
or in Wilson's  paper \cite{wil83}; more recently, bounds for the nonsoluble length were used in the study of verbal subgroups in finite and profinite groups \cite{dms1, 68, austral, khu-shu132}.)
Finally, the result for compact groups is derived with the use of the structure theorems for compact groups.

\section{Preliminaries}
In this section we recall some  notation and terminology and establish some general properties of Engel sinks in compact and profinite groups.

Our notation and terminology for profinite and compact groups is standard; see, for example,  \cite{rib-zal},  \cite{wil}, and \cite{hof-mor}.  A subgroup (topologically) generated by a subset $S$ is denoted by $\langle S\rangle$. Recall that centralizers are closed subgroups, while commutator subgroups $[B,A]=\langle [b,a]\mid b\in B,\;a\in A\rangle$ are the closures of the corresponding abstract commutator subgroups.

For a group $A$ acting by continuous automorphisms on a group $B$ we use the usual notation for commutators $[b,a]=b^{-1}b^a$ and commutator subgroups $[B,A]=\langle [b,a]\mid b\in B,\;a\in A\rangle$, as well as for centralizers $C_B(A)=\{b\in B\mid b^a=b \text{ for all }a\in A\}$
and $C_A(B)=\{a\in A\mid b^a=b\text{ for all }b\in B\}$.

We record for convenience the following simple lemma.

\begin{lemma} \label{l-fng}
Suppose that $\varphi$ is a continuous automorphism of a compact group $G$ such that
$G=[G,\varphi ]$. If $N$ is a normal subgroup of $G$ contained in $C_G(\varphi )$, then $N\leq Z(G)$.
\end{lemma}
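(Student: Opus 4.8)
The plan is to reduce the claim to a direct commutator computation that exploits the two hypotheses in tandem: normality of $N$ in $G$ forces each commutator $[n,g]$ back into $N$, while $N\le C_G(\varphi)$ means $\varphi$ fixes such commutators elementwise. Fix $n\in N$; it suffices to prove that the closed subgroup $C_G(n)$ is all of $G$, for then $n\in Z(G)$, and as $n$ is arbitrary this gives $N\le Z(G)$.

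First I would record the key identity. For any $g\in G$ we have $[n,g]\in N$ because $N$ is normal, whence $[n,g]^{\varphi}=[n,g]$ because $N\le C_G(\varphi)$. On the other hand, since $\varphi$ is an automorphism and $n^{\varphi}=n$,
\[
[n,g]^{\varphi}=[n^{\varphi},g^{\varphi}]=[n,g^{\varphi}].
\]
Comparing the two expressions yields $[n,g]=[n,g^{\varphi}]$, that is $n^{g}=n^{g^{\varphi}}$, for every $g\in G$.

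Next I would rewrite this as a centralizing condition. From $n^{g}=n^{g^{\varphi}}$ one gets $n^{g(g^{\varphi})^{-1}}=n$, so $g(g^{\varphi})^{-1}\in C_G(n)$; and since $g(g^{\varphi})^{-1}=g\,(g^{-1})^{\varphi}=[g^{-1},\varphi]$, this says precisely that $[g^{-1},\varphi]\in C_G(n)$ for all $g\in G$. As $g$ runs over $G$ the elements $[g^{-1},\varphi]$ run over $\{[h,\varphi]\mid h\in G\}$, which by definition generate (topologically) the subgroup $[G,\varphi]=G$.

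Finally I would invoke the topology. The centralizer $C_G(n)$ is a closed subgroup containing every generator $[h,\varphi]$ of $[G,\varphi]$, hence it contains the closure $[G,\varphi]=G$; thus $C_G(n)=G$ and $n\in Z(G)$. The computation itself is elementary, so the only point requiring care — and the place where the compact (topological) structure enters — is this last passage from ``all commutators $[h,\varphi]$ centralize $n$'' to ``all of $G$ centralizes $n$'', which relies on centralizers being closed and on $[G,\varphi]$ denoting the closure of the abstract commutator subgroup. (One could equivalently package the first three steps via the three subgroups lemma applied in the semidirect product $G\rtimes\langle\varphi\rangle$, using $[N,\varphi]=1$ together with $[[N,G],\varphi]=1$, but the direct route avoids forming the extension.)
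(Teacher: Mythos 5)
Your proof is correct and is essentially the paper's argument: both show that every commutator $[g,\varphi]$ centralizes $N$ (you by a direct elementwise computation using normality of $N$ and $N\le C_G(\varphi)$, the paper by observing that $C_{G\langle\varphi\rangle}(N)$ is a normal closed subgroup of the semidirect product containing $\varphi$), and then conclude from closedness of the centralizer and $G=[G,\varphi]$ being the closure of the abstract commutator subgroup. The parenthetical remark at the end of your write-up correctly identifies this equivalence, so no further comment is needed.
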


\begin{proof}
The centralizer $C_{G\langle \varphi\rangle}(N)$ of $N$ in the semidirect product $G\langle \varphi\rangle$ is a normal subgroup containing $\varphi$ by hypothesis. Hence,  $[g,\varphi]\in C_{G\langle \varphi\rangle}(N)$ for any $g\in G$. Since $C_{G\langle \varphi\rangle}(N)$ is a closed subgroup, the whole commutator subgroup $[G,\varphi ]$ is contained in $C_{G\langle \varphi\rangle}(N)$, and therefore, $G=[G,\varphi]\leq C_{G\langle \varphi\rangle}(N)$. This means that $N\leq Z(G)$.
\end{proof}

We denote by $\pi (k)$ the set of prime divisors of $k$, where $k$ may be a positive integer or a Steinitz number, and by $\pi (G)$ the set of prime divisors of the orders of elements of a (profinite) group $G$. Let $\sigma$ be a set of primes. An element $g$ of a group is  a $\sigma$-element if $\pi(|g|)\subseteq \sigma$, and a group $G$ is a $\sigma$-group if all of its elements are $\sigma$-elements. We denote by $\sigma'$ the complement of $\sigma$ in the set of all primes. When $\sigma=\{p\}$,  we write $p$-element, $p'$-element, etc.

Recall that a pro-$p$ group  is an inverse limit of finite $p$-groups, a pro-$\sigma $ group is an inverse limit of finite $\sigma$-groups, a pronilpotent group is an inverse limit of finite nilpotent groups, a prosoluble group is an inverse limit of finite soluble groups.

We denote by  $\gamma _{\infty}(G)=\bigcap _i\gamma _i(G)$ the intersection of the lower central series of a group $G$. A profinite group $G$ is pronilpotent if and only if $\gamma _{\infty}(G)=1$.

Profinite groups have Sylow $p$-subgroups and satisfy analogues of the Sylow theorems.  Prosoluble groups satisfy analogues of the theorems of Hall and Chunikhin on Hall $\pi$-subgroups and Sylow bases. We refer the reader to the corresponding chapters in \cite[Ch.~2]{rib-zal} and \cite[Ch.~2]{wil}. We add a simple folklore lemma.

\begin{lemma}\label{l-prosol-by-prosol}
A profinite group $G$ that is an extension of a prosoluble group $N$ by a prosoluble group $G/N$ is prosoluble.
\end{lemma}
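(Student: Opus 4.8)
The plan is to use the standard characterization that a profinite group is prosoluble precisely when all of its finite continuous quotients are soluble. Thus it suffices to show that $G/U$ is soluble for every open normal subgroup $U$ of $G$, and then conclude that $G$, being the inverse limit of its finite continuous quotients, is prosoluble.

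So fix an open normal subgroup $U$ of $G$. First I would examine the image of $N$ in $G/U$, namely $NU/U\cong N/(N\cap U)$. Since $U$ is open in $G$, the intersection $N\cap U$ is open in $N$, so $N/(N\cap U)$ is a finite continuous quotient of the prosoluble group $N$; hence $NU/U$ is soluble. Next I would pass to the corresponding quotient $(G/U)/(NU/U)\cong G/NU$. Because $U$ is open and contained in $NU$, the subgroup $NU$ is open in $G$, and its image is open in $G/N$; therefore $G/NU\cong(G/N)/(NU/N)$ is a finite continuous quotient of the prosoluble group $G/N$, and so it too is soluble.

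Finally, the finite group $G/U$ has a normal subgroup $NU/U$ that is soluble and whose quotient $(G/U)/(NU/U)$ is soluble, and an extension of a soluble group by a soluble group is soluble; hence $G/U$ is soluble. As $U$ was an arbitrary open normal subgroup, every finite continuous quotient of $G$ is soluble, whence $G$ is prosoluble. There is no genuine difficulty in this argument beyond the topological bookkeeping: the only points requiring care are that openness is preserved both on restricting to $N$ (so that $N\cap U$ is open in $N$) and on projecting to $G/N$ (so that $NU/N$ is open in $G/N$), which is exactly what guarantees that the two auxiliary quotients really are \emph{finite} continuous quotients of $N$ and of $G/N$ to which their prosolubility applies.
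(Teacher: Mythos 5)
Your argument is correct and is essentially identical to the paper's own proof: both show that for any open normal subgroup $U$ the finite quotient $G/U$ is an extension of the soluble group $NU/U$ by the soluble group $G/NU$, hence soluble, so $G$ is prosoluble. You simply spell out the topological bookkeeping in more detail than the paper does.
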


\begin{proof}
The quotient $G/M$ of $G$ by an open normal subgroup $M$ is an extension of a finite soluble normal subgroup $NM/M$ by a finite soluble group $G/NM$, and therefore $G/M$ is soluble. Hence $G$ is prosoluble.
\end{proof}

We shall use several times the following well-known fact, which is straightforward from the Baire Category Theorem (see \cite[Theorem~34]{kel}).

\begin{theorem}\label{bct}
If a compact Hausdorff group is a countable union of closed subsets, then one of these subsets has non-empty interior.
\end{theorem}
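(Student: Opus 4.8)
The plan is to deduce this directly from the Baire Category Theorem by observing that a compact Hausdorff group is, in particular, a locally compact Hausdorff space, and that every locally compact Hausdorff space is a Baire space; this is precisely the form of the Baire Category Theorem recorded in \cite[Theorem~34]{kel}. Recall that in a Baire space the union of countably many closed sets each having empty interior again has empty interior; equivalently, a countable intersection of dense open sets is dense. So the whole argument reduces to a routine complementation passage between these two equivalent formulations.

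First I would note that $G$ is nonempty, since it contains the identity element, and that local compactness is immediate here: the whole space $G$ is itself a compact neighbourhood of each of its points, so \cite[Theorem~34]{kel} applies and $G$ is a Baire space. Now suppose $G=\bigcup_{n\geq 1}F_n$ with each $F_n$ closed, and assume for contradiction that every $F_n$ has empty interior. Then each complement $U_n=G\setminus F_n$ is a dense open subset of $G$, and $\bigcap_{n\geq 1}U_n=G\setminus\bigcup_{n\geq 1}F_n=\emptyset$. Since $G$ is a Baire space, this countable intersection of dense open sets must be dense, hence nonempty because $G\neq\emptyset$ --- a contradiction. Therefore at least one $F_n$ has nonempty interior.

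There is essentially no obstacle in this statement: the only point that requires any care is invoking the Baire property in the correct form, namely ensuring that a compact Hausdorff space qualifies as locally compact Hausdorff so that the cited theorem is applicable. Once that is in place, the deduction is entirely formal, which is why it can be stated as ``straightforward from the Baire Category Theorem.''
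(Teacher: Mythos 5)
Your proof is correct and follows exactly the route the paper intends: the paper simply states that this fact is ``straightforward from the Baire Category Theorem'' with a citation to Kelley, and your argument supplies the routine complementation details (compact Hausdorff implies Baire, then a countable union of closed sets with empty interior cannot cover a nonempty space). There is nothing to add.
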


Here is one fact following from this theorem.

\begin{lemma}\label{l-unci}
If $H$ is a closed subgroup of a compact group $G$, then the index of $H$ in $G$ is either finite or uncountable.
\end{lemma}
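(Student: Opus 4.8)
The plan is to argue by contraposition: I will show that if the index $[G:H]$ is countable, then it must in fact be finite, which is precisely the stated dichotomy. The whole point is to arrange $G$ as a countable union of closed sets so that Theorem~\ref{bct} can be brought to bear.

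First I would note that the left cosets $gH$ partition $G$, and that each of them is closed: since $H$ is closed and left translation $x\mapsto gx$ is a homeomorphism of $G$, every coset $gH$ is a closed subset. Thus, if the index is countable, $G$ is a countable union of the closed cosets $gH$, and Theorem~\ref{bct} applies: one of these cosets, say $gH$, has non-empty interior. Applying the homeomorphism $x\mapsto g^{-1}x$ then shows that $H$ itself has non-empty interior. Choosing an interior point $h\in H$ and translating by $h^{-1}$ produces an open neighbourhood of the identity contained in $H$, and a subgroup containing an open neighbourhood of the identity is open; hence $H$ is open in $G$.

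Finally, the cosets of the open subgroup $H$ form an open cover of the compact group $G$, and since they are pairwise disjoint, compactness forces there to be only finitely many of them. Therefore $[G:H]$ is finite, completing the contrapositive argument.

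I do not expect any serious obstacle here, as the statement is essentially a packaging of the Baire Category Theorem. The only points demanding a little care are the passage from a single coset with non-empty interior to the openness of $H$ (via translation to the identity), and the standard observation that an open subgroup of a compact group has finite index.
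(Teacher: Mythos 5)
Your proof is correct and follows exactly the same route as the paper: write $G$ as a countable union of closed cosets, apply Theorem~\ref{bct} to find a coset with non-empty interior, conclude that $H$ is open, and use compactness to get finite index. The only difference is that you spell out the translation arguments that the paper leaves implicit.
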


\begin{proof}
Suppose that the index of $H$ in $G$ is countable. Then $G$ is a countable union of closed cosets of $H$. By
Theorem~\ref{bct} one of these cosets has a non-empty interior, and therefore $H$ is an open subgroup. Since $G$ is compact, the index of $H$ must be finite.
\end{proof}

We now establish a few general properties of Engel sinks.
Clearly, the intersection of two Engel sinks of a given element $g$ of a group $G$ is again an Engel sink of $g$, with the corresponding function $n(x,g)$ being the maximum of the two functions. Therefore, if $g$ has a \textit{finite} Engel sink, then $g$ has a unique smallest Engel sink. If $\mathscr E(g)$ is a smallest Engel sink of $g$, then the restriction of the mapping $x\mapsto [x,g]$ to $\mathscr E(g)$ must be surjective, which gives the following characterization.

\begin{lemma}[{\cite[Lemma~2.1]{khu-shu}}]\label{l-min}
If an element $g$ of a group $G$ has a finite Engel sink, then $g$ has a smallest Engel sink $\mathscr E (g)$ and for every $s\in \mathscr E (g)$ there is $k\in {\mathbb N}$ such that  $s=[s,\,{}_kg]$.
\end{lemma}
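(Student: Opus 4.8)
The plan is to proceed in two stages, following the remarks preceding the statement. First I would establish the existence of a smallest Engel sink. Fix a finite Engel sink $\mathscr{E}_0$ of $g$, with associated function $n_0(x,g)$. The key observation is that if $\mathscr{E}_1$ and $\mathscr{E}_2$ are Engel sinks of $g$ with functions $n_1$ and $n_2$, then $\mathscr{E}_1\cap\mathscr{E}_2$ is again an Engel sink, with function $\max\{n_1,n_2\}$: once $n\ge\max\{n_1(x,g),n_2(x,g)\}$ the commutator $[x,\,{}_{n}g]$ lies in both sinks. Since $\mathscr{E}_0$ is finite it has only finitely many subsets, so there are only finitely many Engel sinks contained in $\mathscr{E}_0$; taking their intersection $\mathscr{E}(g)$ yields, by the above observation applied finitely many times, an Engel sink contained in $\mathscr{E}_0$ (as $\mathscr{E}_0$ is itself among them). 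For any Engel sink $\mathscr{E}'$ of $g$ whatsoever, the set $\mathscr{E}'\cap\mathscr{E}_0$ is an Engel sink contained in $\mathscr{E}_0$, hence contains $\mathscr{E}(g)$; thus $\mathscr{E}(g)\subseteq\mathscr{E}'$, and $\mathscr{E}(g)$ is the unique smallest Engel sink of $g$.

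The second stage analyses the map $\psi\colon s\mapsto[s,g]$ applied to elements of this smallest sink, and this is where the main work lies. I would first show that $\psi(\mathscr{E}(g))$ is itself an Engel sink of $g$: given $x\in G$, for all $n$ exceeding $n(x,g)$ the element $[x,\,{}_{n}g]$ lies in $\mathscr{E}(g)$, whence $[x,\,{}_{n+1}g]=\psi\bigl([x,\,{}_{n}g]\bigr)\in\psi(\mathscr{E}(g))$; so $\psi(\mathscr{E}(g))$ absorbs all sufficiently long commutators. By minimality of $\mathscr{E}(g)$ we conclude $\mathscr{E}(g)\subseteq\psi(\mathscr{E}(g))$. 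On the other hand $\psi(\mathscr{E}(g))$ is the image of the finite set $\mathscr{E}(g)$, so $|\psi(\mathscr{E}(g))|\le|\mathscr{E}(g)|$; combined with the reverse inclusion this forces $\psi(\mathscr{E}(g))=\mathscr{E}(g)$. Hence $\psi$ restricts to a surjection, and therefore a bijection, of the finite set $\mathscr{E}(g)$ onto itself.

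Finally, since $\psi$ permutes the finite set $\mathscr{E}(g)$, every element is periodic: for each $s\in\mathscr{E}(g)$ the orbit $s,\psi(s),\psi^2(s),\dots$ is finite and returns to $s$, so there is $k\in\mathbb{N}$ with $\psi^k(s)=s$. Since $\psi^k(s)=[s,\,{}_{k}g]$ by the definition of the iterated commutator, this gives $s=[s,\,{}_{k}g]$, as required (and one could even take $k$ to be the order of the permutation, uniform over all $s$). The main obstacle is precisely the surjectivity of $\psi$ on $\mathscr{E}(g)$: it is the interplay between the minimality of the sink, which forces $\mathscr{E}(g)\subseteq\psi(\mathscr{E}(g))$, and its finiteness, which then forces equality, that drives the argument. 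Without finiteness the map $\psi$ could be injective but not surjective, and the periodicity conclusion would fail.
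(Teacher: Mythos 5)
Your proposal is correct and follows essentially the same route the paper sketches in the discussion preceding the lemma: closure of Engel sinks under intersection gives a smallest sink when a finite one exists, and the surjectivity (hence bijectivity) of $s\mapsto[s,g]$ on the finite smallest sink yields the periodicity $s=[s,\,{}_kg]$. You have merely filled in the details --- in particular the observation that $\psi(\mathscr E(g))$ is itself an Engel sink, which forces the surjectivity via minimality and finiteness --- exactly as intended.
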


We now consider countable Engel sinks in compact groups.

\begin{lemma}\label{l1}
Suppose that an element $g$ of a compact
group $G$ has a countable Engel sink $\{s_1,s_2,\dots\}$.  Then there are positive integers $i,j$ and an open set $U$  such that
$$
[u,\,{}_ig]=s_j \qquad \text{for all}\quad u\in U.
$$
If in addition $G$ is a profinite group, then there are  positive integers $i,j$ and a coset $Nb$ of an open normal subgroup $N$ such that
$$
[nb,\,{}_ig]=s_j \qquad \text{for all}\quad n\in N.
$$
\end{lemma}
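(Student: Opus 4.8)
The plan is to exploit the countability of the Engel sink together with the Baire Category Theorem (Theorem~\ref{bct}) to find a "large" set of elements on which the iterated commutator map stabilizes to a single sink element. The key observation is that for each $x\in G$, all sufficiently long commutators $[x,{}_n g]$ lie in the countable sink, so we may partition $G$ according to which sink element is eventually reached, and a Baire-type argument forces one piece to be topologically substantial.

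\begin{proof}
For each pair of positive integers $i,j$ define the set
$$
X_{i,j}=\{\,x\in G \mid [x,\,{}_ig]=s_j\,\}.
$$
Since the mapping $x\mapsto [x,\,{}_ig]$ is continuous and $\{s_j\}$ is a closed point in the Hausdorff group $G$, each $X_{i,j}$ is a closed subset of $G$. By the definition of the Engel sink, for every $x\in G$ there is an integer $n(x,g)$ with $[x,\,{}_ng]\in\{s_1,s_2,\dots\}$ for all $n\ge n(x,g)$. In particular, for every $x\in G$ there exist $i$ and $j$ with $[x,\,{}_ig]=s_j$, so that
$$
G=\bigcup_{i,j}X_{i,j}.
$$
This is a countable union of closed subsets of the compact group $G$. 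By Theorem~\ref{bct}, one of these subsets, say $X_{i,j}$, has non-empty interior. Setting $U$ to be its interior gives the open set with $[u,\,{}_ig]=s_j$ for all $u\in U$, proving the first assertion.

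For the profinite case, recall that a profinite group has a base of neighbourhoods of the identity consisting of open normal subgroups, and every open set is a union of cosets of such subgroups. Since the open set $U$ obtained above is non-empty, it contains a coset $Nb$ of some open normal subgroup $N$, where $b\in U$. Then $Nb\subseteq U$, and hence $[nb,\,{}_ig]=s_j$ for all $n\in N$, as required.
\end{proof}

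The main point to watch is that each $X_{i,j}$ is genuinely closed; this rests on the continuity of the commutator map in a topological group together with the fact that singletons are closed in a Hausdorff space. Once closedness is in hand, the Baire Category Theorem does all the work, and the passage to a coset in the profinite case is routine from the existence of a neighbourhood base of open normal subgroups.
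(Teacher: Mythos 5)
Your proof is correct and follows essentially the same route as the paper: the paper defines the same closed sets $S_{kl}=\{x\in G\mid [x,\,{}_kg]=s_l\}$, observes that they cover $G$ by the definition of the Engel sink, and applies the Baire Category Theorem to extract one with non-empty interior, then passes to a coset of an open normal subgroup in the profinite case. Your added remarks on why the sets are closed and why the open set contains such a coset are correct elaborations of steps the paper leaves implicit.
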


\begin{proof}
We define the sets
$$
S_{kl}=\{x\in G\mid [x,\,{}_kg]=s_l\}.
$$
Note that each $S_{kl}$ is a closed subset of $G$. Then
$$
G=\bigcup _{k,l}S_{kl}
$$
by the definition of the Engel sink. By
Theorem~\ref{bct} one of these sets $S_{ij} $ contains an open subset $U$, as required. In the case of a profinite group this open subset contains a   coset $Nb$ of an open normal subgroup $N$.
\end{proof}

For profinite groups we can derive the following consequence of Lemma~\ref{l1}.

\begin{lemma}\label{l-engk} Suppose that an element $g$ of a profinite group $G$ has a countable Engel sink. Then there are positive integers $i,k$ and a coset $Nb$ of an open normal subgroup $N$  such that
$$
[[nb,\,{}_ia],a^k]=1 \qquad \text{for all}\quad n\in N.
$$
\end{lemma}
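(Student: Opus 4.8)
The plan is to deduce the identity from the profinite case of Lemma~\ref{l1} by a short conjugation argument that exploits the finiteness of $G/N$. Throughout I write $g$ for the element of the hypothesis.

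First I would apply the second part of Lemma~\ref{l1} to $g$: enumerating the countable Engel sink as $\{s_1,s_2,\dots\}$, we obtain positive integers $i,j$ and a coset $Nb$ of an open normal subgroup $N$ with
\[
[nb,\,{}_ig]=s_j\qquad\text{for all }n\in N.
\]
Since this value is constant on the coset, the desired identity $[[nb,\,{}_ig],g^k]=1$ is equivalent to $[s_j,g^k]=1$, so it suffices to produce a single power of $g$ that centralizes $s_j$.

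The mechanism is as follows. As $N$ is open, $G/N$ is finite, so the image $gN$ has finite order; let $k\geq 1$ be this order, so that $g^k\in N$. Conjugating the displayed relation by $g^k$ and using that $g^k$ commutes with $g$ — whence $[x,\,{}_ig]^{g^k}=[x^{g^k},\,{}_ig]$ for every $x\in G$ — I would specialise to $x=b$ to get
\[
s_j^{g^k}=[b,\,{}_ig]^{g^k}=[b^{g^k},\,{}_ig].
\]
Because $g^k\in N$ and $N$ is normal in $G$, conjugation by $g^k$ acts as the identity on $G/N$; hence $b^{g^k}$ lies again in the coset $Nb$, say $b^{g^k}=n'b$ with $n'\in N$. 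Invoking the constancy on $Nb$ once more gives $[b^{g^k},\,{}_ig]=[n'b,\,{}_ig]=s_j$, so that $s_j^{g^k}=s_j$, i.e.\ $[s_j,g^k]=1$.

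Combining these, for every $n\in N$ we conclude $[[nb,\,{}_ig],g^k]=[s_j,g^k]=1$, which is the assertion for the integers $i,k$ and the coset $Nb$. The argument is essentially immediate once Lemma~\ref{l1} is in hand; the one point that must be handled correctly — and the crux of the proof — is that conjugation by $g^k$ commutes with the commutator operator $x\mapsto[x,g]$ while the condition $g^k\in N$ returns $b^{g^k}$ to the coset $Nb$, and it is precisely this that makes the constant sink value $s_j$ reappear after conjugation.
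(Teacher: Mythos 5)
Your proof is correct and follows essentially the same route as the paper: apply the profinite case of Lemma~\ref{l1} to get the constant value $s_j$ on the coset $Nb$, then choose a power $g^k$ fixing that coset under conjugation (the paper just says ``some power'', you take $k$ to be the order of $gN$, which works) and use $[x,\,{}_ig]^{g^k}=[x^{g^k},\,{}_ig]$ to conclude $s_j^{g^k}=s_j$. No gaps.
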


\begin{proof} Let
 $\{s_1,s_2,\dots\}$ be a countable Engel sink of $g$. By Lemma~\ref{l1}, there are  positive integers $i,j$ and a coset $Nb$ of an open normal subgroup $N$ such that
$$
[nb,\,{}_ia]=s_j\qquad \text{for all}\quad n\in N.
$$
Since $G/N$ is a finite group, the coset $Nb$ is invariant under conjugation by some power $a^k$. Then
$$
\begin{aligned}
s_j^{a^k}&=[b,\,{}_ia]^{a^{k}}=[b^{a^{k}},\,{}_ia]\\&=[nb,\,{}_ia]\quad \text{for some } n\in N\\
&=s_j.
\end{aligned}
$$
In other words, $a^{k}$ commutes with $s_j$, so that
\begin{equation*}
[[nb,\,{}_ia],a^{k}]=[s_j,a^k]=1\qquad \text{for all}\quad n\in N.\tag*{\qedhere}
\end{equation*}
\end{proof}

\begin{remark} If all Engel sinks in a group are at most countable, then this condition is inherited by every section of the group, and we shall use this property without special references. The same applies to a group in which all Engel sinks are finite.
\end{remark}

\section{Pronilpotent groups}

When $G$ is a pro-$p$ group, or more generally a pronilpotent group, the conclusion of the main Theorem~\ref{t1} is equivalent to $G$ being locally nilpotent, and this is what we prove in this section.

\begin{theorem}
\label{t2}
Suppose that $G$ is a pronilpotent group in which every element has a countable Engel sink. Then $G$ is locally nilpotent.
\end{theorem}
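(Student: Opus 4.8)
The plan is to reduce to the pro-$p$ case and then to run the Lie ring method. Since local nilpotency concerns only finitely generated subgroups, and every closed subgroup of $G$ is again pronilpotent and inherits the countable Engel sink hypothesis, it suffices to prove that every topologically finitely generated such group is nilpotent; so first I would assume $G$ to be topologically finitely generated. A pronilpotent group is the Cartesian product $G=\prod_p G_p$ of its Sylow subgroups $G_p$, each of which is a pro-$p$ group and a continuous image of $G$, hence inherits the hypothesis. For a finite generating set $X$ one checks that the closed subgroup it generates decomposes as $\overline{\langle X\rangle}=\prod_p\overline{\langle \pi_p(X)\rangle}$, where $\pi_p$ is the projection onto $G_p$; consequently $G$ is nilpotent precisely when the nilpotency classes of the $G_p$ are bounded independently of $p$. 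Everything thus comes down to two tasks: proving each $G_p$ nilpotent, and bounding its class uniformly in $p$.

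For the pro-$p$ case I would pass to the associated $\mathbb{F}_p$-Lie algebra $L=\bigoplus_i D_i/D_{i+1}$ built from the Zassenhaus filtration, in which a finite topological generating set of $G_p$ yields a finite generating set of $L$. The point of contact with the hypothesis is Lemma~\ref{l-engk}: for each $a\in G_p$ it furnishes an open normal subgroup $N$, a coset $Nb$, and integers $i,k$ with $[[nb,{}_ia],a^k]=1$ for all $n\in N$. The scheme of Wilson--Zelmanov \cite{wi-ze} then applies: this coset-Engel condition should force the image in $L$ of a suitable power of $a$ to be ad-nilpotent, and, applied to all generators and, crucially, to all their Lie products, it should show that $L$ is generated by elements whose products are all ad-nilpotent and that $L$ satisfies a polynomial identity. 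Zelmanov's theorem \cite{ze92,ze95,ze17} then yields that $L$ is nilpotent, and translating back gives that $G_p$ is nilpotent.

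The uniformity in $p$ would come from keeping the relevant parameters — the number of generators, the ad-nilpotency degree, and the index data extracted from Lemma~\ref{l-engk} — under control independently of $p$. Since Zelmanov's bound depends only on the number of generators and the ad-nilpotency degree, a uniform bound on the latter produces a uniform class bound, and hence nilpotency of $\prod_p G_p$ and of $G$.

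The main obstacle is the pro-$p$ step, and within it the passage from the countable-sink hypothesis to genuine ad-nilpotency in $L$. A \emph{finite} Engel sink forces true recurrence $s=[s,{}_kg]$ (Lemma~\ref{l-min}) and, in the pro-$p$ setting, true Engel elements; a \emph{countable} sink yields only the much weaker coset condition of Lemma~\ref{l-engk}, in which $[x,{}_ia]$ is merely known to commute with a power of $a$ on an open coset. Extracting from this a clean ad-nilpotency statement for the generators and their products — with bounds uniform enough to survive the reassembly of the Sylow factors — is where I expect the substantial work to lie, and it is presumably what forces the appeal to Zelmanov's theorem rather than the elementary argument that suffices in the finite-sink case.
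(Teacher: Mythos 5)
There is a genuine gap, and it sits exactly where your plan differs structurally from the paper: the reassembly of the Sylow factors. Your reduction makes the theorem equivalent to showing that the nilpotency classes of the $G_p$ (or of the closed subgroups $\overline{\langle\pi_p(X)\rangle}$) are bounded independently of $p$, and you propose to get this by keeping the parameters extracted from Lemma~\ref{l-engk} ``under control independently of $p$.'' No such control is available: the integers $i,k$ and the open subgroup $N$ are produced by a Baire category argument (Theorem~\ref{bct}) applied separately inside each group, and nothing in the countable-sink hypothesis bounds them uniformly over infinitely many primes; moreover the polynomial identity satisfied by $L_p(G_p)$ comes from a coset identity whose word and index data also vary with $p$, and any quantitative form of Theorem~\ref{tz} depends on that identity as well, not only on the number of generators and the ad-nilpotency indices. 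The paper avoids the uniformity problem altogether: it does not try to bound classes but proves directly that $G$ is an Engel group. A single application of Lemma~\ref{l-engk} to a pair $(g,a)$ inside the whole group $G$ gives $i,k$ and a subgroup $N$ of finite index $l$; since $N$ contains $G_q$ for every $q\notin\pi(l)$ and $b$ may be chosen a $\pi(l)$-element, one gets $[[g_q,\,{}_ia_q],a_q^k]=1$, and as $\langle a_q^k\rangle=\langle a_q\rangle$ for $q\notin\pi(k)$ this yields $[g_q,\,{}_{i+1}a_q]=1$ for all but the finitely many primes in $\pi(l)\cup\pi(k)$. For those finitely many primes, local nilpotency of $G_p$ (Proposition~\ref{pr-pro-p}) supplies an Engel exponent, and taking the maximum gives $[g,\,{}_ma]=1$; Theorem~\ref{t-wz} then finishes. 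So the uniform-in-$p$ bound your argument needs is never established and is in fact never needed.

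A second gap is hidden in ``translating back gives that $G_p$ is nilpotent.'' Nilpotency of $L_p(P)$ does not directly yield nilpotency of the finitely generated pro-$p$ group $P$. The paper's route is: nilpotency of $L_p(P)$ implies $P$ is $p$-adic analytic (Lazard); a $p$-adic analytic group satisfying a coset identity is soluble (Breuillard--Gelander); and then an induction on the derived length, which uses the countable-sink hypothesis a second time via Lemma~\ref{l1} on $U\langle a\rangle$ with $U$ abelian normal to show that $P$ is an Engel group, after which Theorem~\ref{t-wz} gives nilpotency. (The ad-nilpotency step you flag as the main obstacle is indeed carried out in the paper, via the collection identity for $[[xy,\,{}_iz],t]$ together with the degree estimates $u^p\in P_{p\delta(u)}$ and $[x,\bar{u^p}]=[x,\,{}_p\bar u]$, so your instinct about where the work lies is right; but the subsequent passage from the Lie algebra back to the group is an additional substantive piece that your sketch omits.)
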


The bulk of the proof is about the case where $G$ is a pro-$p$ group. First we remind the reader of important Lie ring methods in the theory of pro-$p$ groups.

For a prime number $p $, the \textit{Zassenhaus $p $-filtration} of a group $G$ (also called the \textit{$p $-dimension series}) is defined by
$$
G_i=\langle g^{p ^k}\mid g\in \gamma _j(G),\;\, jp ^k\geqslant i\rangle \qquad\text{for}\quad i=1,2,\dots
$$
This is indeed a \textit{filtration} (or an \textit{$N$-series}, or a \textit{strongly central series}) in the sense that
\begin{equation}\label{e-fil}
[G_i,G_j] \leqslant G_{i+j}\qquad \text{for all}\quad i, j.
 \end{equation}
 Then the Lie ring $D_p (G)$ is defined with the additive group
$$
D_p(G)=\bigoplus _{i}G_i/G_{i+1},
$$
where the  factors $Q_i=G_i/G_{i+1}$ are additively written. The Lie product is defined on homogeneous elements $xG_{i+1}\in Q_i$, $yG_{j+1}\in Q_j$ via the group commutators by
$$
[xG_{i+1},\, yG_{j+1}] = [x, y]G_{i+j+1}\in Q_{i+j}
$$
and extended to arbitrary elements of $D_p(G)$ by linearity. Condition~\eqref{e-fil} ensures that this  product is well-defined, and group commutator identities imply that $D_p(G)$ with these operations is a Lie ring. Since all the factors $G_i/G_{i+1}$ have prime exponent~$p $, we can view $D_p(G)$ as a Lie algebra over the field of $p $ elements $\mathbb{F}_p $. We denote  by $L_p (G)$ the subalgebra generated by the first factor $G/G_2$. (Sometimes, the notation $L_p (G)$ is used for $D_p (G)$.)

A group $G$ is said to satisfy a \textit{coset identity} if there is a group word $w(x_1,\dots ,x_m)$ and cosets $a_1H,\dots ,a_mH$ of a subgroup $H\leqslant G$  such that $w(a_1h_1,\dots ,a_mh_m)=1$ for any $h_1,\dots ,h_m\in H$. We shall use the following  result of Wilson and Zelmanov \cite{wi-ze} about coset identities.

\begin{theorem}[{Wilson and Zelmanov \cite[Theorem~1]{wi-ze}}]\label{t-coset}
If a group $G$ satisfies a coset identity on cosets of a subgroup of finite index, then for every prime $p$ the Lie algebra $L_p (G)$ constructed with respect to the Zassenhaus $p $-filtration satisfies a polynomial identity.
\end{theorem}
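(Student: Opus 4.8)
The plan is to prove the statement by passing to associated graded structures: I would convert the group-theoretic coset identity into an associative polynomial identity on the graded group algebra, and then descend it to a Lie identity on $L_p(G)$. Fix a prime $p$ and write $A=\mathbb{F}_p[G]$ for the group algebra, with augmentation ideal $I$ and associated graded algebra $\mathrm{gr}_I(A)=\bigoplus_i I^i/I^{i+1}$. The two structural inputs I would invoke are, first, the Jennings--Quillen description of $\mathrm{gr}_I(A)$ as the restricted universal enveloping algebra of $D_p(G)$, under which the Zassenhaus subgroups are recovered as $G_i=G\cap(1+I^i)$ and the degree-one component $I/I^2$ is precisely the first factor $G/G_2$; and second, the classical fact that a Lie algebra over a field satisfies a polynomial identity whenever the associative algebra it generates does (an associative PI-algebra is Lie-PI under the commutator bracket, and Lie subalgebras inherit this). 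Since $L_p(G)$ is generated by $G/G_2=I/I^2$, it is a Lie subalgebra of $\mathrm{gr}_I(A)$, and the associative subalgebra it generates is exactly the subalgebra $R\subseteq\mathrm{gr}_I(A)$ generated by $I/I^2$. Thus it suffices to show that $R$ satisfies an associative polynomial identity.

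First I would clean up the subgroup. Because the coset identity on cosets of $H$ is inherited by cosets of any smaller subgroup, I may replace $H$ by its normal core and assume $H\trianglelefteq G$ with $[G:H]=d<\infty$. Writing the fixed word as $w(x_1,\dots,x_m)$ and setting $v(x_1,\dots,x_m)=w(a_1x_1,\dots,a_mx_m)$, the hypothesis becomes the generalized identity $v(h_1,\dots,h_m)=1$ for all $h_1,\dots,h_m\in H$, in which the fixed elements $a_1,\dots,a_m\in G$ serve as constants. Only finitely many group elements occur among the $a_i$ and their partial products, so their images in $\mathrm{gr}_I(A)$ span a finite-dimensional space of coefficients; this finiteness, coming from $[G:H]<\infty$, is what keeps the resulting identity honest. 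I would also record that the image of $H$ in the elementary abelian quotient $G/G_2=I/I^2$ has finite codimension, since $G/G_2$ is a $p$-group on which $H$ has $p$-power index; hence the elements $h-1$ with $h\in H$ already span a subspace of finite codimension in $I/I^2$.

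The heart of the argument is to extract a nontrivial associative identity in $\mathrm{gr}_I(A)$ from the relation $v(h_1,\dots,h_m)=1$. Writing $h_i=1+(h_i-1)$ with $h_i-1\in I$ and expanding $v-1$ as a noncommutative series in the quantities $h_i-1$, with coefficients assembled from the constants $a_i$, I would apply finite differences in each variable over $H$ to annihilate the terms of lower multidegree and isolate the multilinear component, the part linear in each $h_i-1$. Reading this vanishing multilinear component in the lowest filtration degree where it does not vanish identically yields a nonzero multilinear element $f$ of $R$ that dies under all substitutions of degree-one generators of the form $h-1$, $h\in H$. Since the images of such $h-1$ fill a subspace of finite codimension in $I/I^2$, this says that $f$ is an associative identity for $R$ holding on a subspace of generators of finite codimension.

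Two points carry the real weight, and I expect them to be the main obstacles. The first is to guarantee that the isolated multilinear leading form does not vanish for formal reasons, i.e.\ that the minimal reading degree exists and the component survives; this is exactly where working over $\mathbb{F}_p$ with the Zassenhaus $p$-filtration, rather than with a naive logarithm that would demand division by $p$, is indispensable, and where the restricted structure of $D_p(G)$ and the interaction of the possibly high-degree constants $a_i$ with the degree-one generators $h_i-1$ must be controlled, the finiteness of $[G:H]$ being what bounds that interaction. The second, and principal, obstacle is upgrading the statement ``$f$ vanishes on a finite-codimension subspace of generators'' to a genuine polynomial identity on all of $L_p(G)$: here one treats the finitely many generators complementary to the image of $H$ as additional constants, obtaining a generalized polynomial identity with a finite-dimensional coefficient space, and then invokes the PI-theoretic fact that such a relation forces $R$, and hence the Lie subalgebra $L_p(G)\subseteq R^{(-)}$, to satisfy an ordinary polynomial identity. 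Managing the bookkeeping of constants and reading degrees, together with this final passage from a generalized to an ordinary identity, is the technically demanding core of the theorem.
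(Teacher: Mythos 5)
A preliminary remark on the comparison you ask for: the paper does not prove this statement at all --- it is quoted as Theorem~1 of Wilson and Zelmanov \cite{wi-ze} and used as a black box --- so the only meaningful benchmark is the original proof in \cite{wi-ze}. Measured against that, your skeleton is essentially the right one: Wilson and Zelmanov do work in $\mathbb{F}_p[G]$ with the augmentation ideal $I$, use the Jennings--Quillen identification of $\mathrm{gr}\,\mathbb{F}_p[G]$ with the restricted enveloping algebra of $D_p(G)$ together with $G_i=G\cap(1+I^i)$, expand $w(a_1h_1,\dots,a_mh_m)-1$ in the quantities $h_i-1$, and exploit the fact that the image of $\{h-1\mid h\in H\}$ spans a subspace of finite codimension in $I/I^2$, as you propose.

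However, the two ``obstacles'' you flag and leave open are not bookkeeping; they are the entire content of the theorem, and the patch you propose for the second one is unsound. There is no ``PI-theoretic fact'' that a generalized polynomial identity with finite-dimensional coefficient space forces an ordinary polynomial identity: by Amitsur's theorem on GPIs, $\operatorname{End}_k(V)$ with $\dim V$ infinite satisfies the nontrivial GPI $exeye-eyexe=0$ for a rank-one idempotent $e$, yet satisfies no polynomial identity, so this step cannot be outsourced to general PI theory. What rescues the argument in the present situation is a feature your write-up does not use: in $\mathrm{gr}\,\mathbb{F}_p[G]$ the degree-zero component is $\mathbb{F}_p$ and every group element has leading term the scalar $1$, so the constants $a_i$ disappear entirely from the lowest-degree form of the linearized relation; one thus obtains a \emph{constant-free} multilinear identity valid on homogeneous elements coming from a subspace of finite codimension, which is then extended to all of $L_p(G)$ by alternating in sufficiently many additional variables (a Capelli-style argument), not by a GPI-to-PI principle. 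Your first obstacle is equally real and your finite-difference mechanism does not resolve it: the multilinear component can genuinely vanish --- for $w=x^p$ one has $(1+z)^p-1=z^p$ over $\mathbb{F}_p$, whose multilinear part is $pz=0$ --- so one must instead take the minimal nonzero multihomogeneous component of $w(1+z_1,\dots,1+z_m)-1$, whose nonvanishing is precisely the faithfulness of the Magnus embedding of the free group, and linearize it not by scalar differences (the field $\mathbb{F}_p$ is too small for a Vandermonde argument) but by group substitutions $h=h'h''$ inside $H$, via $h'h''-1=(h'-1)+(h''-1)+(h'-1)(h''-1)$. As it stands, then, your proposal is a faithful reconstruction of the strategy of \cite{wi-ze} but not a proof: both decisive steps are missing, and the general principle you intend to invoke for the second is false.
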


 Theorem~\ref{t-coset} was used in the proof of the above-mentioned theorem on profinite Engel groups, which we state here for convenience.

\begin{theorem}[{Wilson and Zelmanov \cite[Theorem~5]{wi-ze}}]\label{t-wz}
Every profinite Engel group is locally nilpotent.
\end{theorem}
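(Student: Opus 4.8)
The plan is to prove the theorem through the Lie-algebra method, reducing it to Zelmanov's theorem on PI Lie algebras generated by ad-nilpotent elements. First I would reduce to the case of a topologically finitely generated group. Since every closed subgroup of an Engel group is again Engel, and a finitely generated abstract subgroup of $G$ is dense in a topologically finitely generated closed subgroup $K$, it is enough to prove that every such $K$ is nilpotent: a dense nilpotent subgroup forces $K$ itself to satisfy a nilpotency law, by continuity of the iterated commutator maps. So assume $G$ is topologically finitely generated. By Zorn's theorem every finite quotient of $G$, being a finite Engel group, is nilpotent; hence $G$ is pronilpotent and decomposes as the Cartesian product $G=\prod_{p}G_p$ of its Sylow pro-$p$ subgroups. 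Each $G_p$ is a finitely generated Engel pro-$p$ group, and $G$ is nilpotent precisely when the $G_p$ are nilpotent of class bounded independently of $p$. Thus everything reduces to bounding the nilpotency class of a finitely generated Engel pro-$p$ group by a constant not depending on $p$.

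Fix $p$ and form the Lie algebra $L=L_p(G_p)$ over $\mathbb{F}_p$ attached to the Zassenhaus $p$-filtration. Applying the Baire Category Theorem (Theorem~\ref{bct}) to the covering of $G\times G$ by the closed sets $\{(x,g):[x,\,{}_{n}g]=1\}$, I obtain a single integer $N$ and a coset identity $[x,\,{}_{N}g]=1$ holding whenever $x$ and $g$ lie in prescribed cosets of an open subgroup of finite index. By the Wilson--Zelmanov coset-identity theorem (Theorem~\ref{t-coset}), $L$ then satisfies a polynomial identity; since the filtration modulo $p$ only sees the pro-$p$ part, the index and $N$ are inherited from the fixed group $G$, so the degree of this identity is bounded independently of $p$.

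Next I would transfer the Engel condition to $L$. Each element $c\in G$ determines a homogeneous element of $L$ whose adjoint operator is nilpotent: applying the Baire/coset argument with $c$ in the role of $g$ gives a uniform Engel bound on a coset, which a further coset argument upgrades to nilpotency of $\operatorname{ad}$ of the image of $c$ on all of $L$. In particular, every Lie-product of the images of a finite generating set of $G_p$ is ad-nilpotent. Therefore $L$ is a PI Lie algebra generated by finitely many elements all of whose products are ad-nilpotent, and Zelmanov's theorem (quoted in the Introduction) applies to yield that $L$ is nilpotent; tracing the quantitative form, its class is bounded in terms of the number of generators, the degree of the identity, and the ad-nilpotency indices, all of which can be taken uniform in $p$. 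Being nilpotent and finitely generated, $L$ is finite-dimensional.

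The final step passes from $L$ back to the group. Finite-dimensionality of $L_p(G_p)$ means, by Lazard's theory, that $G_p$ is a $p$-adic analytic group, and hence linear over $\mathbb{Q}_p$. An Engel linear group is nilpotent, and here the dimension---and so the nilpotency class---is bounded by $\dim L$, uniformly in $p$. Consequently $G=\prod_p G_p$ is nilpotent, which gives the theorem. The main obstacle is Zelmanov's theorem, the deep structural input without which the PI Lie algebra could not be concluded nilpotent; the chief technical difficulty internal to the argument is the transfer of the pointwise, non-uniform group Engel condition into genuine (uniform) ad-nilpotency of the homogeneous elements of $L$.
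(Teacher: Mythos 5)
First, a framing remark: the paper does not prove Theorem~\ref{t-wz} at all --- it is quoted from Wilson and Zelmanov \cite{wi-ze} --- so your attempt has to be measured against the original argument, whose overall architecture (Baire category $\to$ coset identity $\to$ Theorem~\ref{t-coset} $\to$ Theorem~\ref{tz} $\to$ Lazard and linearity) you have reproduced correctly in outline.

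There is, however, a genuine gap at the one point where the whole proof is decided: the uniformity in $p$. Your reduction is exactly right --- $G$ may be assumed topologically finitely generated, is pronilpotent by Zorn's theorem on finite Engel groups, and $G=\prod_p G_p$ is nilpotent if and only if the classes of the $G_p$ are bounded independently of $p$. But your mechanism for producing that bound does not work. The Baire category argument, applied with a fixed element $c$ in the role of $g$ (or applied inside a fixed $G_p$), yields an Engel bound only on a coset of some open subgroup, and both the Engel length and the index of that subgroup depend on $c$ and on $p$; the resulting ad-nilpotency index of $\bar c$ (as in the paper's Lemma~\ref{l-ad}, it involves the index $p^m$ of the open subgroup) is therefore element- and prime-dependent. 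The quantitative form of Theorem~\ref{tz} requires a \emph{single} ad-nilpotency bound valid for \emph{all} the infinitely many commutators in the generators, so your assertion that the indices ``can be taken uniform in $p$'' is unsupported, and with it the uniform bound on the class of $L_p(G_p)$, on $\dim L_p(G_p)$, and on the class of $G_p$ all fall; the final assembly of $G=\prod_p G_p$ then collapses.

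The missing idea is to use pronilpotency to upgrade the single coset identity to a genuine group identity on almost all Sylow subgroups. Suppose $[x,\,{}_n y]=1$ for all $x\in aH$, $y\in bH$ with $H$ open normal. For every prime $p\nmid |G:H|$ we have $G_p\le H$; writing $a=a_pa_{p'}$ and $b=b_pb_{p'}$ in $G=G_p\times G_{p'}$ and substituting $x=au$, $y=bv$ with $u,v\in G_p$, projection to the $p$-component gives $[a_pu,\,{}_n b_pv]=1$ for all $u,v\in G_p$; since $a_pG_p=G_p=b_pG_p$, this says $G_p$ is an $n$-Engel group with $n$ independent of $p$. Now every homogeneous element of $L_p(G_p)$ is ad-nilpotent of index at most $n$, the linearized $n$-Engel identity is a polynomial identity of degree independent of $p$, and the quantitative Theorem~\ref{tz} bounds the class (hence the dimension) of $L_p(G_p)$ in terms of $n$ and the number of generators only; at that point your own Lazard/linearity step does deliver a class bound for $G_p$ uniform in $p$. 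The finitely many exceptional primes dividing $|G:H|$ are then handled one at a time by your qualitative per-element argument, where no uniformity is needed. Without this conversion of the coset identity into an identity holding on all of $G_p$ (or some equivalent device), the proof as written does not go through.
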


The proof of Theorem~\ref{t-wz} was based on the following deep result of Zelmanov \cite{ze92,ze95,ze17}, which is also used in our paper.

\begin{theorem}[{Zelmanov \cite{ze92,ze95,ze17}}]\label{tz}
 Let $L$ be a Lie algebra over a field and suppose that $L$ satisfies
a polynomial identity. If $L$ can be generated by a finite set $X$ such that every
commutator in elements of $X$ is ad-nilpotent, then $L$ is nilpotent.
\end{theorem}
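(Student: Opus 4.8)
The plan is to derive nilpotency from the \emph{sandwich method} of Kostrikin and Zelmanov: the strategy is to extract from the hypotheses (ad-nilpotent commutators together with the polynomial identity) a sufficient supply of \emph{sandwich} elements, and then invoke a structural nilpotency theorem for algebras generated by sandwiches. As a harmless first reduction I would pass to a convenient ground field. Nilpotency is insensitive to extension of scalars, since the lower central series commutes with $-\otimes_k K$, and the polynomial identity is inherited by $L\otimes_k K$; hence I may assume the field is infinite, indeed algebraically closed. This is not cosmetic: it guarantees that the multilinear \emph{linearizations} of the relations $(\operatorname{ad} c)^{n(c)}=0$ are genuine consequences inside $L$ rather than vanishing identically modulo the characteristic.

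Recall that an element $c\in L$ is called a \emph{sandwich} if $(\operatorname{ad} c)^2=0$ and $\operatorname{ad}(c)\operatorname{ad}(x)\operatorname{ad}(c)=0$ for all $x\in L$. The decisive structural input is the Kostrikin--Zelmanov theorem on sandwich algebras: a Lie algebra over a field generated by finitely many sandwiches is nilpotent, of class bounded by a function of the number of generators alone. Crucially, this holds over an arbitrary field and needs no polynomial identity; it is a purely combinatorial statement about the Lie-algebra structure generated by thin elements.

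The heart of the argument is the manufacture of sandwiches from the given data, and it is here that the polynomial identity and the ad-nilpotency of \emph{every} commutator in $X$ (not merely of the generators themselves) are used in an essential way. Using the PI one controls the combinatorics of long commutators in the $x_i$: via the theory of PI Lie algebras and Shirshov-type height arguments one obtains that the algebra generated by $X$ has bounded essential height, so that every element is a bounded linear combination of reduced commutators in the generators. Feeding the linearized ad-nilpotency relations into this reduced-commutator description, one shows — and this is the deep content of Zelmanov's work — that sufficiently long reduced commutators become sandwiches, or at least that every nonzero homomorphic image of $L$ contains a nonzero sandwich. An induction then closes the loop: the perpetual presence of a sandwich ideal, combined with the sandwich-algebra nilpotency theorem, forces $L$ itself to be nilpotent.

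The main obstacle is precisely the sandwich-production step in positive characteristic, compounded by the fact that the ad-nilpotency indices $n(c)$ are \emph{not} assumed uniformly bounded. In characteristic zero the ad-nilpotency together with the PI reduces, via Engel's theorem and Jacobson/Kaplansky-type PI structure theory, fairly directly to nilpotency; but over a field of characteristic $p$ the naive linearization of $(\operatorname{ad} c)^{p}=0$ degenerates, and it must be replaced by Zelmanov's intricate machinery for extracting thin sandwiches from partial linearizations under the constraint supplied by the identity. Establishing both that extraction and the Kostrikin--Zelmanov sandwich theorem on which it leans is where essentially all the difficulty resides — these are exactly the results of \cite{ze92,ze95,ze17} — whereas the field reduction and the inductive packaging above are routine by comparison.
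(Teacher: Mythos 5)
There is nothing in the paper to compare your argument with: Theorem~\ref{tz} is not proved in the paper at all --- it is quoted verbatim from Zelmanov \cite{ze92,ze95,ze17} and used as a black box (it enters only through Lemmas~\ref{l-ad} and~\ref{l-PI} in the proof of Proposition~\ref{pr-pro-p}). Judged as a blind proof attempt, your text has a gap of the most fundamental kind: the two pillars on which it rests --- the extraction of nonzero sandwiches (in every nonzero homomorphic image) from the polynomial identity together with the ad-nilpotency of all commutators in the generators, with \emph{unbounded} nilpotency indices and in characteristic $p$, and the Kostrikin--Zelmanov theorem that a Lie algebra generated by finitely many sandwiches is nilpotent --- are precisely the content of the papers being cited, and your final paragraph concedes as much. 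So what you have written is an accurate roadmap of Zelmanov's proof, not a proof: beyond the routine scalar-extension reduction (which is fine, granted the standard multilinearization of the identity so that it survives $-\otimes_k K$), nothing is actually established.

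Even as a roadmap, the phrase ``an induction then closes the loop'' conceals a genuine difficulty that you should at least name. From ``every nonzero quotient of $L$ contains a nonzero sandwich'' one cannot directly apply the sandwich-algebra theorem: the ideal of $L$ generated by its sandwiches is not obviously generated by sandwiches \emph{as a subalgebra} (a commutator $[s,x]$ of a sandwich $s$ with an arbitrary element need not be a sandwich), so its local nilpotency is not immediate; one needs the theory of the Kostrikin radical, a transfinite iteration of quotients, and a termination argument, and even then one must separately convert ``$L$ coincides with its Kostrikin radical and is finitely generated'' into nilpotency --- an extension of locally nilpotent ideals is not automatically locally nilpotent, so finite generation alone does not finish the job. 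All of this is supplied in the cited works, but as stated your induction does not close. One point in your favour worth keeping: your definition of sandwich correctly includes the condition $\operatorname{ad}(c)\operatorname{ad}(x)\operatorname{ad}(c)=0$ as a separate axiom, which is genuinely needed in characteristic $2$ (in characteristic different from $2$ it follows from $(\operatorname{ad} c)^2=0$), and your emphasis that the unboundedness of the indices $n(c)$ and the degeneration of linearizations in characteristic $p$ are the real obstacles is exactly right --- but identifying the obstacles is not the same as overcoming them.
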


We now consider pro-$p$ groups with countable Engel sinks.

\begin{proposition}
\label{pr-pro-p}
Suppose that $P$ is a finitely generated pro-$p$ group in which every element  has a countable Engel sink. Then $P$ is nilpotent.
\end{proposition}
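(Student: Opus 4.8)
The plan is to run the Lie-ring method of Zelmanov, following the Wilson--Zelmanov treatment of Engel groups, with the countable-sink hypothesis supplying the two inputs that this method needs: a polynomial identity for the Lie algebra $L=L_p(P)$ and an ad-nilpotency condition on its generators. Fix a finite set of topological generators of $P$. For each element $g$ of $P$, Lemma~\ref{l-engk} provides positive integers $i,k$ and a coset $Nb$ of an open normal subgroup $N$ with $[[nb,\,{}_ig],g^{k}]=1$ for all $n\in N$, and since $P/N$ is a finite $p$-group the conjugating exponent may be taken to be a power of $p$, say $k=p^{a}$. First I would combine such relations into a coset identity for $P$ on cosets of a subgroup of finite index and then apply Theorem~\ref{t-coset}, concluding that $L=L_p(P)$ satisfies a polynomial identity. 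One point needing care is that the relation produced by Lemma~\ref{l-engk} treats $g$ as a fixed element, whereas a coset identity requires every variable to range over a coset; bridging this gap, by using that every element of $P$ and not merely $g$ has a countable sink, is part of the work.

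The next step is to obtain the ad-nilpotency condition. The relation $[[nb,\,{}_ig],g^{p^{a}}]=1$ means that $i$-fold commutation with $g$ sends the coset $Nb$ into the centralizer $C=C_P(g^{p^{a}})$, a $g$-invariant closed subgroup on which conjugation by $g$ has order dividing $p^{a}$ and therefore acts unipotently. Passing to the graded Lie algebra, I expect this to force $\mathrm{ad}\,\bar g$ to be nilpotent on $L$. Doing this for the finite generating set, and for the finitely many Lie commutators in the corresponding elements of $L$, one would check that $L$ is generated by a finite set all of whose commutators are ad-nilpotent, so that Zelmanov's Theorem~\ref{tz} applies and $L=L_p(P)$ is nilpotent.

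Finally I would transfer nilpotency from $L_p(P)$ back to $P$, and this is the delicate step. Nilpotency of $L_p(P)$ by itself does not make $P$ nilpotent: for odd $p$ the metabelian pro-$p$ group $\mathbb{Z}_p\rtimes\mathbb{Z}_p$, whose acting factor operates by $x\mapsto x^{1+p}$, has abelian $L_p$ yet is not nilpotent. Hence the sink hypothesis must enter once more, the goal being to deduce that $P$ is an Engel group; once that is established the Wilson--Zelmanov Theorem~\ref{t-wz} shows $P$ locally nilpotent, and since $P$ is finitely generated it is then nilpotent. I expect this transfer, together with the production of the ad-nilpotency condition above, to be the main obstacle: for \emph{finite} sinks each sink is trivialized immediately by Lemma~\ref{l-min}, because in a pro-$p$ group $s=[s,\,{}_kg]$ forces $s=1$, whereas for merely countable sinks one has to work throughout with the coset identities of Lemma~\ref{l-engk} and with the unipotent action of $g$ on the centralizers $C_P(g^{p^{a}})$.
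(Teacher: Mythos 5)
Your first half matches the paper's strategy: the coset identity is obtained exactly as you suspect it must be, by covering $P$ with the countably many closed sets $T_{C,i,k}=\{x\in P\mid [[g,\,{}_ix],x^{p^k}]=1 \text{ for all }g\in C\}$ indexed by the countable family of cosets $C$ of open normal subgroups (countable because $P$ is a finitely generated pro-$p$ group) and applying the Baire category theorem, after which Theorem~\ref{t-coset} gives the polynomial identity for $L_p(P)$. But your route to ad-nilpotency is not carried out and, as sketched, does not close: the relation $[[nb,\,{}_ig],g^{p^a}]=1$ constrains $[x,\,{}_ig]$ only for $x$ in the single coset $Nb$, whereas ad-nilpotency of $\bar g$ requires $[\bar x,\,{}_n\bar g]=0$ for \emph{every} homogeneous $\bar x$; your appeal to the unipotent action of $g$ on $C_P(g^{p^a})$ says nothing about elements outside $Nb$. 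The paper extends the relation from $Nb$ to all of $P$ via the expansion $[[xy,\,{}_iz],t]=[[x,\,{}_iz],t]\cdot[[y,\,{}_iz],t]\cdot v(x,y,z,t)$ with $v$ of weight at least $i+3$, the observation that $[g,\,{}_ma]\in N$ when $|P/N|=p^m$, and degree counting in the Zassenhaus filtration using $u^p\in P_{p\delta(u)}$ and $[x,\overline{u^p}]=[x,\,{}_p\bar u]$. (Minor point: there are infinitely many Lie commutators in a finite generating set; the paper sidesteps this by proving every homogeneous element of $L_p(P)$ is ad-nilpotent.)

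The serious gap is the final transfer, which you explicitly leave open, and which is precisely where the new input lies. The paper's resolution: nilpotency of $L_p(P)$ makes the finitely generated pro-$p$ group $P$ a $p$-adic analytic group (Lazard \cite{laz}; see also \cite[Corollary~D]{sha}); the coset identity already established, combined with the Breuillard--Gelander theorem \cite[Theorem~8.3]{br-ge}, then forces $P$ to be \emph{soluble}; and solubility permits an induction on derived length, reducing to $P=U\langle a\rangle$ with $U$ abelian normal and $P/U$ nilpotent. There Lemma~\ref{l1} gives $[nb,\,{}_ia]=s$ on a coset $Nb$ with $b$ taken in $U$, so that $s=[mb,\,{}_ia]=[m,\,{}_ia]\cdot[b,\,{}_ia]=[m,\,{}_ia]\cdot s$ for $m\in U\cap N$, forcing $[m,\,{}_ia]=1$ on the finite-index subgroup $U\cap N$ and hence $a$ Engel; Theorem~\ref{t-wz} then finishes. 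Without the analyticity/solubility step there is no visible mechanism for converting the countable-sink hypothesis into the Engel condition, so as it stands your proposal identifies the obstacle correctly but does not overcome it.
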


\begin{proof}
We shall first prove that the Lie algebra $L_p(P)$ is nilpotent, using Theorem~\ref{tz}. The next two lemmas confirm that the hypotheses in Theorem~\ref{tz} are satisfied.

\begin{lemma}\label{l-ad}
The Lie algebra $L_p(P)$ is generated by finitely many elements all commutators in which are ad-nilpotent.
\end{lemma}

\begin{proof}
The image of the finite generating set  of $P$ in the first homogeneous component of the Lie algebra $L_p(P)$ is a finite set of generators of $L_p(P)$.  We claim that all commutators in these generators  are ad-nilpotent.
In fact, we prove that every homogeneous element $\bar a$ of $L_p(P)$
is ad-nilpotent. We may assume that $\bar a$ is the image of $a\in P$ in the corresponding factor of the Zassenhaus filtration $P_i/P_{i+1}$. We say that an element $g\in P_i\setminus P_{i+1}$  has \textit{degree}  $i$ with respect to this filtration.

By Lemma~\ref{l-engk} for any element $a\in P$ there are positive integers $i,s$ and  a coset $Nb$ of an open normal subgroup $N$ such that
$$
[[nb,\,{}_ia],a^{s}]=1\qquad \text{for all}\quad n\in N.
$$
Since $P$ is a pro-$p$ group, we can assume that $s$ is a power of $p$, so that
\begin{equation}\label{eq-ad}
[[nb,\,{}_ia],a^{p^k}]=1\qquad \text{for all}\quad n\in N.
\end{equation}

Slightly modifying the argument in \cite{wi-ze},  for generators $x,y,z,t$ of a free group we write
$$
[[xy,\,{}_i z],t]=[[x,\,{}_i z],t]\cdot [[y,\,{}_i z],t]\cdot v(x,y,z,t),
$$
where the word $v(x,y,z,t)$ is a product of commutators of weight at least $i + 3$, each of which involves $x$, $y$, $t$ and involves $z$ at least $i$ times. Substituting $x=n$, $y=b$, $z=a$, and $t=a^{p^k}$ and using \eqref{eq-ad} we obtain that
$$
[[n,\,{}_ia],a^{p^k}]=v(n,b,a,a^{p^k})^{-1}
\qquad \text{for all}\;\, n\in N.
$$
If $|P/N|=p^m$, then for any $g\in P$ we have $[g,\,{}_ma]\in N$, so that we also have
\begin{equation}\label{eq-ad3}
[[g,\,{}_{i+m}a],a^{p^k}]=v([g,\,{}_ma],b,a,a^{p^k})^{-1}.
\end{equation}
We claim that $\bar a$ is ad-nilpotent on $L_p(P)$ of index $i+m+p^k$.

Let $\delta  (u)$ denote the degree of an element $u \in P$ with respect to the Zassenhaus filtration.  It is known that
\begin{equation}\label{e-pd}
u^p\in P_{p\delta (u)}.
\end{equation}
  Furthermore, in $L_p(P)$ for the images of $u$ and $u^p$ in  $P_{\delta (u)}/P_{\delta (u)+1}$ and $P_{p\delta (u)}/P_{p\delta (u)+1}$, respectively, we have
\begin{equation}\label{e-pd2}
[x, \bar{u^p}]=[x,\,{}_p \bar{u}]
\end{equation}
(see, for example, \cite[Ch.~II, \S\,5, Exercise~10]{bou}).
By  \eqref{e-pd}  the degree of $v([g,\,{}_ma],b,a,a^{p^k})$ on the right of~\eqref{eq-ad3} is at least $\delta (b)+\delta (g)+(i+m+p^k)\delta (a)$,
   strictly greater than $d=\delta (g)+(i+m+p^k)\delta (a)$. This means that the image of the right-hand side of \eqref{eq-ad3}  in $P_d/P_{d+1}$ is trivial. At the same time, by \eqref{e-pd2} the image of the left-hand side of \eqref{eq-ad3}  in $P_d/P_{d+1}$ is equal to the image of $[g,\,{}_{i+m+p^k} a]$ in $P_d/P_{d+1}$, which is in turn equal to the  element    $[\bar g,\,{}_{i+m+p^k}\bar a]$ in $L_p(P)$.
 Thus, for the corresponding homogeneous elements of $L_p(P)$ we have
 $$
 [\bar g,\,{}_{i+m+p^k}\bar a]=0.
$$
 Since here $\bar g$ can be any homogeneous element, we obtain that $\bar a$ is ad-nilpotent of index $i+m+p^k$, as claimed.
\end{proof}

\begin{lemma}\label{l-PI}
The Lie algebra $L_p(P)$ satisfies a polynomial identity.
\end{lemma}

\begin{proof}
Let $\mathscr C$ be the family of all cosets $Nb$ where $N$ is a normal open subgroup of~$P$. Since $P$ is a finitely generated pro-$p$ group, the family  $\mathscr C$ is countable  \cite[Proposition~4.1.3]{wil}. For every $C\in \mathscr C$, let
$$
T_{C,i,k}=\{x\in P\mid [[g, \,{}_ix],x^{p^k}]=1 \text{ for all }g\in C\}.
$$
Note that the sets   $T_{C,i,k}$ are closed. By Lemma~\ref{l-engk} we have
$$
\bigcup _{c\in \mathscr C} T_{C,i,k}=P.
$$
Therefore by
Theorem~\ref{bct}  one of the $T_{C,i,k}$ contains an open subset. Thus, there are positive integers $i,k$ and cosets $Nb_1$, $Nb_2$   of an open normal subgroup $N$ such that
\begin{equation*}
[[y,\,{}_ix],x^{p^k}]=1 \qquad \text{for all}\;\,y\in Nb_1,\;\, x\in Nb_2.
\end{equation*}
Thus, $P$ satisfies  a coset identity and therefore the Lie algebra $L_p(P)$ satisfies a polynomial identity by Theorem~\ref{t-coset}.
\end{proof}

Now we can resume the proof of Proposition~\ref{pr-pro-p}. The two lemmas above together with Theorem~\ref{tz} show that  $L_p(P)$ is nilpotent.
The nilpotency of the Lie algebra $L_p (P)$  of the finitely generated pro-$p $ group $P$  implies that $P$ is a $p $-adic analytic group. This result goes back to Lazard~\cite{laz}; see also \cite[Corollary~D]{sha}.
Furthermore, by a theorem of Breuillard and Gelander \cite[Theorem~8.3]{br-ge}, a  $p $-adic analytic group satisfying a coset identity on cosets of a subgroup of finite index is soluble.

Thus, $P$ is soluble, and we prove that $P$ is nilpotent by induction on the derived length of $P$.  By induction hypothesis, $P$ has an abelian normal subgroup $U$ such that $P/U$ is  nilpotent. We aim to show that $P$ is an Engel group. Since $P/U$ is nilpotent, it is sufficient to show that every element $a\in P$ is an Engel element in the product $U\langle a\rangle$.

Applying Lemma~\ref{l1} to $U\langle a\rangle$ we obtain a coset $Nb$ of an open normal subgroup $N$ of $U\langle a\rangle$, a positive integer $i$, and some element $s$ of the Engel sink of $a$ such that
$$
[nb,\,{}_ia]=s\qquad \text{for all}\;\,n\in N.
$$
  Since $[a^iun,a]=[un,a]$ for any $u\in U$, $n\in N$, we can assume that $b\in U$. Then for any $m\in U\cap N$ we have
$$
s=[mb,\,{}_ia]=[m,\,{}_ia]\cdot [b,\,{}_ia]=[m,\,{}_ia]\cdot s,
$$
since $U$ is abelian. Hence, $[m,\,{}_ia]=1$ for any $m\in U\cap N$. Since $U\cap N$ has finite index in $U$ and $U\langle a\rangle$ is a pro-$p$ group, it follows that $a$ is an Engel element of  $U\langle a\rangle$.

Thus, $P$ is an Engel group and therefore, being a finitely generated pro-$p$ group,  $P$ is nilpotent by
Theorem~\ref{t-wz}.
\end{proof}

\begin{proof}[Proof of Theorem~\ref{t2}] By
Theorem~\ref{t-wz},  it is sufficient to prove that $G$ is an Engel group.
For each prime~$p$, let $G_p$ denote the Sylow $p$-subgroup of $G$, so that $G$ is a Cartesian product of the~$G_p$, since $G$ is pronilpotent. Given any two elements $a,g\in G$, we write $g=\prod _pg_p$ and $a=\prod _pa_p$,  where $a_p,g_p\in G_p$. Clearly, $[g_q,a_p]=1$ for $q\ne p$.

By Lemma~\ref{l-engk}, for the element $a\in G$ there are positive integers $i,k$ and a coset $Nb$ of an open normal subgroup $N$  such that
\begin{equation}\label{e-eng2}
[[nb,\,{}_ia],a^{k}]=1\qquad \text{for all}\quad n\in N.
\end{equation}
Let $l$ be the (finite) index of $N$ in $G$. Then $N$ contains all Sylow $q$-subgroups of $G$ for $q\not\in \pi (l)$. Hence we can choose $b$ to be a $\pi(l)$-element. Let $\pi=\pi (l)\cup \pi (k)$; note that $\pi$ is a finite set of primes.

We claim that
$$
[g_q,\,{}_{i+1}a_q]=1\qquad \text{for }q\not\in \pi.
$$
 Indeed, since $b$ commutes with elements of $G_q$ and $G_q\leq N$,  by \eqref{e-eng2} we have
\begin{equation}\label{eq-engq2}
\begin{aligned}
 1=[g_qb,\,{}_ia],a^{k}]
 & = [[g_q,\,{}_ia],a^{k}]\cdot  [[b,\,{}_ia],a^{k}]\\
 & =[[g_q,\,{}_ia],a^{k}]\\
 &= [[g_q,\,{}_ia_q],a_q^{k}].
 \end{aligned}
\end{equation}
 Thus, $a_q^{k}$ centralizes $[g_q,\,{}_ia_q]$. Since $k$ is coprime to $q$, we have $ \langle a_q^{k}\rangle =\langle a_q\rangle$. Therefore \eqref{eq-engq2} implies that $[[g_q,\,{}_ia_q],a_q]=1$, as claimed.

 For every prime $p$ the group $G_p$ is locally nilpotent by Proposition~\ref{pr-pro-p},  so there is $k_p$ such that $[g_p,\,{}_{k_p}a_p]=1$.  Now for $m=\max\{i+1, \max_{p\in \pi} \{k_p\}\}$ we have $[g_p,\,{}_{m}a_p]=1$ for all $p$, which means that  $[g,\,{}_{m}a]=1$.
 Thus, $G$ is an Engel group and therefore it is locally nilpotent by
 Theorem~\ref{t-wz}.
\end{proof}

\section{Coprime actions}

In this section,  first we list several profinite  analogues of the properties of coprime automorphisms of finite groups; we prove some of them in those cases where we could not find a convenient reference to the literature. Then we prove several lemmas on coprime automorphisms in relation to Engel sinks.

 If $\varphi$  is an automorphism of a finite group $H$ of coprime order, that is, such that $(|\varphi |,|H|)=1$, then  we say for brevity that $\varphi$ is a coprime automorphism of~$H$. This definition is extended to profinite groups as follows.
We say that $\varphi$ is a \textit{coprime automorphism}  of a profinite group $H$  meaning that a procyclic group $\langle\varphi\rangle$ faithfully acts on $H$ by continuous automorphisms   and $\pi (\langle \varphi\rangle)\cap \pi (H)=\varnothing$. Since the semidirect product $H\langle \varphi\rangle$ is also a profinite group, $\varphi$ is a coprime automorphism of $H$
 if and only if for every open normal $\varphi$-invariant subgroup $N$ of $H$ the automorphism (of finite order) induced by $\varphi$ on $H/N$ is a coprime automorphism.
The following folklore lemma follows from the Sylow theory for profinite groups and an analogue of the Schur--Zassenhaus theorem.

\begin{lemma}\label{l-inv}
If $\varphi$ is a coprime automorphism of a profinite group $G$, then for every prime $q\in \pi (G)$ there is a $\varphi$-invariant Sylow $q$-subgroup of $G$. If $G$ is in addition prosoluble, then for every subset $\sigma\subseteq \pi (G)$ there is a $\varphi$-invariant Hall $\sigma$-subgroup of~$G$.
\end{lemma}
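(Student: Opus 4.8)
The plan is to reduce to finite groups, prove the statement there by a Frattini argument combined with the Schur--Zassenhaus theorem, and then reassemble by an inverse-limit (compactness) argument.

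First I would record that the $\varphi$-invariant open normal subgroups of $G$ form a base of neighbourhoods of $1$. Indeed, given any open normal subgroup $N\leqslant G$, only finitely many open subgroups have index $|G:N|$, so the $\langle\varphi\rangle$-orbit of $N$ is finite and $N^*=\bigcap_k N^{\varphi^k}$ is a $\varphi$-invariant open normal subgroup contained in $N$; equivalently, the subgroups $M\cap G$ for $M$ open normal in the profinite group $G\langle\varphi\rangle$ are cofinal. For each such $N^*$, the finite quotient $G/N^*$ carries the induced coprime automorphism $\bar\varphi$, coprimality passing to the quotient by the very definition of a coprime automorphism recalled above.

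Next I would prove the finite case. Let $H$ be a finite group with coprime automorphism group $A=\langle\varphi\rangle$, and let $K=H\langle\varphi\rangle$, so that $H\trianglelefteq K$ and $|K:H|=|\varphi|$ is coprime to $|H|$. If $Q$ is a Sylow $q$-subgroup of $H$, then since $q\nmid|\varphi|$ it is also a Sylow $q$-subgroup of $K$; as all Sylow $q$-subgroups of $H$ are conjugate in $H$, the Frattini argument gives $K=H\,N_K(Q)$, whence $N_K(Q)/N_H(Q)\cong K/H\cong A$. Here $N_H(Q)$ is a normal subgroup of $N_K(Q)$ of order coprime to $|A|$, so by the Schur--Zassenhaus theorem it has a complement $A_1$ in $N_K(Q)$; counting orders shows $A_1$ is in fact a complement to $H$ in $K$. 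Since $A$ is soluble (being procyclic, in the finite quotient cyclic), the conjugacy part of Schur--Zassenhaus yields $A_1=A^x$ for some $x\in H$, and then $A$ normalizes the conjugate $Q^{x^{-1}}$, which is again a Sylow $q$-subgroup. The Hall statement is identical once $H$ is soluble: one replaces $Q$ by a Hall $\sigma$-subgroup and uses the conjugacy of Hall $\sigma$-subgroups (Hall's theorem) to run the same Frattini and Schur--Zassenhaus argument.

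Finally I would assemble the profinite conclusion. For each $\varphi$-invariant open normal $N^*$ let $\mathscr S_{N^*}$ be the nonempty finite set of $\bar\varphi$-invariant Sylow $q$-subgroups of $G/N^*$; the natural projections (images of Sylow subgroups are Sylow subgroups) make $\{\mathscr S_{N^*}\}$ an inverse system of nonempty finite sets, so its inverse limit is nonempty, and a compatible choice $(\bar Q_{N^*})$ determines a closed $\varphi$-invariant subgroup $Q=\bigcap_{N^*}\pi_{N^*}^{-1}(\bar Q_{N^*})$ whose image in each $G/N^*$ is $\bar Q_{N^*}$; hence $Q$ is a pro-$q$ Sylow $q$-subgroup of $G$. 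The prosoluble/Hall case follows from the same limit, using that every finite continuous quotient of a prosoluble group is soluble (so the finite Hall statement applies to each $G/N^*$). The step requiring the most care is the finite case, and within it the conjugacy of complements in Schur--Zassenhaus, which is precisely where the solubility of the cyclic group $\langle\varphi\rangle$ is needed; the profinite assembly afterwards is routine compactness.
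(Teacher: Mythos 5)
Your proof is correct, and its engine --- the Frattini argument combined with the conjugacy part of the Schur--Zassenhaus theorem applied inside the normalizer of a Sylow subgroup --- is exactly the one the paper uses. The difference is one of packaging: the paper runs the argument once, directly at the profinite level, citing the profinite analogues of the Frattini argument and of the Schur--Zassenhaus theorem from Wilson's book (and, for the Hall case, the conjugacy of Hall $\sigma$-subgroups of prosoluble groups from Ribes--Zalesskii), whereas you prove the finite statement from first principles and then recover the profinite one as an inverse limit of the nonempty finite sets of invariant Sylow (respectively Hall) subgroups of the finite quotients. Your route is more self-contained and only needs the finite versions of the classical theorems, at the cost of the standard bookkeeping of the limit argument (images of invariant Sylow subgroups are invariant Sylow subgroups, and a compatible system assembles to a Sylow subgroup of $G$); the paper's route is shorter given the references. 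One small caveat: your first justification that the $\varphi$-invariant open normal subgroups are cofinal --- ``only finitely many open subgroups have index $|G:N|$'' --- fails for profinite groups that are not finitely generated (an infinite Cartesian power of $C_p$ has infinitely many open subgroups of index $p$), but the alternative justification you give in the same sentence, via the intersections $M\cap G$ with $M$ open normal in $G\langle\varphi\rangle$, is correct and is all you need.
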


\begin{proof}
Let $Q$ be a Sylow $q$-subgroup of $G$. By the analogue of Frattini argument \cite[Proposition 2.2.3(c)]{wil}, $G\langle \varphi\rangle=GN_{G\langle \varphi\rangle}(Q)$. By the analogue of the Schur--Zassenhaus theorem \cite[Proposition 2.3.3]{wil} applied to $N_{G\langle \varphi\rangle}(Q)$ and $ N_{G}(Q)$, there is a subgroup $K\leq N_{G\langle \varphi\rangle}(Q)$ such that $N_{G\langle \varphi\rangle}(Q)=KN_{G}(Q)$ and $K\cong \langle \varphi\rangle$; furthermore, $K$ and $\langle \varphi\rangle$ are conjugate in $G\langle \varphi\rangle=GK$. Thus, $\langle \varphi\rangle=K^x\leq N_{G\langle \varphi\rangle}(Q^x)$ for some $x\in G$, so that $\varphi$ normalizes the Sylow $q$-subgroup $Q^{x}$.

When $G$ is prosoluble, an analogue of the Frattini argument also holds for Hall subgroups because these are conjugate \cite[Corollary 2.3.7]{rib-zal}, and then the above proof  works in exactly the same manner.
\end{proof}

The following lemma is a special case of \cite[Proposition~2.3.16]{rib-zal}

\begin{lemma} \label{l-cover}
If $\varphi $ is a coprime automorphism of a profinite group $G$ and $N$ is a closed normal subgroup of $G$, then every fixed point of $\varphi$ in $G/N$ is an image of a fixed point of $\varphi$ in $G$, that is, $C_{G/N}(\varphi)=C(\varphi )N/N$.
\end{lemma}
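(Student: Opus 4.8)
The only nontrivial inclusion is $C_{G/N}(\varphi)\subseteq C_G(\varphi)N/N$, since the reverse inclusion is immediate: the image in $G/N$ of a fixed point of $\varphi$ is again fixed. So the plan is to show that every $\varphi$-invariant coset $gN$ representing a class in $C_{G/N}(\varphi)$ actually contains a genuine fixed point of $\varphi$. I would carry this out inside the profinite group $G\langle\varphi\rangle$, reducing it to the conjugacy of complements, exactly in the spirit of the proof of Lemma~\ref{l-inv}. Put $H=N\langle\varphi\rangle$. As $N$ is $\varphi$-invariant, $N$ is a closed normal subgroup of $H$, and since $\pi(N)\subseteq\pi(G)$ is disjoint from $\pi(\langle\varphi\rangle)$, the subgroup $\langle\varphi\rangle$ is a complement to $N$ in $H$ of coprime Steinitz order. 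Thus $H$ is a coprime extension to which the profinite Schur--Zassenhaus theorem \cite[Proposition~2.3.3]{wil} applies; in particular any two complements to $N$ in $H$ are conjugate by an element of $N$ (note that $\langle\varphi\rangle$ is procyclic, hence prosoluble, so the conjugacy part is available).

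Next I would use the fixed coset to produce a second complement. If $gN\in C_{G/N}(\varphi)$, then $g^{-1}g^{\varphi}\in N$, and applying $\varphi^{-1}$ also $g^{-1}g^{\varphi^{-1}}\in N$. Conjugating $\langle\varphi\rangle$ by $g$ in $G\langle\varphi\rangle$, the generator maps to $g^{-1}\varphi g=(g^{-1}g^{\varphi^{-1}})\varphi\in N\langle\varphi\rangle=H$, so the whole closed subgroup $\langle\varphi\rangle^{g}$ lies in $H$ and, projecting isomorphically onto $H/N$, is again a complement to $N$. By the Schur--Zassenhaus conjugacy there is $n\in N$ with $\langle\varphi\rangle^{gn}=\langle\varphi\rangle$; that is, the element $gn\in gN$ normalizes $\langle\varphi\rangle$.

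It then remains to upgrade ``normalizes'' to ``centralizes''. For any $x\in G$ one computes in the semidirect product $G\langle\varphi\rangle$ that $x^{-1}\varphi x=(x^{-1}x^{\varphi^{-1}})\varphi$, an element whose $\langle\varphi\rangle$-component is $\varphi$ and whose $G$-component is $x^{-1}x^{\varphi^{-1}}$. Hence $x^{-1}\varphi x\in\langle\varphi\rangle$ forces $x^{-1}x^{\varphi^{-1}}=1$, i.e.\ $x\in C_G(\varphi)$. Taking $x=gn$ yields $gn\in C_G(\varphi)$, and since $gn\in gN$ we obtain $gN\in C_G(\varphi)N/N$, completing the nontrivial inclusion. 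I expect the only delicate point to be the verification of the Schur--Zassenhaus hypotheses in the profinite setting, namely the coprimality of $\pi(N)$ and $\pi(\langle\varphi\rangle)$ together with the prosolubility of a complement; these are precisely guaranteed by $\varphi$ being a coprime automorphism. If one prefers to avoid invoking conjugacy of complements directly, the same conclusion follows by a compactness argument: prove the statement for each finite quotient $G/(W\cap G)$ with $W\trianglelefteq_o G\langle\varphi\rangle$ (where it is the classical coprime-action fact), and pass to the limit using that the subgroups $W\cap G$ are $\varphi$-invariant, open in $G$, and intersect trivially.
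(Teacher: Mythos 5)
Your argument is correct. Note, however, that the paper does not actually prove this lemma: it simply records it as a special case of \cite[Proposition~2.3.16]{rib-zal}, so there is no in-paper proof to compare against. What you have written is, in effect, the standard argument lying behind that citation, carried out in the same spirit as the paper's own proof of Lemma~\ref{l-inv}: a $\varphi$-fixed coset $gN$ yields a second complement $\langle\varphi\rangle^{g}$ to $N$ in $N\langle\varphi\rangle$, the profinite Schur--Zassenhaus theorem conjugates it back to $\langle\varphi\rangle$ by an element $n\in N$, and the semidirect-product decomposition forces the normalizing element $gn$ to centralize $\varphi$ (your computation $x^{-1}\varphi x=(x^{-1}x^{\varphi^{-1}})\varphi$ together with $G\cap\langle\varphi\rangle=1$ handles the upgrade from normalizing to centralizing cleanly). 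The one hypothesis you use that is only implicit in the statement is that $N$ is $\varphi$-invariant, but this is forced anyway for $C_{G/N}(\varphi)$ to be meaningful and holds in all applications in the paper. Your fallback via finite quotients and an inverse-limit/compactness argument is also a valid alternative route. So the proposal is a complete and correct self-contained proof of a fact the paper outsources to the literature.
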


As a consequence, we have the following.

\begin{lemma} \label{l-gff}
If $\varphi $ is a coprime automorphism of a profinite group $G$, then
$[[G,\varphi],\varphi]=[G,\varphi]$.
\end{lemma}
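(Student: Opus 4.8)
The plan is to deduce the identity from the coprime decomposition $G=C_G(\varphi)[G,\varphi]$, which will come straight out of Lemma~\ref{l-cover}. One inclusion is free: since $[G,\varphi]$ is a closed $\varphi$-invariant normal subgroup of $G$, we have $[[G,\varphi],\varphi]\leqslant[G,\varphi]$ at once. So the entire content of the statement is the reverse inclusion $[G,\varphi]\leqslant[[G,\varphi],\varphi]$.

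First I would observe that $\varphi$ acts trivially on the quotient $G/[G,\varphi]$, by the very definition of $[G,\varphi]$; that is, $C_{G/[G,\varphi]}(\varphi)=G/[G,\varphi]$. Applying Lemma~\ref{l-cover} with the closed normal subgroup $N=[G,\varphi]$ gives $C_{G/[G,\varphi]}(\varphi)=C_G(\varphi)[G,\varphi]/[G,\varphi]$, and comparing the two descriptions of this fixed-point group yields the factorization
$$
G=C_G(\varphi)\,[G,\varphi].
$$

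For the final step I would use the commutator identity $[xy,\varphi]=[x,\varphi]^y[y,\varphi]$. Writing $H=[G,\varphi]$ and taking an arbitrary element in the form $g=cy$ with $c\in C_G(\varphi)$ and $y\in H$ (possible by the displayed decomposition), we have $[c,\varphi]=1$, so $[g,\varphi]=[cy,\varphi]=[y,\varphi]\in[H,\varphi]=[[G,\varphi],\varphi]$. Hence every topological generator $[g,\varphi]$ of $[G,\varphi]$ already lies in the closed subgroup $[[G,\varphi],\varphi]$, whence $[G,\varphi]\leqslant[[G,\varphi],\varphi]$, and equality follows.

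I expect the only point requiring care to be the topological bookkeeping: $[G,\varphi]$ and $[[G,\varphi],\varphi]$ are closures of abstract commutator subgroups, so the argument must be phrased at the level of the generating commutators $[g,\varphi]$ rather than for all elements simultaneously, and one must note that $G=C_G(\varphi)[G,\varphi]$ is a genuine factorization by closed subgroups delivered by Lemma~\ref{l-cover}. Apart from this, the proof is purely formal and needs no analytic input beyond Lemma~\ref{l-cover} and the coprimeness of $\varphi$.
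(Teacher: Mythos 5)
Your proof is correct and follows essentially the same route as the paper: both derive the factorization $G=C_G(\varphi)[G,\varphi]$ from Lemma~\ref{l-cover} applied to $N=[G,\varphi]$ and then conclude $[G,\varphi]=[C_G(\varphi)[G,\varphi],\varphi]=[[G,\varphi],\varphi]$. Your version merely spells out the final commutator computation (via $[cy,\varphi]=[c,\varphi]^y[y,\varphi]$ and closedness of the commutator subgroups) that the paper leaves implicit.
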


\begin{proof}
By Lemma~\ref{l-cover} we have $
G=C_G(\varphi)[G,\varphi]$, whence $
[G,\varphi]=[C_G(\varphi)[G,\varphi], \varphi]=[[G,\varphi], \varphi]$.
\end{proof}

\begin{lemma}\label{l-aac}
Let $\varphi$ be a coprime automorphism of finite order of a~profinite group~$G$. If $[g,\,{}_i\varphi]=1$ for $g\in G$ for some $i\in {\mathbb N}$, then $[g,\varphi]=1$.
\end{lemma}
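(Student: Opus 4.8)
The plan is to reduce the whole statement to the single situation in which the commutator $[g,\varphi]$ is itself fixed by $\varphi$, and then to annihilate it using that $\varphi$ has finite order coprime to $|G|$. Write $m=|\varphi|$, so that $\pi(m)\cap\pi(G)=\varnothing$, and put $h_j=[g,\,{}_j\varphi]$ for $j\geq 0$, so that $h_0=g$, $h_{j+1}=[h_j,\varphi]=h_j^{-1}h_j^{\varphi}$, and $h_i=1$ by hypothesis. The one identity driving everything is $h_j^{\varphi}=h_jh_{j+1}$; in particular, as soon as $h_{j+1}=1$ the element $h_j$ is fixed by $\varphi$.

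The heart of the matter is the case $i=2$, namely the claim that if $u:=[x,\varphi]$ happens to be $\varphi$-fixed for some $x\in G$, then $u=1$. Here I would use $x^{\varphi}=xu$ together with $u^{\varphi}=u$ to obtain $x^{\varphi^k}=xu^k$ for all $k$ by a one-line induction. Taking $k=m$ and using $\varphi^m=\mathrm{id}$ gives $x=x^{\varphi^m}=xu^m$, whence $u^m=1$. Since $u\in G$, its order is a $\pi(G)$-number, hence coprime to $m$; an element with $u^m=1$ whose order is coprime to $m$ is trivial, so $u=1$. This coprimeness step is the only non-formal ingredient, and it is exactly where I expect the (very mild) obstacle to lie: one must notice that the finite-order coprime action forces the fixed commutator to satisfy $u^m=1$ and therefore to vanish.

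With the case $i=2$ in hand, the general case follows by a short downward cascade. From $h_i=1$ and $h_{i-1}^{\varphi}=h_{i-1}h_i=h_{i-1}$ I see that $h_{i-1}=[h_{i-2},\varphi]$ is $\varphi$-fixed, so the case $i=2$, applied with $x=h_{i-2}$, yields $h_{i-1}=1$. Iterating, I get $h_{i-1}=h_{i-2}=\dots=h_1=1$, and the last equality $h_1=[g,\varphi]=1$ is precisely the assertion. (Equivalently, one may run an induction on $i$, passing from $g$ to $h=[g,\varphi]$ to invoke the inductive hypothesis and then closing with the base case.) I would stress that no passage to finite quotients is required, since the order argument is valid verbatim in the profinite setting: an element $u$ with $u^m=1$ has finite order dividing $m$, and the primes of that order lie in $\pi(G)$, which is disjoint from $\pi(m)$.
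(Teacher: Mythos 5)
Your proposal is correct and follows essentially the same route as the paper: both reduce to the case $i=2$ by induction, observe that the now $\varphi$-fixed commutator $u=[x,\varphi]$ satisfies $u^{m}=1$ via the telescoping identity $x^{\varphi^{m}}=xu^{m}$ (the paper writes this as the product $u\cdot u^{\varphi}\cdots u^{\varphi^{m-1}}=1$), and conclude $u=1$ from the coprimality $\pi(m)\cap\pi(G)=\varnothing$.
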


\begin{proof}
Let $|\varphi|=n$. Due to the obvious induction it is sufficient to prove that if $[g,\varphi,\varphi]=1$, then $[g,\varphi]=1$. We have
$$
[g,\varphi]\cdot [g,\varphi]^{\varphi}\cdots [g,\varphi]^{\varphi^{n-1}}=g^{-1}g^{\varphi}\cdot (g^{-1})^{\varphi}g^{\varphi ^2} \cdot (g^{-1})^{\varphi ^2}g^{\varphi ^3}\cdots (g^{-1})^{\varphi^{n-1}}g^{\varphi ^n} =1
$$
and $[g,\varphi]\in C_G(\varphi )$, whence $[g,\varphi]^{n}=1$. Since $\pi (n)\cap \pi (G)=\varnothing$, it follows that $[g,\varphi]=1$, as required.
\end{proof}

The following useful lemma was proved for coprime automorphisms of finite nilpotent groups in \cite{rod-shu}.

\begin{lemma}[{\cite[Lemma~2.4]{rod-shu}}]\label{l-copr}
Let $\varphi$ be a coprime automorphism of a finite nilpotent
group $G$. Then any element $g\in G$ can be uniquely written in the form
$g=cu$, where $c\in C_G(\varphi)$ and $u=[v,\varphi ]$ for some $v\in G$.
 \end{lemma}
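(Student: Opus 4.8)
The plan is to write $C=C_G(\varphi)$ and to introduce the set of \emph{single} commutators $K=\{[v,\varphi]\mid v\in G\}$ (which need not be a subgroup). The assertion is then equivalent to the statement that every coset $Cg$ meets $K$ in exactly one point: existence of a decomposition $g=cu$ amounts to $Cg\cap K\neq\varnothing$, and its uniqueness to $|Cg\cap K|\le 1$. I would establish existence by hand and then obtain uniqueness for free from a cardinality count, so the core of the argument is the identity $G=CK$ together with the computation of $|K|$.

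The cardinality of $K$ needs no hypothesis beyond $\varphi$ being an automorphism. Consider $\psi\colon G\to G$, $\psi(v)=[v,\varphi]=v^{-1}v^{\varphi}$. A direct manipulation shows that $\psi(v_1)=\psi(v_2)$ holds if and only if $v_2v_1^{-1}=(v_2v_1^{-1})^{\varphi}$, that is, $v_2v_1^{-1}\in C$; hence the fibres of $\psi$ are precisely the cosets $Cv$, and $|K|=|G:C|$.

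Existence, $G=CK$, is where coprimality and nilpotency are used, and I would prove it by induction on the nilpotency class of $G$. In the abelian base case coprime action gives $G=C\oplus[G,\varphi]$, and here $[G,\varphi]=K$ is a subgroup, so $G=CK$. For the inductive step put $Z=\gamma_c(G)\le Z(G)$, a $\varphi$-invariant central subgroup, and pass to $\overline G=G/Z$, which is nilpotent of smaller class with the coprime automorphism induced by $\varphi$. Given $g\in G$, the inductive hypothesis yields $\overline g=\overline{c_0}\,[\overline v,\overline\varphi]$ with $\overline{c_0}\in C_{\overline G}(\overline\varphi)$; by Lemma~\ref{l-cover} the element $\overline{c_0}$ lifts to some $c_0\in C$, so that $g=c_0[v,\varphi]z$ for some $z\in Z$. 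Applying the abelian case inside $Z$ writes $z=c_1[w,\varphi]$ with $c_1\in C_Z(\varphi)\le C$ and $w\in Z$. Since $c_1$ and $w$ are central, $c_1$ commutes with $[v,\varphi]$ and $[v,\varphi][w,\varphi]=[vw,\varphi]$, whence $g=(c_0c_1)[vw,\varphi]\in CK$.

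Finally, uniqueness follows from the count. The map $\Phi\colon K\to\{Cv\mid v\in G\}$, $u\mapsto Cu$, is surjective exactly because $G=CK$, and it is a map between finite sets of the same cardinality $|G:C|$ by the second step; hence $\Phi$ is a bijection. Consequently, if $g=cu=c'u'$ with $c,c'\in C$ and $u,u'\in K$, then $uu'^{-1}=c^{-1}c'\in C$ gives $Cu=Cu'$, and injectivity of $\Phi$ forces $u=u'$ and therefore $c=c'$. I expect the main obstacle to be the existence step: the delicate point is converting the product $[v,\varphi][w,\varphi]$ into a single commutator $[vw,\varphi]$, which succeeds only because the correction term $z$ lives in the central $\varphi$-invariant subgroup $Z$, and one must pass through Lemma~\ref{l-cover} to pull the centralizing factor $\overline{c_0}$ back from $\overline G$ to $G$.
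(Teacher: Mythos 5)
Your proof is correct. Note that the paper itself gives no proof of this lemma --- it is imported verbatim from \cite[Lemma~2.4]{rod-shu} --- so there is nothing internal to compare against; judged on its own, your argument is complete. Each step checks out: the fibre computation $\psi(v_1)=\psi(v_2)\Leftrightarrow v_2v_1^{-1}\in C$ gives $|K|=|G:C|$ with no hypotheses; the induction on nilpotency class for $G=CK$ correctly reduces to the abelian decomposition $G=C_G(\varphi)\times[G,\varphi]$, with the lift of $\overline{c_0}$ supplied by Lemma~\ref{l-cover} and the merging $[v,\varphi][w,\varphi]=[vw,\varphi]$ valid precisely because $w$ and the correction term live in the central $\varphi$-invariant subgroup $Z=\gamma_c(G)$; and the cardinality count then yields uniqueness for free. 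This is essentially the standard argument one would expect to find in \cite{rod-shu}, and your closing observation is worth keeping in mind: coprimality and nilpotency are needed only for the existence part $G=CK$, while uniqueness is automatic from $|K|=|G:C|$ once existence is known.
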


A similar result follows also for profinite groups, but we prefer to state more specialized lemmas following from Lemma~\ref{l-copr}.

 \begin{lemma}\label{l-copr1}
 Let $\varphi$ be a coprime automorphism of a pronilpotent
group~$G$. Then the restriction of the mapping
$$
\theta: x\mapsto [x,\varphi]
$$
to the set $K=\{ [g,\varphi]\mid g\in G\}$ is injective.
 \end{lemma}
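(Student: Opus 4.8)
The plan is to reduce to finite nilpotent quotients, where Lemma~\ref{l-copr} is directly available, and then pass to the inverse limit. Since $G$ is pronilpotent, the open normal subgroups $N$ with $G/N$ finite nilpotent form a base of neighbourhoods of $1$; intersecting each such $N$ with its finitely many images under the procyclic group $\langle\varphi\rangle$ (which permutes the finitely many open normal subgroups of a given index), we may moreover take $N$ to be $\varphi$-invariant. For such $N$ the induced automorphism $\bar\varphi$ of $G/N$ has finite order with $\pi(\langle\bar\varphi\rangle)\subseteq\pi(\langle\varphi\rangle)$, so $\bar\varphi$ is again a coprime automorphism, now of a \emph{finite} nilpotent group. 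I would first prove injectivity of $\theta|_K$ on each $G/N$ and then lift the conclusion to $G$.

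For the finite nilpotent case, suppose $u_1,u_2\in K$ satisfy $[u_1,\varphi]=[u_2,\varphi]$, and put $w=u_2u_1^{-1}$. Writing $u_1^\varphi=u_1[u_1,\varphi]=u_1[u_2,\varphi]=u_1u_2^{-1}u_2^\varphi$ and substituting, one computes $w^\varphi=u_2^\varphi(u_1^\varphi)^{-1}=u_2^\varphi(u_2^\varphi)^{-1}u_2u_1^{-1}=u_2u_1^{-1}=w$, so that $w\in C_G(\varphi)$. Hence $u_2=wu_1$ exhibits $u_2$ as a product of the element $w\in C_G(\varphi)$ and the element $u_1\in K$; since $u_2=1\cdot u_2$ is another such expression, the uniqueness in Lemma~\ref{l-copr} forces $w=1$ and $u_1=u_2$. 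This is exactly the injectivity of $\theta$ on $K$ in the finite nilpotent case, and it is precisely the membership $u_1,u_2\in K$ (rather than merely $u_1,u_2\in G$) that lets us invoke the uniqueness statement.

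Finally, suppose $u_1=[g_1,\varphi]$ and $u_2=[g_2,\varphi]$ lie in $K$ with $\theta(u_1)=\theta(u_2)$. For every $\varphi$-invariant $N$ as above, the images of $u_1,u_2$ in $G/N$ are the commutators $[\bar g_1,\bar\varphi]$ and $[\bar g_2,\bar\varphi]$, hence lie in the corresponding set $K$ for $G/N$, and $\theta$ identifies them; the finite case then yields equality of their images in every such $G/N$. As these quotients separate the points of $G$, I conclude $u_1=u_2$. The only genuinely delicate point is the finite-level argument—recognising that $u_2u_1^{-1}$ is a $\varphi$-fixed point and that the decomposition of Lemma~\ref{l-copr} then applies—while the choice of the $\varphi$-invariant base and the passage to the limit are routine.
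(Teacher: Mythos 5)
Your proof is correct, and at the finite level it takes a genuinely different route from the paper. The paper uses only the \emph{existence} half of Lemma~\ref{l-copr}: every $g$ equals $c[v,\varphi]$, whence $[g,\varphi]=[[v,\varphi],\varphi]$, so $\theta$ maps the finite set $K$ onto itself and is therefore injective by counting. You instead use the \emph{uniqueness} half: from $[u_1,\varphi]=[u_2,\varphi]$ you correctly compute that $w=u_2u_1^{-1}$ is $\varphi$-fixed, and then the two decompositions $u_2=w\cdot u_1=1\cdot u_2$ force $w=1$. Your argument is more direct in that it never needs $K$ to be finite (only $G$ finite, so that Lemma~\ref{l-copr} applies as stated), whereas the paper's pigeonhole step genuinely requires finiteness of $K$; on the other hand the paper's version also delivers the surjectivity $\theta(K)=K$, which is reused later (in Lemma~\ref{l-copr2}). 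The passage to the pronilpotent case is the same in both proofs. One small repair: your justification that $\varphi$-invariant open normal subgroups form a base --- ``$\varphi$ permutes the finitely many open normal subgroups of a given index'' --- is false for general profinite groups (an infinite Cartesian power of $C_p$ has infinitely many open subgroups of index $p$). The standard argument is to take open normal subgroups $M$ of the profinite semidirect product $G\langle\varphi\rangle$ and intersect with $G$: the subgroups $M\cap G$ are open, normal, $\varphi$-invariant, and form a base of neighbourhoods of $1$ in $G$. The paper uses this fact tacitly as well, so this is a flaw in justification rather than in substance.
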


\begin{proof}
 First consider the case where $G$ is finite (and nilpotent). Then $K=\theta (K)$. Indeed, every $g\in G$ is equal to $g=c[v,\varphi]$ for $c\in C_G(\varphi)$ and some $v\in G$ by Lemma~\ref{l-copr}. Hence,
$$
[g,\varphi]=[c[v,\varphi],\varphi]=[[v,\varphi],\varphi]\in \theta (K).
$$
 Thus, $\theta $ is surjective on the finite set $K$, and therefore also injective.

 The result for the general case of pronilpotent group $G$ follows: if we had $[u,\varphi,\varphi]=[v,\varphi,\varphi]$ for $[u,\varphi]\ne [v,\varphi]$, then the images of $[u,\varphi]$ and $[v,\varphi]$ would be different also in some finite quotient over an $\varphi$-invariant open normal subgroup, contrary to what was proved for finite groups.
\end{proof}

 \begin{lemma}\label{l-copr2}
 Let $\varphi$ be a coprime automorphism of a pronilpotent
group~$G$ with a countable Engel sink $\mathscr E(\varphi)$ in the semidirect product $G\langle \varphi\rangle$. Then the set $K=\{ [g,\varphi]\mid g\in G\}$ is a finite smallest Engel sink of $\varphi$ in the semidirect product $G\langle \varphi\rangle$.
 \end{lemma}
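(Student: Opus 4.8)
The plan is to establish three facts: that $K$ is an Engel sink of $\varphi$ in $G\langle\varphi\rangle$, that $K$ is finite, and that $K$ lies inside every Engel sink of $\varphi$. The first is immediate and I would dispose of it at once: for any $x=g\varphi^{t}\in G\langle\varphi\rangle$ one has $[x,\varphi]=[g,\varphi]^{\varphi^{t}}=[g^{\varphi^{t}},\varphi]\in K$, so every commutator $[x,{}_{n}\varphi]$ with $n\geq 1$ already lies in $K$ and $K$ is an Engel sink with the constant function $n(x,\varphi)=1$. Writing $\theta\colon x\mapsto[x,\varphi]$, so that $\theta^{n}(s)=[s,{}_{n}\varphi]$ for $s\in G$, I record two structural observations to be used below: $\theta$ is injective on $K$ by Lemma~\ref{l-copr1}; and, since $[g,\varphi]=[g',\varphi]$ exactly when $g'g^{-1}\in C_{G}(\varphi)$, the assignment $g\mapsto[g,\varphi]$ induces a bijection between the cosets of the closed subgroup $C_{G}(\varphi)$ and $K$, so that $|K|=[G:C_{G}(\varphi)]$.

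The main obstacle is the finiteness of $K$, and the crux is to prove first that $K$ is merely countable. Here I would exploit the given countable sink $\mathscr E(\varphi)$: for each $s\in K\subseteq G$ the Engel sink condition applied to the base element $s$ gives $\theta^{n}(s)=[s,{}_{n}\varphi]\in\mathscr E(\varphi)$ for all $n\geq n(s,\varphi)$. Putting $K'=K\cap\mathscr E(\varphi)$, which is countable, each $s\in K$ is therefore carried into $K'$ by some power of $\theta$, so $K=\bigcup_{n\geq 0}\{s\in K\mid\theta^{n}(s)\in K'\}$. Because $\theta$ is injective on $K$, every set in this union injects into $K'$ and so is countable, and a countable union of countable sets is countable; hence $K$ is countable.

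It then follows that the index $[G:C_{G}(\varphi)]=|K|$ is countable, so Lemma~\ref{l-unci}, applied to the closed subgroup $C_{G}(\varphi)$ of the compact group $G$, forces this index to be finite and $K$ with it. Finiteness upgrades the injective self-map $\theta$ to a permutation of $K$, so each $s\in K$ is periodic, say $\theta^{p}(s)=s$. For an arbitrary Engel sink $\mathscr E$ of $\varphi$, taking $x=s$ in its definition yields $s=\theta^{np}(s)=[s,{}_{np}\varphi]\in\mathscr E$ once $np$ is large enough, whence $K\subseteq\mathscr E$. This shows $K$ is the smallest Engel sink of $\varphi$ and completes the proof.
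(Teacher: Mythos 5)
Your proof is correct and follows essentially the same route as the paper's: countability of $K$ via the injectivity of $\theta$ on $K$ (Lemma~\ref{l-copr1}) combined with the countable sink, then finiteness of $[G:C_G(\varphi)]$ via Lemma~\ref{l-unci}, and minimality from $\theta$ being a bijection of the finite set $K$. You merely organize the countable-union bookkeeping by the exponent $n$ rather than by the sink element $s$, and you spell out the periodicity argument for minimality that the paper compresses into ``injective, hence surjective.''
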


\begin{proof}
Since the mapping $\theta: x\mapsto [x,\varphi]$ is injective on the set $K$
by Lemma~\ref{l-copr1}, every mapping $\theta ^k: x\mapsto [x,\,{}_k\varphi]$ is also injective on $K$. Therefore for every $k\in {\mathbb N}$ and every element $s\in \mathscr E(\varphi )$ of the Engel sink $\mathscr E(\varphi )$ there is at most one element of the form $[g,\varphi]$ such that $[[g,\varphi], \,{}_k\varphi]=s$. Hence  for  every element $s\in \mathscr E(\varphi )$ there are at most countably many elements of the form $[g,\varphi]$ such that $[[g,\varphi], \,{}_j\varphi]=s$ for some $j$.

Since $ \mathscr E(\varphi )$ is countable and for every element $[g,\varphi]\in K$ there is $s\in  \mathscr E(\varphi )$ such that $[[g,\varphi], \,{}_j\varphi]=s$ for some $j$, the set $K$ is at most countable as a countable union of countable sets. It is well known that the set $K=\{ [g,\varphi]\mid g\in G\}$ is in a one-to-one correspondence with the set of (say, right) cosets of the centralizer $C_G(\varphi )$. But this set of cosets cannot be infinite countable by Lemma~\ref{l-unci}. Therefore it is finite, and so is the set $K$.

 The mapping $[g,\varphi ]\mapsto [g,\varphi,\varphi]$ is injective on $K$ by Lemma~\ref{l-copr1}, and therefore it is also surjective, since $K$ is finite. Hence this set is a smallest finite Engel sink of~$\varphi$.
 \end{proof}

 \begin{lemma}\label{l-copr3}
 Let $\varphi$ be a coprime automorphism of a pronilpotent
group~$G$. If  all elements of  the semidirect product $G\langle \varphi\rangle$ have countable Engel sinks, then $\gamma _{\infty} (G\langle \varphi\rangle)$ is finite  and  $\gamma _{\infty}(G\langle \varphi\rangle)= [G,\varphi]$.
 \end{lemma}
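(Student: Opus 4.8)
The plan is to prove the two assertions in turn: first the identification $\gamma_\infty(G\langle\varphi\rangle)=[G,\varphi]$, which is essentially structural, and then the finiteness of this subgroup, which is the substantial part. Throughout write $H=G\langle\varphi\rangle$ and $V=[G,\varphi]$; note that $V$ is a closed $\varphi$-invariant subgroup that is normal in $H$, and that Lemma~\ref{l-gff} gives $[V,\varphi]=V$.

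For the identification, the inclusion $V\leq\gamma_\infty(H)$ follows by induction on $n$ from $V=[V,\varphi]$: if $V\leq\gamma_n(H)$ then $V=[V,\varphi]\leq[\gamma_n(H),H]\leq\gamma_{n+1}(H)$, and $V\leq\gamma_1(H)=H$ starts the induction. For the reverse inclusion I would pass to $H/V$. Since $[g,\varphi]\in V$ for every $g\in G$, the automorphism $\bar\varphi$ centralizes $G/V$, and one checks likewise that $G/V$ centralizes $\bar\varphi$; as $\langle\varphi\rangle\cap V=1$, this exhibits $H/V$ as the direct product of the pronilpotent group $G/V$ and the procyclic (hence pronilpotent) group $\langle\bar\varphi\rangle$. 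Therefore $H/V$ is pronilpotent, so $\gamma_\infty(H/V)=1$, which yields $\gamma_\infty(H)\leq V$. Combining the two inclusions gives $\gamma_\infty(H)=V=[G,\varphi]$.

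For the finiteness, the starting point is Lemma~\ref{l-copr2}, which tells us that $K=\{[g,\varphi]\mid g\in G\}$ is finite. Since the fibres of the map $x\mapsto[x,\varphi]$ are exactly the cosets of $C_G(\varphi)$, we get $|G:C_G(\varphi)|=|K|<\infty$, and hence $C_V(\varphi)=V\cap C_G(\varphi)$ has finite index in $V$. Let $V_0$ be the core in $V$ of $C_V(\varphi)$, that is, the largest normal subgroup of $V$ contained in $C_V(\varphi)$; it is normal in $V$, of finite index, centralized by $\varphi$, and $\varphi$-invariant. Now $V$ is compact and $\varphi$ restricts to a continuous automorphism of $V$ with $V=[V,\varphi]$, so Lemma~\ref{l-fng} applies to $V_0\trianglelefteq V$ and gives $V_0\leq Z(V)$. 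Thus $Z(V)$ has finite index in $V$, and by Schur's theorem the derived subgroup $V'=[V,V]$ is finite. It remains to show that $V$ itself is finite, and here is the decisive step: on the abelianization $A=V/V'$ the coprime automorphism $\varphi$ satisfies $A=[A,\varphi]$ (since $V=[V,\varphi]$), and the coprime decomposition $A=C_A(\varphi)\times[A,\varphi]$ forces $C_A(\varphi)=1$. As $V_0$ is centralized by $\varphi$, its image in $A$ lies in $C_A(\varphi)=1$, whence $V_0\leq V'$. Since $V_0$ has finite index in $V$ and $V'$ is finite, we conclude that $V=[G,\varphi]$ is finite.

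The main obstacle is the finiteness statement, and within it the final paragraph: knowing only that $C_G(\varphi)$ has finite index does not directly bound $[G,\varphi]$, and the argument has to route through Lemma~\ref{l-fng} to produce a central subgroup of finite index, then through Schur's theorem to make $V'$ finite, and finally through the fixed-point-free coprime action on the abelianization $V/V'$ to push the finite-index central subgroup $V_0$ inside the finite group $V'$. The identification $\gamma_\infty(H)=[G,\varphi]$, by contrast, is routine once the direct product decomposition of $H/[G,\varphi]$ is observed.
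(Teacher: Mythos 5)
Your proof is correct, but the finiteness argument takes a genuinely different route from the paper's. The paper first invokes Theorem~\ref{t2} to make $G$ locally nilpotent, so that $[G,\varphi]=\langle K\rangle$ is a nilpotent group generated by the finite set $K$ of Lemma~\ref{l-copr2}; it then observes that on the abelianization of $[G,\varphi]$ the commutator map with $\varphi$ is a surjective endomorphism (using $[[G,\varphi],\varphi]=[G,\varphi]$ from Lemma~\ref{l-gff}), so the abelianization consists of images of elements of $K$ and is finite, and a nilpotent group with finite abelianization is finite. You instead bypass Theorem~\ref{t2} entirely: from $|G:C_G(\varphi)|=|K|<\infty$ you take the core $V_0$ of $C_V(\varphi)$ in $V=[G,\varphi]$, apply Lemma~\ref{l-fng} to get $V_0\leq Z(V)$, invoke Schur's theorem to make $V'$ finite, and then use the coprime decomposition of $V/V'$ to force $V_0\leq V'$. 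Both arguments are sound; the paper's is shorter given that Theorem~\ref{t2} has already been established, while yours is more self-contained in that it avoids the Wilson--Zelmanov machinery underlying Theorem~\ref{t2} and relies only on Lemmas~\ref{l-copr2}, \ref{l-gff}, \ref{l-fng} and classical facts. The one step you should justify explicitly is the decomposition $A=C_A(\varphi)\times[A,\varphi]$ for the profinite abelian group $A=V/V'$ under a coprime automorphism whose procyclic group may be infinite: it follows from the finite case by passing to finite quotients (or from Lemmas~\ref{l-cover} and \ref{l-gff}, since any element of $C_A(\varphi)\cap[A,\varphi]$ dies in every finite quotient), but it is not literally one of the quoted lemmas. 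Your identification $\gamma_\infty(G\langle\varphi\rangle)=[G,\varphi]$ matches the paper's argument in substance.
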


 \begin{proof}
The group $G$ is locally nilpotent by Theorem~\ref{t2}. By Lemma~\ref{l-copr2}, the set $K=\{ [g,\varphi]\mid g\in G\}$ is finite. Therefore the commutator subgroup $[G,\varphi ]=\langle K\rangle$  is nilpotent.
By Lemma~\ref{l-gff},
\begin{equation}\label{e-ff}
[[G,\varphi],\varphi]=[G,\varphi].
\end{equation}

Let $V$ be the quotient of  $[G,\varphi ]$ by its derived subgroup. For any $u,v\in V$ we have $[uv,\varphi ]=[u,\varphi][v,\varphi]$, since $V$ is abelian, and $[V,\varphi ]=V$ by \eqref{e-ff}. Hence $V$ consists of the images of elements of $K$, and therefore is finite. Then the nilpotent group $[G,\varphi]$ is also finite (see, for example, \cite[5.2.6]{rob}).

The quotient $G\langle \varphi\rangle/[G,\varphi]$ is obviously the direct product of the images of $G$ and $\langle \varphi\rangle$ and therefore is pronilpotent. Hence,  $\gamma _{\infty}(G\langle \varphi\rangle)\leq [G,\varphi]$, so $\gamma _{\infty}(G\langle \varphi\rangle)$ is finite. Since the set of commutators $\{ [g,\varphi]\mid g\in G\}$ is the smallest Engel sink of $\varphi$ by Lemma~\ref{l-copr2}, it follows that  $\gamma _{\infty}(G\langle \varphi\rangle)= [G,\varphi]$.
 \end{proof}

\section{Prosoluble groups}

In this section we prove Theorem~\ref{t1} for prosoluble groups.
First we consider the case of prosoluble groups of finite Fitting height. Recall that by Theorem~\ref{t2} any pronilpotent group with countable Engel sinks is locally nilpotent. Therefore, if $G$ is a profinite group with countable Engel sinks, then the largest pronilpotent normal subgroup $F(G)$  is also the largest locally nilpotent normal subgroup, and we call it the Fitting subgroup of $G$. Then further terms of the Fitting series are defined as usual by induction: $F_1(G)=F(G)$ and $F_{i+1}(G)$ is the inverse image of $F(G/F_i(G))$. A group has finite Fitting height if $F_k(G)=G$ for some $k\in {\mathbb N}$.

 \begin{proposition}\label{pr-height}
 Let $G$ be a prosoluble group of finite Fitting height. If  every element of  $G$ has a countable Engel sink, then $\gamma _{\infty} (G)$ is finite.
 \end{proposition}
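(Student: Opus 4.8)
The plan is to induct on the Fitting height $h$ of $G$. The base case $h=1$ is immediate: then $G=F(G)$ is pronilpotent, hence locally nilpotent by Theorem~\ref{t2}, so $\gamma_\infty(G)=1$. For the inductive step I would first reduce to the case $h=2$. Put $T=F_{h-1}(G)$; this is a prosoluble group of Fitting height $h-1$ inheriting the countable-sink hypothesis, so by induction $\gamma_\infty(T)$ is finite. It is characteristic in $T\trianglelefteq G$, hence a finite normal subgroup of $G$, and passing to $\bar G=G/\gamma_\infty(T)$ makes $\bar T=T/\gamma_\infty(T)$ pronilpotent while $\bar G/\bar T\cong G/T$ is pronilpotent (since $G/F_{h-1}(G)$ equals its own Fitting subgroup). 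Thus $\bar G$ has Fitting height at most $2$, and because the surjection $G\to\bar G$ carries $\gamma_\infty(G)$ onto $\gamma_\infty(\bar G)$ with kernel inside the finite group $\gamma_\infty(T)$, finiteness of $\gamma_\infty(\bar G)$ yields finiteness of $\gamma_\infty(G)$. This reduces everything to height $2$.

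So assume $F=F(G)$ is pronilpotent (hence locally nilpotent by Theorem~\ref{t2}), $G/F$ is pronilpotent, and therefore $\gamma_\infty(G)\leq F=\prod_p F_p$, the Cartesian product of the Sylow subgroups of $F$, each $F_p$ being $G$-invariant. I would analyse $\gamma_\infty(G)\cap F_p$ prime by prime. Fix $p$ and choose, via the prosoluble coprime machinery (Lemma~\ref{l-inv}), a Hall $p'$-subgroup $H_p$ of $G$; it acts coprimely on the pro-$p$ group $F_p$. A Sylow $p$-subgroup of $G$ acts on $F_p$ as a pro-$p$ group, hence nilpotently, so its contribution to iterated commutators dies out; what survives in the stable commutator is governed by the coprime action of $H_p$. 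Using $\gamma_\infty(G)=[\gamma_\infty(G),G]$ together with the coprime decomposition $F_p=C_{F_p}(H_p)\cdot[F_p,H_p]$ and $[F_p,H_p]=[[F_p,H_p],H_p]$ (Lemmas~\ref{l-cover}, \ref{l-gff}), I expect to identify $\gamma_\infty(G)\cap F_p$ with the coprime-stable commutator $[F_p,H_p]$.

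It then remains to prove two things: (i) for each prime $p$ the subgroup $[F_p,H_p]$ is finite, and (ii) $[F_p,H_p]=1$ for all but finitely many $p$; together these give $\gamma_\infty(G)=\prod_p[F_p,H_p]$ finite. For (ii) I would exploit countability directly: for a single $a\in G$, by definition of an Engel sink all sufficiently long commutators $[f,\,{}_na]$ with $f\in F$ lie in the countable set $\mathscr E(a)$; since $a$ preserves each $F_p$ and the stable value factors as $\prod_p[f_p,\,{}_\infty a]$, nontrivial coprime action of $a$ on infinitely many $F_p$ would let one choose the coordinates $f_p$ independently and produce uncountably many distinct elements of $\mathscr E(a)$, a contradiction. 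For (i) the coprime Engel-sink lemmas apply to each procyclic $\langle h\rangle\leq H_p$: by Lemma~\ref{l-copr2} the set $\{[f,h]\mid f\in F_p\}$ is finite, and by Lemma~\ref{l-copr3} one has $\gamma_\infty(F_p\langle h\rangle)=[F_p,h]$ finite. Finally, since $[F_p,H_p]$ is a closed subgroup of the compact group $G$, proving it \emph{countable} already forces it to be finite, because an infinite profinite group is uncountable by Theorem~\ref{bct} (as in Lemma~\ref{l-unci}).

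I expect the main obstacle to be precisely step (i) in the form just described: Lemma~\ref{l-copr3} is tailored to a single procyclic coprime automorphism, whereas $H_p$ is an arbitrary pronilpotent pro-$p'$ group, and $[F_p,H_p]$ is a union of the finite pieces $[F_p,h]$ over uncountably many $h$. The crux is to upgrade the single-automorphism finiteness to the full coprime action, controlling the elementary abelian quotient $[F_p,H_p]/\Phi([F_p,H_p])$, which is an $\mathbb{F}_pH_p$-module with $[V,H_p]=V$. My approach would be to show this module is countable, hence finite, by combining the injectivity of the commutator maps (Lemmas~\ref{l-copr1}, \ref{l-copr2}) with the countability of the relevant Engel sinks: an infinite such module would again manufacture an element of the semidirect product whose iterated commutators form an uncountable set, contradicting countability of its sink. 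Assembling (i) and (ii) uniformly, through the Hall and Sylow structure of the prosoluble group $G$, is the remaining technical point to be handled with care.
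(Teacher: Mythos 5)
There is a genuine gap, and it lies exactly where you predicted, plus in one place you did not flag. The paper's key structural move, which your proposal is missing, is to \emph{not} compute $\gamma_{\infty}(G)$ directly in the height-$2$ case: instead it shows that every single element $a\in G$ has a \emph{finite} Engel sink and then invokes Theorem~\ref{t4.1} (the already-proved compact-group theorem for finite sinks) to conclude that $\gamma_{\infty}(G)$ is finite. This elementwise reduction is what makes the coprime machinery sufficient: for a fixed $a$ one writes $a=a_pa_{p'}$ and applies Lemma~\ref{l-copr3} to the single coprime automorphism $a_{p'}$ acting on $P\langle a_p\rangle$, getting a finite $\gamma_{\infty}(P\langle a\rangle)=[P,a_{p'}]$ for each $p$; the uncountability argument (building continuum many sink elements from products $s_\sigma$ over subsets of primes, using $s=[s,\,{}_{kl}a]$ from Lemma~\ref{l-min}) then shows these are trivial for all but finitely many $p$, \emph{for that fixed $a$}. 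Your route instead requires the two global statements (i) $[F_p,H_p]$ finite for a whole Hall $p'$-subgroup $H_p$ and (ii) $[F_p,H_p]=1$ for almost all $p$, both of which are strictly stronger than what Lemmas~\ref{l-copr1}--\ref{l-copr3} give, since those lemmas are tailored to one procyclic coprime automorphism.

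Concretely: for (i), your proposed patch fails because $[F_p,H_p]$ is the closed subgroup generated by the union of the finite sets $\{[f,h]\mid f\in F_p\}$ over \emph{uncountably many} $h\in H_p$; there is no reason a single element of the semidirect product should have an uncountable Engel sink just because this union is infinite, so countability of $[F_p,H_p]$ does not follow from countability of individual sinks. For (ii), your uncountability argument is per-element: it shows that each fixed $a$ acts coprimely nontrivially on only finitely many $F_p$, but the set of primes $p$ with $[F_p,H_p]\neq 1$ is the union of these finite sets over all elements of $G$, which may a priori be infinite. The paper never needs to close either gap because Theorem~\ref{t4.1} does the ``assembly'' for it. If you want to salvage your approach you would essentially have to reprove the relevant part of Theorem~\ref{t4.1}; the intended proof is to notice that once every element has a finite sink, the hypotheses of that theorem hold and you are done.
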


\begin{proof}  It is sufficient to prove the result for the case of Fitting height 2. Then the general case will follow by induction on the Fitting height $k$ of $G$. Indeed, then $\gamma _{\infty}(G/ \gamma _{\infty}(F_{k-1}(G)))$ is finite, while $\gamma _{\infty}(F_{k-1}(G))$ is finite by the induction hypothesis, and as a result, $\gamma _{\infty}(G)$ is finite.

Thus, we assume that $G=F_2(G)$. By Theorem~\ref{t4.1}, it is sufficient to show that every element  $a\in G$ has a finite Engel sink.  Since $G/F(G)$ is locally nilpotent, an Engel sink of $a$ in $F(G)\langle a\rangle$ is also an Engel sink of $a$ in $G$.

For a prime $p$, let $P$ be a Sylow $p$-subgroup of $F(G)$, and write $a=a_pa_{p'}$, where $a_p$ is a $p$-element, $a_{p'}$ is a $p'$-element, and $[a_p,a_{p'}]=1$. Then $P\langle a_p\rangle$ is a normal Sylow $p$-subgroup of  $P\langle a\rangle$, on which $a_{p'}$ induces by conjugation a coprime automorphism. By Lemma~\ref{l-copr3} the subgroup $\gamma _{\infty}(P\langle a\rangle)=[P,a_{p'}]$ is finite. Since the pronilpotent group $P\langle a\rangle/\gamma _{\infty}(P\langle a\rangle)$ is locally nilpotent by Theorem~\ref{t2},  we can choose a finite smallest Engel sink $\mathscr E_p(a)\subseteq \gamma _{\infty}(P\langle a\rangle)$ of $a$ in $P\langle a\rangle$.

Note that
\begin{equation}\label{e-equiv}
   \text{if}\quad \mathscr E_p(a)=\{ 1\}, \quad\text{then}\quad \gamma _{\infty}(P\langle a\rangle)=1.
\end{equation}
Indeed, if  $\mathscr E_p(a)=\{ 1\}$, then, in particular, the image $\bar a$ of $a$ in $\langle a\rangle/C_{\langle a\rangle}([P,a_{p'}])$ is an Engel element of the finite group $[P,a_{p'}]\langle \bar a\rangle$ and therefore $\bar a$ is contained in its Fitting subgroup by  Baer's theorem \cite[Satz~III.6.15]{hup}. Then
$$
\gamma _{\infty}(P\langle a\rangle)=[P,a_{p'}]=[[P,a_{p'}],a_{p'}]=[[P,a_{p'}],\bar a_{p'}]=1.
$$

By Lemma~\ref{l-min}, for every $s\in \mathscr E_p(a)$ we have $s=[s,\,{}_ka]$ for some $k\in {\mathbb N}$, and then also
\begin{equation}\label{e-cycl}
s=[s,\,{}_{kl}a]\quad \text{for any}\;\, l\in {\mathbb N}.
\end{equation}

We claim that $\mathscr E_p(a)=\{1\}$ for all but finitely many primes $p$. Suppose the opposite, and $\mathscr E_{p_i}(a)\ne \{1\}$ for each prime $p_i$ in an infinite set of primes~$\pi$. Choose a nontrivial element $s_{p_i}\in \mathscr E_{p_i}(a)$ for every $p_i\in \pi$. For any subset $\sigma\subseteq \pi$, consider the (infinite) product
$$
s_{\sigma}=\prod _{p_j\in \sigma} s_{p_j}.
$$
Note that the elements $s_{p_j}$ commute with one another  belonging to different normal Sylow subgroups of $F(G)$.  If  $\mathscr E(a)$ is any Engel sink of $a$ in $G$, then for some $k\in {\mathbb N}$ the commutator $[s_{\sigma},\,{}_ka]$ belongs to $\mathscr E(a)$. Because of the properties \eqref{e-cycl}, all the components of $[s_{\sigma},\,{}_ka]$ in the Sylow $p_j$-subgroups of $F(G)$ for $p_j\in \sigma$ are non-trivial, while all the other components in Sylow $q$-subgroups for $q\not\in \sigma$ are trivial by construction. Therefore for different subsets $\sigma\subseteq \pi$ we thus obtain different elements of  $\mathscr E(a)$. The infinite set $\pi$ has continuum of different subsets, whence $\mathscr E(a)$ is uncountable, contrary to $a$ having a countable Engel sink by the hypothesis of the proposition.

Thus, for all but finitely many primes $p$ we have $\mathscr E_p(a)=\{1\}$, which is the same as $\gamma _{\infty}(P\langle a\rangle)=1$ by \eqref{e-equiv}. Therefore the subgroup
$$
\gamma _{\infty}(F(G)\langle a\rangle)=\prod_p\gamma _{\infty}(P\langle a\rangle)
$$
 is finite. The quotient $F(G)\langle a\rangle/\gamma _{\infty}(F(G)\langle a\rangle)$ is pronilpotent and therefore locally nilpotent by Theorem~\ref{t2}. Hence we can choose a finite Engel sink for $a$ in $G$ as a subset of $\gamma _{\infty}(F(G)\langle a\rangle)$.

Thus, every element of $G$ has a finite Engel sink,  and therefore $\gamma _{\infty}(G)$ is finite by Theorem~\ref{t4.1}.
\end{proof}

\begin{lemma}\label{l-finp}
Let $\varphi $ be a coprime automorphism
of a prosoluble group $G$ such that the set of primes $\pi (G)$ is finite. If every element of the semidirect product $G\langle \varphi\rangle$ has a countable Engel sink, then the subgroup $[G,\varphi ]$ is finite.
\end{lemma}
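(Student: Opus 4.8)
The plan is to reduce to the situation $G=[G,\varphi]$ and then to prove that such a $G$ is actually finite. By Lemma~\ref{l-gff} the subgroup $[G,\varphi]$ is $\varphi$-invariant and satisfies $[[G,\varphi],\varphi]=[G,\varphi]$, and it inherits all the hypotheses: it is a closed subgroup of the prosoluble group $G$, hence prosoluble, with $\pi([G,\varphi])\subseteq\pi(G)$ finite, and countable Engel sinks pass to it as to any section. Hence I may replace $G$ by $[G,\varphi]$ and assume $G=[G,\varphi]$ throughout, so that the goal becomes to show that $G$ is finite.

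The decisive step is to prove that $G$ has finite Fitting height, and this is where the finiteness of $\pi(G)$ and a Hall--Higman--type theorem enter. First I would fix a $\varphi$-invariant Fitting series of $G$, whose factors $L_i=F_i(G)/F_{i-1}(G)$ are pronilpotent and $\varphi$-invariant, with $\varphi$ acting coprimely on each. Applying Lemma~\ref{l-copr3} to each pronilpotent layer, the subgroup $[L_i,\varphi]$ is finite and is the smallest Engel sink of $\varphi$ in $L_i\langle\varphi\rangle$; moreover a routine coprime-action argument (using that $\varphi$ centralizing every Fitting factor would force $[G,\varphi]=1$ by Lemma~\ref{l-aac}) shows that if the Fitting height is infinite then $[L_i,\varphi]\neq 1$ for infinitely many $i$. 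The heart of the matter is then to convert this into a contradiction with the countability of the Engel sink of $\varphi$ in $G\langle\varphi\rangle$: using Hall--Higman--type control of the coprime action along the Fitting series, I would lift, for each index $i$ in an infinite set, a nontrivial cyclic sink element out of $[L_i,\varphi]$ in such a way that distinct subsets of the index set produce distinct elements $[s,{}_k\varphi]$ of the sink of $\varphi$. Since an infinite set has continuum many subsets, this yields an uncountable Engel sink, contradicting the hypothesis, and forces the Fitting height to be finite. I expect this cross-layer independence to be the main obstacle: unlike the Sylow components exploited in the continuum argument of Proposition~\ref{pr-height}, the Fitting factors are not direct factors, and it is precisely the Hall--Higman--type theorem, together with $\pi(G)$ being finite (so that the unboundedness cannot be absorbed into finitely many primes), that makes the independence rigorous.

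Once $G$ has finite Fitting height, Proposition~\ref{pr-height} applies and gives that $\gamma_{\infty}(G)$ is finite; it remains to upgrade this to finiteness of $G$ itself. Consider $\bar G=G/\gamma_{\infty}(G)$: since $\gamma_{\infty}(\bar G)=1$, the quotient $\bar G$ is pronilpotent, the induced automorphism $\bar\varphi$ is again coprime, every element of $\bar G\langle\bar\varphi\rangle$ has a countable Engel sink, and $\bar G=[\bar G,\bar\varphi]$ because $G=[G,\varphi]$. Applying Lemma~\ref{l-copr3} to the pronilpotent group $\bar G$ now gives $\gamma_{\infty}(\bar G\langle\bar\varphi\rangle)=[\bar G,\bar\varphi]=\bar G$, which is finite. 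Hence $\bar G$ is finite, and since $\gamma_{\infty}(G)$ is finite we conclude that $G=[G,\varphi]$ is finite, as required.
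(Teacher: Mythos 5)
Your overall skeleton matches the paper's: reduce to $G=[G,\varphi]$ via Lemma~\ref{l-gff}, show $G$ has finite Fitting height, then finish with Proposition~\ref{pr-height} and Lemma~\ref{l-copr3}. The reduction and the final step are fine. But the middle step --- the only place where the finiteness of $\pi(G)$ must do real work --- is not a proof; it is a plan with a gap you yourself identify and do not close, and as described it would not go through.

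Two concrete problems. First, you ``fix a $\varphi$-invariant Fitting series of $G$'' and argue about its layers $L_i=F_i(G)/F_{i-1}(G)$ as though they cover $G$. At this point in the paper nothing guarantees even that $F(G)\neq 1$; that is Proposition~\ref{pr-f}, which is proved \emph{later} and whose proof uses Lemma~\ref{l-ha}, which in turn uses the present lemma. So an argument by contradiction of the form ``if the Fitting height is infinite then infinitely many layers satisfy $[L_i,\varphi]\neq 1$'' has no foundation: the series may simply stop, or fail to exhaust $G$. Second, the continuum-of-subsets trick that produces an uncountable sink needs the chosen sink elements to lie in pairwise commuting ``coordinates'' so that the infinite products $s_J$ converge and their components can be read off --- this is exactly why it works for the Sylow components of $F(G)$ in Proposition~\ref{pr-height} and for the direct factors in Lemma~\ref{l-cart}. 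Successive Fitting factors are not direct factors and do not commute, and no amount of Hall--Higman theory is invoked in the paper to manufacture such independence; you flag this as ``the main obstacle'' but leave it unresolved.

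The paper's actual route to finite Fitting height is much more elementary and uses $|\pi(G)|<\infty$ quite differently: for each $q\in\pi(G)$ take a $\varphi$-invariant Sylow $q$-subgroup $G_q$ (Lemma~\ref{l-inv}); by Lemma~\ref{l-copr2} each set $\{[g,\varphi]\mid g\in G_q\}$ is finite; since there are only finitely many primes, an open normal ($\varphi$-invariant) subgroup $N$ can be chosen meeting all these finite sets trivially, forcing $[N\cap G_q,\varphi]=1$ for all $q$ and hence $[N,\varphi]=1$; then Lemma~\ref{l-fng} gives $N\leq Z(G)$, so $G/Z(G)$ is finite and the Fitting height is finite. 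No Hall--Higman--type theorem is needed here (that enters only in Lemma~\ref{l-ha}). I would redo the middle of your argument along these lines.
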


\begin{proof}
By Lemma~\ref{l-gff} we can assume that $G=[G,\varphi ]$. For every prime $q\in \pi (G)$ there is a $\varphi$-invariant Sylow $q$-subgroup $G_q$ of $G$ by Lemma~\ref{l-inv}. By Lemma~\ref{l-copr2} the set $\{[g,\varphi]\mid g\in G_q\}$ is finite. Since $\pi (G)$ is finite, there is an open normal subgroup $N$ of $G$ that intersects trivially with every set $\{[g,\varphi]\mid g\in G_q\}$, which implies that $\varphi$ centralizes every Sylow $q$-subgroup $N\cap G_q$ and therefore $[N,\varphi]=1$. Since $N$ is normal and $G=[G,\varphi ]$, we obtain $[N,G]=1$ by Lemma~\ref{l-fng}. Thus, $G/Z(G)$ is finite and, in particular, the Fitting height of $G$ is finite. Then $\gamma _{\infty} (G\langle \varphi\rangle)$ is finite by Proposition~\ref{pr-height}, and therefore $[G,\varphi]$ is also finite, since $[G,\varphi ]\leq  \gamma _{\infty} (G\langle \varphi\rangle)$ by Lemma~\ref{l-copr3}  applied to $G\langle \varphi\rangle/ \gamma _{\infty} (G\langle \varphi\rangle)$.
\end{proof}

\begin{lemma}\label{l-ha}
Let $a$ be a coprime automorphism of prime order $p$ of a prosoluble group $H$ such that $2\not\in\pi (H)$. If every element of the semidirect product $H\langle a\rangle$ has a countable Engel sink, then the subgroup $[H,a]$ is finite.
\end{lemma}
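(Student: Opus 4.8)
The plan is to reduce the statement to a group of finite Fitting height and then quote Proposition~\ref{pr-height}. By Lemma~\ref{l-gff} I may pass to $[H,a]$ and thus assume $H=[H,a]$, since the countable-sink hypothesis, the coprimality, and the prime order of $a$ all survive this replacement. Granting for the moment that $H$ has finite Fitting height, Proposition~\ref{pr-height} gives that $\gamma_\infty(H\langle a\rangle)$ is finite, and then $[H,a]\le\gamma_\infty(H\langle a\rangle)$: in the pronilpotent quotient $H\langle a\rangle/\gamma_\infty(H\langle a\rangle)$ Lemma~\ref{l-copr3} forces $[\bar H,\bar a]=1$, exactly as at the end of the proof of Lemma~\ref{l-finp}. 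Hence $[H,a]$ is finite, as required, and everything reduces to bounding the Fitting height of $H$.

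For the bottom of the Fitting series I would argue as in Proposition~\ref{pr-height}. For each prime $q$ choose, by Lemma~\ref{l-inv}, an $a$-invariant Sylow $q$-subgroup; since such a subgroup is pro-$q$, hence pronilpotent, and every element of it with $a$ adjoined still has a countable Engel sink, Lemma~\ref{l-copr3} shows that the corresponding commutator subgroup is finite. Letting $P_q$ be the $a$-invariant Sylow $q$-subgroup of $F(H)$, I then run the continuum argument of Proposition~\ref{pr-height}: were $[P_q,a]\neq 1$ for infinitely many $q$, picking nontrivial $s_q\in[P_q,a]$ (these commute, lying in distinct normal Sylow subgroups of the pronilpotent group $F(H)$) and forming $s_\sigma=\prod_{q\in\sigma}s_q$ over the continuum of subsets $\sigma$ of the prime set would produce uncountably many distinct values $[s_\sigma,{}_k a]$, because by Lemma~\ref{l-copr2} the map $[g,a]\mapsto[g,a,a]$ permutes each finite set $\{[g,a]\mid g\in P_q\}$ and so keeps the $q$-component nontrivial precisely for $q\in\sigma$; this contradicts countability of a sink. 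Thus $[P_q,a]=1$ for all but finitely many $q$, so $a$ centralises the Hall subgroup $\prod_{q\notin S}P_q$ of $F(H)$ for some finite set $S$, and, this subgroup being normal in $H$, Lemma~\ref{l-fng} places it in $Z(H)$.

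The hard part will be to upgrade this control of the bottom layer into a bound on the Fitting height of the whole group, and this is where the hypotheses that $a$ has prime order $p$ and that $2\notin\pi(H)$ enter through a Hall--Higman--type theorem. I would set up a dichotomy on the primes genuinely moved by $a$ in the successive Fitting factors: the oddness of $|H|$ excludes the exceptional configurations, so on each chief factor not centralised by $a$ the automorphism acts with full minimal polynomial and therefore contributes a persistent nontrivial commutator to that factor. If infinitely many distinct primes occur among such noncentralised layers, the continuum argument of the previous paragraph should again be arranged to yield an uncountable sink, a contradiction; if only finitely many primes occur across all layers, the relevant prime set is finite and Lemma~\ref{l-finp} bounds the height. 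Either way $H$ acquires finite Fitting height, and the proof closes as in the first paragraph.

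The technical heart, which I expect to be the main obstacle, is the second horn of this dichotomy: unlike the Sylow subgroups of a single pronilpotent group $F(H)$, distinct Fitting layers form a tower rather than a direct product, so making the commutators coming from different layers commute and act independently (so as to manufacture continuum-many distinct sink elements) requires care. Combining this cross-layer independence with the Hall--Higman input under the single hypothesis $2\notin\pi(H)$, so that the prime-order action forces the desired behaviour on each noncentralised chief factor, is where the real work lies; the remaining steps are the bookkeeping reductions to Proposition~\ref{pr-height} and Lemma~\ref{l-finp} described above.
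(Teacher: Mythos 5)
Your opening reduction is correct and is exactly how the paper closes: once $H\langle a\rangle$ has finite Fitting height, Proposition~\ref{pr-height} together with Lemma~\ref{l-copr3} gives $[H,a]\le\gamma_\infty(H\langle a\rangle)$ finite. But the core of the proof --- actually establishing finite Fitting height --- is missing, and you acknowledge as much. The dichotomy you sketch over successive Fitting layers does not close: the cross-layer independence needed for your continuum argument fails for exactly the reason you name (the layers form a tower, not a direct product, so the would-be components of $[s_\sigma,\,{}_k a]$ do not lie in pairwise commuting normal subgroups and cannot be read off independently), and the other horn does not reduce to Lemma~\ref{l-finp}, whose hypothesis is that $\pi(H)$ itself is finite, not merely that finitely many primes occur among the non-centralised layers. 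Controlling $F(H)$ alone, as in your second paragraph, gives no purchase on the higher layers.

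The paper's actual mechanism is different. From Lemma~\ref{l1} one extracts a \emph{single} sink element $s$ with $[nb,\,{}_i a]=s$ for all $n$ in an open normal subgroup $N$; setting $\pi=\pi(H/N)$, one places $b$, and hence $s$, inside an $a$-invariant Hall $\pi$-subgroup $H_\pi$, where Lemma~\ref{l-finp} legitimately applies (as $\pi$ is finite) and makes $[H_\pi,a]$ finite. Conjugating the coset identity by elements of $C_N(a)$ yields the crucial containment $C_N(a)\le C_H(S)$ for $S=\langle s^{\langle a\rangle}\rangle$. The Hall--Higman input is then used once, not layer by layer: on a putative irreducible $\pi'$-section $V$ of a deep open subgroup $K$ on which $S$ acts nontrivially, coprimality and $2\notin\pi(H)$ force $C_V(a)\ne 1$, which by Lemma~\ref{l-cover} lifts into $C_K(a)\le C_N(a)\le C_H(S)$ and contradicts irreducibility. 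Hence $s$ acts as an Engel element on $K$, so $[K,s]\le F(K)$ by Baer's theorem, and modulo $F(K)$ one obtains $[\bar K,a,a]=1$, then $[\bar K,a]=1$ by Lemma~\ref{l-aac}, then $[\bar K,\bar H]=1$ by Lemma~\ref{l-fng}; thus $H/F(K)$ is centre-by-finite and $H\langle a\rangle$ has finite Fitting height. Without the single element $s$, the containment $C_N(a)\le C_H(S)$, and the passage to $H/F(K)$, your outline cannot be completed.
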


\begin{proof}
By Lemma~\ref{l-gff} we can assume that $H=[H,a]$. By Lemma~\ref{l1} there is a coset $bN$ of  an  open normal subgroup $N$ of $H$, a positive  integer $i$, and an element $s$ of an Engel sink of $a$ such that
\begin{equation}\label{e-ha}
[nb,\,{}_ia]=s \qquad \text{for all}\quad n\in N.
\end{equation}
Let $\pi=\pi(H/N)$. Then we can choose the element $b$ in \eqref{e-ha} in an $a$-invariant Hall $\pi$-subgroup $H_{\pi}$ of $H$ (which exists by Lemma~\ref{l-inv}). By Lemma~\ref{l-finp} the subgroup $[H_{\pi},a]$ is finite. Therefore $a$ has a finite smallest Engel sink $\mathscr E_{H_{\pi}}(a)$ in $H_{\pi}$. Replacing $i$ and $s$ in \eqref{e-ha} if necessary,  we can assume that $s\in \mathscr E_{H_{\pi}}(a)$ (for possibly a bigger integer $i$).

If $s=1$, then $[bN,a]=1$ by Lemma~\ref{l-aac}, and then $[N,a]=1$.  Since $N$ is normal and $H=[H,a ]$, we obtain $[N,H]=1$ by Lemma~\ref{l-fng}. Thus, $H/Z(H)$ is finite and, in particular, the Fitting height of $H$ is finite. Then $\gamma _{\infty} (H\langle a\rangle)$ is finite by Proposition~\ref{pr-height}, and $[H,a]\leq \gamma _{\infty} (H\langle a\rangle)$ by Lemma~\ref{l-copr3} applied to $H\langle a\rangle/ \gamma _{\infty} (H\langle a\rangle)$.

Therefore we can assume that $s$ is a nontrivial element of the finite smallest Engel sink $\mathscr E_{H_{\pi}}(a)$ contained in the finite group $[H_{\pi},a]$. By Lemma~\ref{l-min} we have $s=[s,\,{}_ka]$ for some $k\in {\mathbb N}$. Then the subgroup $S=\langle s^{\langle a\rangle}\rangle$ is finite of odd order coprime to $p=|a|$ and $S=[S,a]$.

If $x\in C_N(a)$, then taking the conjugates by $x$ of \eqref{e-ha} we obtain
$$
\begin{aligned}
s^x&=[b^x,\,{}_ia^x]\\&=[bn,\,{}_ia]\quad \text{for some }n\in N\\
&=s.
\end{aligned}
$$
Thus, $C_N(a)\leq C_H(s)$ and then also
\begin{equation}\label{e-cas}
C_N(a)\leq C_H(S).
\end{equation}

Recalling that $[H_{\pi},a]$ is finite, we choose an open normal subgroup $K$ of $H$ such that $K\cap [H_{\pi},a]=1$. Then $[K\cap H_{\pi},a]=1$.
We can of course choose $K\leq N$, so that we also have
 \begin{equation}\label{e-ha2}
[kb,\,{}_ia]=s \qquad \text{for all}\quad k\in K.
\end{equation}
 If $U$ is some $S\langle a\rangle $-invariant  $\pi$-section of $K$, then $a$ acts trivially on $U$ and therefore so does $S=[S,a]$ by Lemma~\ref{l-fng}.

We claim that $S$ also acts trivially on $S\langle a\rangle $-invariant  $\pi '$-sections of  $K$. If this is not the case, we can choose  an  $S\langle a\rangle $-invariant  elementary abelian $\pi '$-section $V$ of $K$ on which $S$ acts nontrivially and $S\langle a\rangle $ acts irreducibly. Let the bar denote the images of elements and subgroups of $S\langle a\rangle $ in the action on $V$. There is an $\bar a$-invariant Sylow $r$-subgroup $R$ of $\bar S$ such that $[R,\bar a]\ne 1$. Then $[R,\bar a]\langle \bar a\rangle$ is a $p$-soluble group without normal $p$-subgroups acting as a group of coprime automorphisms on $V$. Now the `non-modular' Hall--Higman--type results ensure that $C_V(a)\ne 1$. If $p\ne 2$, then $[R,\bar a]\langle \bar a\rangle$ is of odd order  and then $C_V(a)\ne 1$  by \cite[Theorem~IX.6.2]{hup-bla}.
If $p=2$, then one can choose an element in $R$ inverted by $a$ and then the situation is even simpler, for an action of a dihedral group of order $2r$. Since $a$ is a coprime automorphism of~$K$, every element of $ C_V(a)$ is an image of an element of $C_K(a)$ by Lemma~\ref{l-cover}.
By \eqref{e-cas} we obtain that $1\ne C_V(a)\leq  C_V(S\langle a\rangle)$. This contradicts the assumption that $S$ acts nontrivially and $S\langle a\rangle$ acts irreducibly on $V$.

As a result, we obtained that $S$ acts trivially on invariant $\pi$- and $\pi '$-sections of~$K$. Hence in any finite quotient of $K\langle s\rangle $ the image of $s$ is an Engel element  and therefore belongs to the Fitting subgroup by Baer's theorem \cite[Satz~III.6.15]{hup}. Then the subgroup  $[K,s]$ is pronilpotent and therefore $[K,s]\leq F(K)$.

We now consider the quotient $\bar H= H/F(K)$ denoting the images by bars.
Let $L= K\langle b, s, a\rangle$. Since $[\bar K,\bar s]=1$, while $b$ and $s$ belong to an $a$-invariant Hall $\pi$-subgroup $H_{\pi }$, the normal closure $\langle \bar s^{\bar L}\rangle$ is contained in $\bar H_{\pi }$.

Let tilde denote the images in the group $\tilde L=\bar L/\langle \bar s^{\bar L}\rangle$. By \eqref{e-ha2} we have
$$
[k\tilde b,\,{}_i\tilde a]=1 \qquad \text{for all}\quad k\in \tilde K.
$$
Since $\tilde a$ is a coprime automorphism, by Lemma~\ref{l-aac} we obtain $\tilde b\tilde K\subseteq C_{\tilde L}(\tilde a)$, and then also $\tilde K\leq C_{\tilde L}(\tilde a)$. In terms of the group $\bar H$ this means that $[\bar K, a]\leq \langle \bar s^{\bar L}\rangle \leq\bar H_{\pi }$. We also have $[\bar K, a]\leq \bar K$. Then
\begin{equation}\label{e-triv}
[\bar K, a,a]\leq [\langle \bar s^{\bar L}\rangle ,a]\cap \bar K \leq[\bar H_{\pi },a]\cap \bar K.
\end{equation}
 However, $[H_{\pi },a]\cap K=1$ by the choice of $K$, and $\bar H=H/F(K)$; therefore  the right-hand side of \eqref{e-triv} is also trivial. As a result we obtain
$[\bar K, a,a]=1$, whence   $[\bar K, a]=1$ by Lemma~\ref{l-aac}. Then also $[\bar K,\bar H]=1$ by Lemma~\ref{l-fng}, since $\bar H=[\bar H,a]$. Thus, $\bar H$ has a central subgroup of finite index, and therefore $\bar H$ has finite Fitting height. Since $\bar H=H/F(K)$, the whole group $H\langle a\rangle$ has finite Fitting height. By Proposition~\ref{pr-height} the group $\gamma _{\infty}(H\langle a\rangle)$ is finite. Since $a$ is a coprime automorphism of $H$, the subgroup $[H,a]$ is contained in $\gamma _{\infty}(H\langle a\rangle)$ by Lemma~\ref{l-copr3}, and therefore is also finite.
\end{proof}

Recall that any prosoluble group $G$ has a Sylow basis ---  a family of pairwise permutable Sylow $p_i$-subgroups $P_i$ of $G$, exactly one for each prime, and any two Sylow bases are conjugate (see  \cite[Proposition~2.3.9]{rib-zal}). The
basis normalizer (also known as the system normalizer) of such a Sylow basis in $G$ is $T=\bigcap _i N_G(P_i)$. If $G$ is a prosoluble group and $T$ is a basis normalizer in $G$, then $T$ is pronilpotent and $G =
\gamma _{\infty}(G)T$ (see \cite[Lemma~5.6]{rei}).

\begin{proposition}\label{pr-f}
If every element of a prosoluble group $G$ has a countable Engel sink, then $F(G)\ne 1$.
\end{proposition}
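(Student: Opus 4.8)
The plan is to argue by contradiction: assume that $G\neq 1$ but $F(G)=1$, and derive a contradiction. If $G$ were pronilpotent we would have $F(G)=G\neq 1$, so we may assume $\gamma_\infty(G)\neq 1$. I would work with a Sylow basis $\{P_r\}$ of $G$ and its basis normalizer $T=\bigcap_r N_G(P_r)$, which is pronilpotent and satisfies $G=\gamma_\infty(G)T$. Since $G$ is prosoluble and nontrivial, every finite continuous quotient has nontrivial abelianization, so $G/\overline{[G,G]}\neq 1$; as $T$ maps onto $G/\gamma_\infty(G)$, this forces $T\neq 1$. Choosing $1\neq t\in T$ of prime order $p$, I note that the $p$-element $t$ lies in the basis Sylow $p$-subgroup $P=P_p$ (a $p$-subgroup normalizing $P$ together with $P$ generates a $p$-group, hence lies in $P$ by maximality) and, belonging to $T$, normalizes every $P_r$ and hence the Hall $p'$-subgroup $H=\langle P_r\mid r\neq p\rangle$, acting on it as a coprime automorphism.

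The first substantial step is to show that $[H,t]$ is finite. When $p=2$ the group $H$ has odd order and $2=p\notin\pi(H)$, so Lemma~\ref{l-ha} applies directly and gives that $[H,t]$ is finite. When $p$ is odd I would split off the prime~$2$: by Lemma~\ref{l-inv} there are $t$-invariant Hall $\{2,p\}'$- and Sylow $2$-subgroups of $H$; Lemma~\ref{l-ha} applied to the former (which has odd order) and Lemma~\ref{l-finp} applied to the latter (whose prime set $\{2\}$ is finite) show that $t$ has finite commutator with each, and assembling these through the coprime decomposition $H=C_H(t)[H,t]$ yields that $[H,t]$ is finite. Consequently $C_H(t)$ is open in $H$, and $t$ centralizes an open $t$-invariant subgroup $H_0\leq H$.

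With this in hand, the aim is to produce a nontrivial normal nilpotent subgroup of $G$, contradicting $F(G)=1$. The clean case is when $[G,t]$ turns out to be finite: then $\langle t^G\rangle=\langle t\rangle[G,t]$ is a nontrivial finite normal subgroup of $G$, its Fitting subgroup $F(\langle t^G\rangle)$ is characteristic and hence normal in $G$ and nontrivial, so $1\neq F(\langle t^G\rangle)\leq F(G)$, a contradiction. Thus everything reduces to controlling $[G,t]$, and here lies the main obstacle: since $G=PH$ and $t\in P$, one has that $[G,t]$ is the $G$-normal closure of $[P,t]$ together with $[H,t]$, and while $[H,t]$ is finite the $p$-part $[P,t]$ need not be, because $t$ acts on the merely locally nilpotent pro-$p$ group $P$ by an inner automorphism, to which none of the coprime lemmas apply.

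To overcome this I would exploit the hypothesis $F(G)=1$, which forces $O_p(G)=1$, that is, the basis Sylow $p$-subgroup $P$ has trivial $G$-core. The plan is to pass to a minimal counterexample (minimizing, say, $|\pi(G)|$, which is at least $2$ since a pro-$p$ group is already pronilpotent) and to combine the facts that $t$ centralizes the open subgroup $H_0$ of the Hall $p'$-subgroup and that $O_p(G)=1$ in order to show that a suitable subgroup built from $C_H(t)$ or from the finite group $[H,t]$ generates a $G$-normal subgroup of the required nilpotent type. The triviality of $O_p(G)$ is exactly what should prevent the $p$-part $P$ from absorbing such a subgroup, and verifying this is expected to be the crux of the argument; once a nontrivial normal nilpotent subgroup is exhibited, the contradiction with $F(G)=1$ is immediate.
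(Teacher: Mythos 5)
Your overall strategy (Sylow basis, basis normalizer, a prime-order element acting coprimely on the Hall $p'$-part, Lemma~\ref{l-ha} for the odd part) runs parallel to the paper's, but there are two genuine gaps. The first is the step ``choosing $1\neq t\in T$ of prime order $p$'': a nontrivial pronilpotent group need not contain \emph{any} nontrivial element of finite order (the basis normalizer could be torsion-free, e.g.\ procyclic isomorphic to $\mathbb{Z}_p$), so such a $t$ may simply not exist, and all of your subsequent appeals to Lemma~\ref{l-ha} and to coprime-action lemmas hinge on it. The paper spends real effort manufacturing this element: it forms the basis normalizer $T_2$ of $\gamma_\infty(G)$ (normalized by $T_1$), observes that if $\gamma_\infty(G)\neq 1$ then $T_2T_1$ is a non-pronilpotent group of Fitting height $2$, so $1\neq\gamma_\infty(T_2T_1)$ is \emph{finite} by Proposition~\ref{pr-height} and lies in $T_2$; a nontrivial finite subgroup then supplies an element $a\in T_2$ of prime order. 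Choosing $a$ inside $\gamma_\infty(G)$, rather than merely in $T$, is also what lets the rest of the argument close up.

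The second gap is that your endgame is a plan, not a proof: you correctly identify the obstruction ($[P,t]$ is uncontrolled because $t$ acts on the Sylow $p$-subgroup as an inner automorphism), but the proposed resolution via $O_p(G)=1$ and a minimal counterexample is left unverified -- you say yourself that ``verifying this is expected to be the crux of the argument.'' The paper resolves this without any contradiction argument, by a trichotomy: take an open normal $a$-invariant $N\leq\gamma_\infty(G)$ avoiding the finite subgroups $[H,a]$ and $[S_2,a]$, so that $[N,a]$ is pronilpotent; if $[N,a]\neq1$ then $1\neq[N,a]\leq F(G)$; if $[N,a]=1$ then Lemma~\ref{l-fng} makes $[\gamma_\infty(G),a]$ central-by-finite, hence of finite Fitting height, and Proposition~\ref{pr-height} gives $F([\gamma_\infty(G),a])\neq1$ unless $[\gamma_\infty(G),a]=1$; and in that last case $a$ is an Engel element, so Baer's theorem makes $[G,a]\langle a\rangle$ a nontrivial pronilpotent normal subgroup. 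Some version of this case analysis is what your sketch is missing. (A smaller point: your ``assembling'' of the finiteness of $[H,t]$ from the $\{2,p\}'$- and $2$-parts also needs justification, since the Hall $p'$-subgroup is not the direct product of those pieces; the paper sidesteps this by intersecting with an open normal subgroup that avoids both finite commutator subgroups, concluding only that $[N,a]$ is pronilpotent.)
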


\begin{proof}

Let $\{P_i\mid i\in {\mathbb N}\}$ be a Sylow basis of $G$, where $P_i$ is a Sylow $p_i$-subgroup. Let
$$
T_1=\bigcap _iN_G(P_i)
$$
be a basis normalizer of $G$. Then  $T_1$ is pronilpotent and $G =
\gamma _{\infty}(G)T_1$. Note that $T_1\ne 1$, since $G$ is prosoluble.

The intersections $P_i\cap \gamma _{\infty}(G)$ are Sylow $p_i$-subgroups of $\gamma _{\infty}(G)$ forming a Sylow basis of $\gamma _{\infty}(G)$. Clearly, $T_1$ normalizes each of $P_i\cap \gamma _{\infty}(G)$.
Then
$$
T_2=\bigcap _iN_{\gamma _{\infty}(G)}(P_i\cap \gamma _{\infty}(G))
$$
is a basis normalizer of $\gamma _{\infty}(G)$, which is normalized by $T_1$. We know that $T_2$ is also pronilpotent, and $G=\gamma _{\infty}(G)T_1=\gamma _{\infty}(\gamma _{\infty}(G))T_2T_1$.

If $ \gamma _{\infty}(G)=1$, then there is nothing to prove. Otherwise, since $G$ is prosoluble, $\gamma _{\infty}(\gamma _{\infty}(G))\ne \gamma _{\infty}(G)$ and therefore the subgroup $T_2T_1$ is not pronilpotent, that is, $\gamma _{\infty}(T_2T_1)\ne 1$.
However,  $T_2T_1$ has Fitting height 2, and therefore $\gamma _{\infty}(T_2T_1)$ is a  finite group by Proposition~\ref{pr-height}. Therefore we can choose an element $a$ of prime order $p$ in $T_2$.

Since $T_2$ is a basis normalizer of $\gamma _{\infty}(G)$, for every prime $q$ there is an $a$-invariant Sylow $q$-subgroup $S_q$ of $\gamma _{\infty}(G)$.  Let $H$ be an $a$-invariant  Hall $\{p,2\}'$-subgroup of $\gamma _{\infty}(G)$ (which is simply a Hall $2'$-subgroup of $\gamma _{\infty}(G)$ in the case $p=2$). The following arguments include the cases where $H=1$ or/and $S_2=1$.

By Lemma~\ref{l-copr3} the subgroup $[S_2,a]$ is finite in the case $p\ne 2$. By Lemma~\ref{l-ha} the subgroup $[H,a]$ is finite. Therefore there is an open normal $a$-invariant subgroup $N$ of $\gamma _{\infty}(G)$ such that $N\cap [H,a]=1$, as well as $N\cap [S_2,a]=1$ in the case $p\ne 2$. Then $[N\cap H,a]=1$, as well as $[N\cap S_2,a]=1$  in the case $p\ne 2$.
Hence the subgroup $[N,a]$ is pronilpotent,  
and therefore,
$$
[N,a]\leq F(N)\leq F(\gamma _{\infty}(G))\leq F(G).
 $$
 Thus, the proposition is proved if $[N,a]\ne 1$.

If $[N,a]=1$, then also $[N,[\gamma _{\infty}(G),a]]=1$. Then $[\gamma _{\infty}(G),a]$ has a central subgroup of finite index and therefore has finite Fitting height. By Proposition~\ref{pr-height}, $\gamma _{\infty}([\gamma _{\infty}(G),a])$ is finite, and therefore $F([\gamma _{\infty}(G),a])\ne 1$ unless $[\gamma _{\infty}(G),a]=1$.
Since
$$
F([\gamma _{\infty}(G),a])\leq F(\gamma _{\infty}(G))\leq F(G),
$$
the proof is complete if  $[\gamma _{\infty}(G),a]\ne 1$.
 Finally, if $[\gamma _{\infty}(G),a]=1$, then $a$ is an Engel element since $G/\gamma _{\infty}(G)$ is locally nilpotent by Theorem~\ref{t2}. Then the normal subgroup  $[G,a]\langle a\rangle$ is pronilpotent by Baer's theorem \cite[Satz~III.6.15]{hup}, and $F(G)\ne 1$.
\end{proof}

We are now ready to prove the main result of this section.

\begin{theorem}\label{t3}
Suppose that $G$ is a prosoluble group in which every element has a countable Engel sink. Then $G$ has a finite normal subgroup $N$ such that $G/N$ is locally nilpotent.
\end{theorem}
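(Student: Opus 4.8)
The plan is to deduce Theorem~\ref{t3} from the two propositions just proved. First I would reduce the statement to showing that $\gamma_\infty(G)$ is finite. Indeed, once $\gamma_\infty(G)$ is known to be finite, the quotient $G/\gamma_\infty(G)$ has trivial nilpotent residual, hence is pronilpotent, and it inherits the countable Engel sink hypothesis as a section; so it is locally nilpotent by Theorem~\ref{t2}. Thus $N=\gamma_\infty(G)$ would be the required finite normal subgroup, and there is nothing more to check.

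To prove that $\gamma_\infty(G)$ is finite, the strategy is to show that $G$ has finite Fitting height and then invoke Proposition~\ref{pr-height}. Here the role of Proposition~\ref{pr-f} is to make the Fitting series climb: every section $G/F_i(G)$ is prosoluble and again satisfies the hypothesis (countable Engel sinks pass to sections by the Remark in Section~2), so as long as $F_i(G)\ne G$ we have $F(G/F_i(G))\ne 1$ by Proposition~\ref{pr-f}, that is $F_{i+1}(G)>F_i(G)$. Hence the Fitting series $F_1(G)<F_2(G)<\cdots$ is strictly increasing until it reaches $G$.

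The main obstacle is to rule out \emph{infinite} Fitting height, that is, to show that the series stabilizes at $G$ after finitely many steps rather than merely climbing indefinitely (or transfinitely). I expect this to be the heart of the argument and to be exactly where countability of the sinks is genuinely needed. I would argue by contradiction, in the spirit of the continuum argument from the proof of Proposition~\ref{pr-height}: were the Fitting height infinite, one should be able to select, at infinitely many distinct layers of the Fitting series, mutually commuting nontrivial contributions $s_i$ (for instance arising from prime-order elements of iterated basis normalizers, as produced in the proof of Proposition~\ref{pr-f}), arranged so that for each subset $\sigma$ of the index set the corresponding product is a \emph{distinct} value of $[\,\cdot\,,{}_{n}a]$ for a single suitably chosen element $a$. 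Since there are continuum-many such subsets, the Engel sink of $a$ would be uncountable, contrary to hypothesis. Ensuring that distinct subsets $\sigma$ really yield distinct sink elements (genuine independence across the layers) is the delicate point.

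Once finite Fitting height has been established, Proposition~\ref{pr-height} applies directly and gives that $\gamma_\infty(G)$ is finite, which by the first paragraph completes the proof. As an alternative route to the same conclusion, one could instead aim to verify that \emph{every} element of $G$ has a \emph{finite} Engel sink and then quote Theorem~\ref{t4.1}; the finiteness of the Fitting height would again be the essential input, after which the pronilpotent-by-finite structure of the relevant sections would let one pass from countable to finite sinks layer by layer, using Lemmas~\ref{l-finp} and~\ref{l-ha} for the coprime pieces.
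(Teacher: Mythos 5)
Your reduction to showing that $\gamma_\infty(G)$ is finite, and the further reduction to finite Fitting height via Proposition~\ref{pr-height}, both match the paper, and your use of Proposition~\ref{pr-f} to see that the upper Fitting series keeps climbing is correct. But the heart of the matter --- why the Fitting height is actually \emph{finite} --- is exactly the step you leave as a sketch, and the sketch you propose is not the right mechanism. The continuum argument in the proof of Proposition~\ref{pr-height} works because the elements $s_{p_j}$ live in distinct Sylow subgroups of the pronilpotent group $F(G)$, which are direct factors, so they genuinely commute and the components of $[s_\sigma,\,{}_k a]$ can be read off independently. Contributions taken from different layers of the Fitting series have no such commuting direct-product structure, there is no single element $a$ whose iterated commutators would act ``diagonally'' across infinitely many layers, and nothing guarantees that distinct subsets $\sigma$ yield distinct sink elements. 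So the delicate point you flag is not merely delicate; the approach does not go through.

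The paper closes this gap quite differently, in two short steps. First, Proposition~\ref{pr-height} applied to $F_2(G)$ gives that $\gamma_\infty(F_2(G))$ is finite; the centralizer $C=C_{F_2(G)}(\gamma_\infty(F_2(G)))$ then has finite index in $F_2(G)$ and is locally nilpotent (any finitely generated subgroup of $C$ has some $\gamma_k$ inside $\gamma_\infty(F_2(G))$, which $C$ centralizes), so $C\le F(G)$ and $F_2(G)/F(G)$ is \emph{finite}. Second, in $\bar G=G/F(G)$ the subgroup $F(\bar G)=\overline{F_2(G)}$ is finite, so one can choose an open normal subgroup $N$ of $\bar G$ with $N\cap F(\bar G)=1$; if $N\ne 1$ then Proposition~\ref{pr-f} gives $F(N)\ne 1$, yet $F(N)\le N\cap F(\bar G)=1$, a contradiction. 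Hence $\bar G$ is finite, so $G/F(G)$ is finite, the Fitting height is finite, and Proposition~\ref{pr-height} finishes the proof. Note that Proposition~\ref{pr-f} is used here not to climb the series step by step but to force an open normal subgroup avoiding the finite $F(\bar G)$ to be trivial --- that is the idea missing from your proposal.
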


\begin{proof}
By Theorem~\ref{t2} it is sufficient to prove that $\gamma _{\infty}(G)$ is finite.

By Proposition~\ref{pr-height} we obtain that $\gamma _{\infty}(F_2(G))$
is finite and the quotient $F_2(G)/\gamma _{\infty}(F_2(G))$ is locally nilpotent by Theorem~\ref{t2}. Then the subgroup $C=C_{F_2(G)}(\gamma _{\infty}(F_2(G)))$ has finite index in $F_2(G)$ and is locally nilpotent. Indeed, for any finite subset $S\subseteq C_{F_2(G)}(\gamma _{\infty}(F_2(G)))$ we have $\gamma _k(\langle S\rangle)\leq \gamma _{\infty}(F_2(G))$ for some $k$, and then
$$
\gamma _{k+1}(\langle S\rangle)=[\gamma _k(\langle S\rangle), \langle S\rangle]\leq [\gamma _{\infty}(F_2(G)), C]=1.
$$
As a normal locally nilpotent subgroup, $C$ is contained in $F(G)$. Hence, $F_2(G)/F(G)$ is finite.

We claim that the quotient $G/F(G)$ is finite. Let the bar denote the images in $\bar G=G/F(G)$. Then $F(\bar G)=\overline{F_2(G)}$ is finite by the above. There is an open normal subgroup $N$ of $\bar G$ such that $N\cap  F(\bar G)=1$. If $N\ne 1$, then $F(N)\ne 1$ by Proposition~\ref{pr-f}. But $F(N)\leq N\cap F(\bar G)=1$; hence we must have $N=1$, so $\bar G$ is finite.

Thus, $G/F(G)$ is finite, and therefore $G$ has finite Fitting height. By Proposition~\ref{pr-height} we obtain that $\gamma _{\infty}(G)$ is finite, as required.
\end{proof}

Here we also derive the following corollary for a virtually prosoluble group (that is, a group with a prosoluble open normal subgroup), which will be needed in the sequel.

\begin{corollary}\label{c-virt}
Suppose that $G$ is a virtually prosoluble group in which every element has a countable Engel sink. Then $G$ has a finite normal subgroup $N$ such that $G/N$ is locally nilpotent.
\end{corollary}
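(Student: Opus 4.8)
The plan is to reduce to the case where the prosoluble part is pronilpotent, analyse the action on each Sylow subgroup by means of the results already obtained for prosoluble groups, and finally produce a finite Engel sink for every element so that Theorem~\ref{t4.1} applies. First I would fix a prosoluble open normal subgroup $H$ of $G$. As a closed subgroup $H$ inherits the hypothesis, so Theorem~\ref{t3} shows that $\gamma_{\infty}(H)$ is finite; being characteristic in $H$, it is a finite normal subgroup $M$ of $G$. A finite Engel sink modulo $M$ has finite full preimage, so it is enough to treat $G/M$, and thus I may assume $H$ pronilpotent. Then $H=\prod_pH_p$ with every Sylow subgroup $H_p$ characteristic in $H$ and hence normal in $G$, and $H$ is locally nilpotent by Theorem~\ref{t2}; since $[x,g]\in H$ whenever $g\in H$, every element of $H$ is already an Engel element of $G$. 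It therefore remains to control the action of the finite group $G/H$ on the factors $H_p$.

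For a fixed $g\in G$ and a prime $p$ the subgroup $H_p\langle g\rangle$ is prosoluble by Lemma~\ref{l-prosol-by-prosol} (it is pro-$p$-by-procyclic), so Theorem~\ref{t3} yields a finite smallest Engel sink of $g$ inside $H_p$, trivial for all but finitely many $p$ by the counting argument of Proposition~\ref{pr-height} (distinct prime-supports of the commutators $[s_{\sigma},{}_ng]$ would otherwise produce uncountably many sink elements). To pass from $\langle g\rangle$ to the whole of $G/H$ I would use that a finite non-soluble group is generated by its $p'$-elements for every prime $p$. If $\bar g\in G/H$ is a $p'$-element, the Schur--Zassenhaus theorem gives $H_p\langle g\rangle=H_p\rtimes\langle\gamma\rangle$ with $\gamma$ a coprime automorphism of the locally nilpotent group $H_p$; by Lemma~\ref{l-copr2} the set $[H_p,\gamma]=\{[x,\gamma]\mid x\in H_p\}$ is then its finite smallest Engel sink, so $[H_p,\gamma]$ is finite and, by the counting argument again, trivial for almost all $p$. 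Consequently, for all but finitely many primes $p$ every $p'$-generator of $G/H$ centralises $H_p$, whence $G/H$ acts on $H_p$ by inner automorphisms and $G=H_pC_G(H_p)$; the finitely many exceptional primes, together with those dividing $|G/H|$, are handled through Theorem~\ref{t3} applied to the prosoluble preimage of the soluble radical of $G/H$ and through Lemma~\ref{l-finp}.

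The main obstacle is the final globalisation. The local analysis gives, for each element, a finite Engel sink with respect to conjugation by the elements of $H$, but combining these into a single finite sink valid for all $x\in G$ requires simultaneous control over all primes: one must rule out an infinite accumulation coming from the infinitely many primes on which $g$ acts as an Engel element of unbounded Engel length, and one must account for $x\notin H$, whose behaviour is governed by the finite sink of $\bar g$ in $G/H$. I expect this to be carried out by exploiting the inner action $G=H_pC_G(H_p)$ (so that $\gamma_{\infty}(G)\le C_G(H_p)$, since $H_p/Z(H_p)$ is pro-$p$ and hence pronilpotent) in order to confine the relevant commutators to the finitely many exceptional primes, followed by a Schur-type finiteness argument for the resulting central-by-finite quotient. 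Once every element of $G$ is shown to possess a finite Engel sink, Theorem~\ref{t4.1} applies to the compact group $G$ and produces the required finite normal subgroup $N$ with $G/N$ locally nilpotent, which then pulls back through $M$.
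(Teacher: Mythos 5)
Your opening reductions match the paper's: pass to $G/\gamma_\infty(H)$ so that the prosoluble open normal subgroup is pronilpotent (equivalently, $F(G)$ is open), and then analyse the action of the finite quotient on the Sylow subgroups $H_p$ of $F(G)$ via the coprime lemmas and the counting argument. But two steps do not hold up. First, the claim that a finite non-soluble group is generated by its $p'$-elements for every prime $p$ is false ($S_5$ is not generated by its $2'$-elements, since $O^2(S_5)=A_5$); it is true for non-abelian \emph{simple} groups, but you never reduce to that case. The paper gets there by an induction on $|G/F(G)|$ whose inductive step peels off proper normal subgroups above $F(G)$ and whose essential base case is $G/F(G)$ non-abelian simple; there one uses that the conjugacy class of a single element $g$ of prime-power order generates $G$ modulo $F(G)$, so that after killing the finite normal closure of the set $[T,g]$ (finite by Lemma~\ref{l-copr3}, with $T$ the Hall $p'$-part of $F(G)$) the subgroup $G_1=\langle g^{u_1},\dots,g^{u_k}\rangle$ covers $G/F(G)$ and centralises $T$. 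Without this reduction your ``pass from $\langle g\rangle$ to the whole of $G/H$'' step has no valid generating set to work with.

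Second, and more seriously, the final globalisation --- which you yourself flag as the main obstacle and leave at the level of ``I expect this to be carried out by...'' --- is exactly where the real work lies, and the tools you name do not close it. Knowing $G=H_pC_G(H_p)$ for almost all $p$, hence $\gamma_\infty(G)\le C_G(H_p)$ outside a finite exceptional set of primes, gives a central-by-finite configuration; Schur's theorem then bounds a derived subgroup, not $\gamma_\infty(G)$, and Proposition~\ref{pr-height} (which converts finite Fitting height into finiteness of $\gamma_\infty$) is stated only for \emph{prosoluble} groups, so it cannot be applied to the virtually prosoluble $G$ at this stage. The paper's resolution is a Schur-multiplier argument: in the simple base case $\gamma_\infty(\bar G_1)=[\gamma_\infty(\bar G_1),\gamma_\infty(\bar G_1)]$, so $\gamma_\infty(\bar G_1)\cap\bar T$ lies in both the centre and the derived subgroup of $\gamma_\infty(\bar G_1)$ and therefore embeds in the finite Schur multiplier of $G/F(G)$. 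That perfectness is again a consequence of the simplicity of $G/F(G)$, so both gaps trace back to the missing induction on $|G/F(G)|$. (Also, your plan to treat the exceptional primes via Lemma~\ref{l-finp} requires a coprime automorphism, which is unavailable for the primes dividing $|G/H|$.)
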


\begin{proof}
By Theorem~\ref{t2} it is sufficient to show that $\gamma _{\infty}(G)$ is finite.
By hypothesis, $G$ has an open normal prosoluble subgroup $H$.
 By Theorem~\ref{t3}, $\gamma _{\infty}(H)$ is finite. Therefore, passing to the quotient group,  we can assume that $\gamma _{\infty}(H)=1$ and the Fitting subgroup
$F(G)$ is open.

Since $G/F(G)$ is finite, we can use induction on $|G/F(G)|$.
       The basis of this induction includes the trivial case $G/F(G)=1$ when $\gamma _{\infty}(G)=1$. But the bulk of the proof deals with the case where $G/F(G)$ is a finite simple group.  If $G/F(G)$ is abelian, then $G$ has Fitting height 2 and  $\gamma _{\infty }( G)$ is finite by Proposition~\ref{pr-height} and the proof is complete.

Thus, suppose that $G/F(G)$ is a non-abelian finite simple group.
Let $p$ be a prime divisor of $|G/F(G)|$, and $g\in G\setminus F(G)$ an element of order $p^n$, where $n$ is either a positive integer or $\infty$ (so $p^n$ is a Steinitz number). Let $T$ be the Hall $p'$-subgroup of $F(G)$. By Lemma~\ref{l-copr3} the subgroup $[T , g]$ is finite.

Since $[T, g]$ is normal in $F(G)$, its normal closure $R=\langle [T, g]^G\rangle $ in $G$ is a product of finitely many conjugates and is therefore also finite.
Therefore it is sufficient to prove that $\gamma _{\infty }(G/R)$ is finite. Thus, we can assume that $R=1$. Note that then $[T, g^a]=1$ for any conjugate $g^a$ of $g$.

Choose a transversal $\{u_1,\dots, u_k\}$ of $G$ modulo $F(G)$.
 Let $G_1=\langle g^{u_1}, \dots ,g^{u_k}\rangle$. Clearly, $G_1F(G)/F(G)$ is generated by the conjugacy class of the image of $g$. Since $G/F(G)$ is simple, we have $G_1F(G)=G$. By our assumption, the Hall $p'$-subgroup $T$ of $F(G)$
 is centralized by all elements $g^{u_i}$. Hence,  $[G_1, T]=1$. Let $P$ be the Sylow $p$-subgroup of $F(G)$ (possibly, trivial). Then also $[PG_1, T]=1$, and therefore
 $$\gamma _{\infty }(G)=\gamma _{\infty }(G_1F(G))= \gamma _{\infty }(PG_1).$$

Let the bar denote images in $\bar G=G/P$. Note that $\gamma _{\infty}(\bar G)=\gamma _{\infty}(\bar G_1)$, while $F(\bar G)=\bar T$ and $\bar G/\bar T=\bar G_1\bar T/\bar T\cong F/F(G)$ is a non-abelian finite simple group. Hence, $\bar G=\gamma _{\infty}(\bar G_1)\bar T$. Therefore, since $[\gamma _{\infty}(\bar G_1), \bar T]=1$,
$$
\gamma _{\infty}(\bar G_1)=[\gamma _{\infty}(\bar G_1), \bar G_1]=[\gamma _{\infty}(\bar G_1),\gamma _{\infty}(\bar G_1)\bar T]=[\gamma _{\infty}(\bar G_1),\gamma _{\infty}(\bar G_1)].
$$
As a result, $\gamma _{\infty}(\bar G_1)\cap \bar{T}$ is contained both in the centre and the derived subgroup of $\gamma _{\infty}(\bar G_1)$, and therefore is isomorphic to a subgroup of the Schur multiplier of the finite group $\gamma _{\infty}(\bar G_1)/ (\gamma _{\infty}(\bar G_1)\cap \bar{T})\cong G/F(G)$. Since the Schur multiplier of a finite group is finite \cite[Hauptsatz~V.23.5]{hup}, we obtain that $\gamma _{\infty}(\bar G_1)\cap \bar{T}$ is finite. Since $\bar T$ is canonically isomorphic to $T$, it follows that
$$
\gamma _{\infty }(G)\cap T\cong\gamma _{\infty }(\bar G)\cap \bar T=\gamma _{\infty }(\bar G_1)\cap \bar T
$$
is also finite. Therefore  we can assume that $T=1$, in other words, that $F(G)$ is a $p$-group.

Since $G/F(G)$ is a non-abelian simple group, we can choose another prime $r\ne p$ dividing $|G/F(G)|$ and repeat the same arguments as above with $r$ in place of $p$. As a result, we reduce the proof to the case $F(G)=1$, where the result is obvious.

We now finish the proof of Corollary~\ref{c-virt} by induction on  $|G/F(G)|$. The basis of this induction where $G/F(G)$ is a simple group was proved above. Now suppose that $G/F(G)$ has  a nontrivial proper normal subgroup with full inverse image $N$, so that $F(G)<N\lhd G$. Since $F(N)=F(G)$, by induction applied to $N$ the group $\gamma _{\infty }(N)$ is finite. Since $N/\gamma _{\infty }(N)\leq F( G/\gamma _{\infty }(N))$,  by induction applied to $G/\gamma _{\infty }(N)$ the group $ \gamma _{\infty }(G/\gamma _{\infty }(N) )$ is also finite. As a result, $\gamma _{\infty }(G) $ is finite, as required.
\end{proof}

\section{Bounding the nonprosoluble length}

In  this section we approach
the case of profinite groups by obtaining bounds for the so-called nonprosoluble length. These bounds follow from the bounds for nonsoluble length of the corresponding finite quotients. We begin with the relevant definitions.

 The  \textit{nonsoluble length} $\lambda (H)$  of a finite group $H$ is defined as the minimum number of nonsoluble factors in a normal series in which every  factor  either is soluble or is a direct product of non-abelian simple groups. (In particular, the group is soluble if and only if its nonsoluble length is $0$.) Clearly, every finite group has a normal series with these properties, and therefore its nonsoluble length is well defined.  It is easy to see that the nonsoluble length $\lambda (H)$ is equal to the least positive integer $l$ such that there is a series of characteristic subgroups
\begin{equation*}
1=L_0\leqslant R_0 <  L_1\leqslant R_1<  \dots \leqslant R_{l}=H
\end{equation*}
in which each quotient $L_i/R_{i-1}$ is a (nontrivial) direct product of non-abelian simple groups, and each quotient $R_i/L_{i}$ is soluble (possibly trivial).

We shall use the following result of Wilson \cite{wil83}, which we state in the special case of $p=2$ using the terminology of nonsoluble length.

\begin{theorem}[{see \cite[Theorem~2*]{wil83}}]\label{t-wil83}
Let $K$ be a normal subgroup of a finite group~$G$. If a Sylow $2$-subgroup $Q$ of $K$ has a coset $tQ$ of exponent dividing $2^k$, then the nonsoluble length of $K$ is at most $k$.
\end{theorem}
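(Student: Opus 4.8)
The plan is to induct on $k$, using the characteristic-series description of nonsoluble length recalled just above and peeling off one layer of nonabelian simple groups at each step while halving the exponent of the relevant coset. The base case $k=0$ is immediate: the hypothesis forces $tQ=\{1\}$, so $Q=1$ and $K$ has odd order, whence $K$ is soluble by the Feit--Thompson theorem and $\lambda(K)=0$. So I would assume $k\ge 1$ and that the statement holds for $k-1$.

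For the inductive step I would first perform two reductions. Since $\lambda(K)=\lambda(K/R)$ for the soluble radical $R=R(K)$, since $R$ is characteristic in $K$ and hence normal in $G$, and since the image of $tQ$ in $G/R$ is a coset of the Sylow $2$-subgroup $QR/R$ of $K/R$ whose exponent still divides $2^k$, I may assume $R(K)=1$. Then $V:=\mathrm{Soc}(K)$ is a direct product of nonabelian simple groups, characteristic in $K$ and hence normal in $G$, and $C_K(V)=1$ (any nontrivial centralizer would contain a minimal normal subgroup lying in $\mathrm{Soc}(K)\cap C_K(V)=Z(V)=1$), so $K$ embeds in $\mathrm{Aut}(V)$.

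The heart of the matter is the claim that the image coset $\bar t\,\bar Q$ in $G/V$, with $\bar Q=QV/V$ a Sylow $2$-subgroup of $\bar K=K/V$, has exponent dividing $2^{k-1}$. Granting this, the inductive hypothesis applied to $\bar K\lhd G/V$ gives $\lambda(K/V)\le k-1$; since $V$ is a single nonsoluble layer, prepending it to a characteristic series realising $\lambda(K/V)$ yields $\lambda(K)\le 1+\lambda(K/V)\le k$, as desired. To prove the claim I would take an arbitrary element of $\bar t\,\bar Q$, lift it to a $2$-element $x\in tQ$ (automatic, since every element of $tQ$ has order dividing $2^k$), and show $x^{2^{k-1}}\in V$. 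If this failed, $y:=x^{2^{k-1}}$ would be an involution outside $V$, and by faithfulness it would induce a nontrivial automorphism of order $2$ on $V$.

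The remaining task is to derive a contradiction, and this is where the main obstacle lies. The strategy is an order-raising argument: multiplying $x$ on the right by a suitable $w$ in the Sylow $2$-subgroup $P=Q\cap V\le Q$ of $V$ keeps $xw$ inside $tQ$, and the goal is to choose $w$ with $(xw)^{2^k}\ne 1$, contradicting the exponent bound. Writing $\alpha$ for the automorphism of $V$ induced by $x$, one has $(xw)^{2^k}=\prod_{j=0}^{2^k-1}\alpha^{j}(w)$, a twisted product lying in $V$, and the required $w\in P$ with nonvanishing twisted product must be produced from the fact that the nontrivial $2$-automorphism $\beta=\alpha^{2^{k-1}}$ induced by $y$ cannot act too degenerately on a Sylow $2$-subgroup of a nonabelian simple factor: in particular it cannot invert a whole Sylow $2$-subgroup, and more generally cannot force the twisted product to vanish for every $w$. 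Establishing this order-raising step for the product $V=S_1\times\cdots\times S_m$ is the genuinely deep part and, after the routine preliminary of controlling how $\beta$ permutes the $S_i$ and acts within a stabilised factor, it relies on the classification of finite simple groups together with detailed information on Sylow $2$-subgroups and $2$-automorphisms of simple groups; this is precisely the point at which Wilson's argument must appeal to such structural input, and I would expect it to be the hard core of the proof.
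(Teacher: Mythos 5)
First, note that the paper does not prove this statement at all: it is quoted verbatim from Wilson's 1983 paper \cite[Theorem~2*]{wil83} and used as a black box, so there is no internal proof to compare yours with. Judged on its own terms, your proposal is an outline rather than a proof, and it has two genuine gaps. The decisive one is that the entire mathematical content of the theorem is concentrated in the ``order-raising'' step, which you do not prove but only describe as ``the genuinely deep part'' that you ``would expect'' to follow from the classification and detailed knowledge of Sylow $2$-subgroups and $2$-automorphisms of simple groups. Everything before that point (the reduction to $R(K)=1$, passing to $\mathrm{Soc}(K)$, the inequality $\lambda(K)\le 1+\lambda(K/V)$) is routine bookkeeping; deferring the one nontrivial assertion to an unproved structural lemma means the theorem has not been proved.

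There is also a concrete error in the inductive reduction itself. Your claim is that every $x\in tQ$ satisfies $x^{2^{k-1}}\in V$, and your justification that a failure produces a \emph{nontrivial} automorphism of $V$ rests on the faithfulness statement $C_K(V)=1$. But $t$ is only assumed to lie in $G$, not in $K$, so $y=x^{2^{k-1}}$ need not lie in $K$ and $C_K(V)=1$ says nothing about it. Indeed the claim is false as stated: take $G=A_5\times\mathbb{Z}/2^k$, $K=A_5$, $Q$ a Sylow $2$-subgroup of $A_5$ (elementary abelian), and $t$ a generator of the second factor; then $tQ$ has exponent dividing $2^k$, yet $x^{2^{k-1}}=(1,t^{2^{k-1}})\notin V=K$ for every $x\in tQ$, and $y$ centralizes $V$. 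So the coset $\bar t\,\bar Q$ in $G/V$ need not have exponent dividing $2^{k-1}$, and the induction as you set it up does not close. The argument has to be reformulated so that what is halved at each layer is the order of the \emph{automorphism of $V$} induced by elements of $tQ$ (equivalently, one must work modulo $VC_G(V)$ or restrict attention to $t\in N_K\langle t\rangle$-type configurations), and only then does one face the genuine core: showing that a $2$-element inducing a nontrivial automorphism on a product of nonabelian simple groups can have its order raised by multiplication by an element of an invariant Sylow $2$-subgroup. That lemma is precisely what Wilson proves, and it is not obtainable by the soft arguments sketched here.
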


We now turn to profinite groups. It is natural to say that a profinite group $G$ has finite \textit{nonprosoluble length} at most $l$ if $G$ has a normal series \begin{equation*}
1=L_0\leqslant R_0 <  L_1\leqslant R_1<  \dots \leqslant R_{l}=G
\end{equation*}
in which each quotient $L_i/R_{i-1}$ is a (nontrivial) Cartesian product of non-abelian finite simple groups, and each quotient $R_i/L_{i}$ is prosoluble (possibly trivial).
 As a special case of a general result in Wilson's paper \cite{wil83} we have the following.

\begin{lemma}[{see \cite[Lemma~2]{wil83}}]\label{l-nsl}
If, for some positive integer $m$, all continuous finite quotients of a profinite group~$G$ have nonsoluble length at most $m$, then $G$ has finite nonprosoluble length at most~$m$.
\end{lemma}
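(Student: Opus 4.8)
The plan is to construct the required normal series of $G$ directly, as a ``limit'' of the canonical nonsoluble series of the finite continuous quotients $G/U$. Recall that every finite group $H$ carries a canonical characteristic series $1=L_0(H)\le R_0(H)<L_1(H)\le\cdots\le R_m(H)=H$, in which $R_0(H)$ is the soluble radical, each $L_i(H)/R_{i-1}(H)$ is the socle of $H/R_{i-1}(H)$ — automatically a direct product of non-abelian simple groups, since $H/R_{i-1}(H)$ has trivial soluble radical — and each $R_i(H)/L_i(H)$ is the soluble radical of $H/L_i(H)$; the hypothesis $\lambda(H)\le m$ is exactly the statement that $R_m(H)=H$. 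For an open normal subgroup $U\trianglelefteq G$ I write $R_i^{*}(U)$ and $L_i^{*}(U)$ for the full preimages in $G$ of $R_i(G/U)$ and $L_i(G/U)$, and set $R_i=\bigcap_U R_i^{*}(U)$ and $L_i=\bigcap_U L_i^{*}(U)$, which are closed normal subgroups of $G$. The series formed by these will be the desired one.

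First I would establish the monotonicity $R_i^{*}(U)\subseteq R_i^{*}(V)$ and $L_i^{*}(U)\subseteq L_i^{*}(V)$ whenever $U\subseteq V$. This rests on two functoriality facts for a surjection $\psi\colon H\to \bar H$ of finite groups: (F1) $\psi$ maps the soluble radical into the soluble radical, because the image of a soluble normal subgroup is again soluble and normal; and (F2) after factoring out soluble radicals, $\psi$ maps the socle of $H/R_0(H)$ into that of $\bar H/R_0(\bar H)$, because a non-abelian simple normal subgroup maps either trivially or isomorphically onto a non-abelian simple, hence minimal normal, subgroup. Applying (F1) and (F2) alternately to the maps $G/U\to G/V$ and passing to successive quotients yields the monotonicity by induction on $i$. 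Since $L_0(G/U)=1$ gives $L_0=\bigcap_U U=1$, and $\lambda(G/U)\le m$ gives $R_m(G/U)=G/U$ whence $R_m=G$, the resulting chain runs $1=L_0\le R_0\le L_1\le R_1\le\cdots\le R_m=G$.

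Next I would read off the layer properties from these intersections. As $G/V$ is finite, for each $V$ the decreasing families $R_i^{*}(U)V/V$ and $L_i^{*}(U)V/V$ (over $U\subseteq V$) stabilise, and a single open normal $U\subseteq V$ may be chosen to realise all the stable values simultaneously; then $R_iV/V$ and $L_iV/V$ are precisely the images under $\psi\colon G/U\to G/V$ of $R_i(G/U)$ and $L_i(G/U)$. Consequently $R_i/(L_i(R_i\cap V))\cong (R_iV/V)/(L_iV/V)$ is a quotient of the soluble group $R_i(G/U)/L_i(G/U)$, hence soluble; letting $V$ vary shows that $R_i/L_i$ is prosoluble. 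Likewise $L_i/(R_{i-1}(L_i\cap V))$ is a quotient of the product of non-abelian simple groups $L_i(G/U)/R_{i-1}(G/U)$, and a quotient of such a product is again such a product, so $L_i/R_{i-1}$ is a Cartesian product of non-abelian finite simple groups. Discarding the layers $L_i/R_{i-1}$ that happen to be trivial and merging the resulting consecutive prosoluble layers (prosoluble-by-prosoluble is prosoluble by Lemma~\ref{l-prosol-by-prosol}) leaves a series with at most $m$ non-trivial non-abelian layers, which is exactly a nonprosoluble series for $G$ of length at most $m$.

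I expect the main obstacle to be fact (F2): the socle is not functorial under arbitrary surjections, and here the kernels $V/U$ of the maps in the inverse system need not be soluble, so the naive assertion that ``the canonical series is compatible'' fails. The point to get right is that, after factoring out the soluble radical, a non-abelian simple normal subgroup cannot collapse into the soluble radical of the image, and any non-trivial image of it is automatically minimal normal, which is what forces it into the target socle. The only other delicate step is the interchange of intersection and image used in the third paragraph; this is harmless because it takes place inside the finite group $G/V$, where descending families of subgroups stabilise.
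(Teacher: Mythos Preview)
The paper does not supply its own proof of this lemma; it merely cites it as a special case of \cite[Lemma~2]{wil83}. Your inverse-limit construction of the canonical series is the natural approach and is essentially how such statements are proved. The compactness argument identifying $R_iV/V$ and $L_iV/V$ with the stable images, and the passage from ``every finite quotient is soluble (resp.\ a product of non-abelian simples)'' to ``prosoluble (resp.\ a Cartesian product of non-abelian simples)'', are correct.

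There is, however, a genuine gap in your justification of (F2). The simple direct factors of $\mathrm{Soc}(H/R_0(H))$ are \emph{not} in general normal in $H/R_0(H)$; they are only subnormal (each is normal in a minimal normal subgroup). So the image of such a factor under the induced surjection $\psi'\colon H/R_0(H)\twoheadrightarrow \bar H/R_0(\bar H)$ need not be normal, let alone minimal normal. Nor does switching to an entire minimal normal subgroup $M$ repair the argument as you wrote it: the image $\psi'(M)$ is certainly normal, but it need not be \emph{minimal} normal, since the transitive action of $H$ on the simple factors of $M$ may be lost in the quotient.

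The clean fix is to argue via components. Each simple factor $S$ of $\mathrm{Soc}(H/R_0(H))$ is subnormal, so its image under $\psi'$ is either trivial or a subnormal non-abelian simple subgroup of $\bar H/R_0(\bar H)$, that is, a component. Because $\bar H/R_0(\bar H)$ has trivial soluble radical, its Fitting subgroup is trivial, so the generalised Fitting subgroup coincides with the layer and with the socle; hence every component lies in $\mathrm{Soc}(\bar H/R_0(\bar H))$. This yields $\psi'(\mathrm{Soc}(H/R_0(H)))\leq \mathrm{Soc}(\bar H/R_0(\bar H))$, which is precisely the inclusion $L_1^{*}(U)\subseteq L_1^{*}(V)$ you need. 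The monotonicity for higher $L_i$ and $R_i$ then follows by the induction you outlined, and the rest of your proof goes through.
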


We are now ready to prove the main result of this section.

\begin{proposition}\label{pr-fnl}
Suppose that $G$ is a profinite group
in which every element has a countable Engel sink. Then $G$ has finite nonprosoluble length.
\end{proposition}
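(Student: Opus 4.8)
The plan is to reduce everything, via Lemma~\ref{l-nsl}, to exhibiting a single integer $m$ that bounds the nonsoluble length $\lambda(G/M)$ of all finite continuous quotients $G/M$; then $G$ has nonprosoluble length at most $m$. The tool that converts $2$-local data into a bound on $\lambda$ is Wilson's Theorem~\ref{t-wil83}: inside a finite quotient it suffices to find a normal subgroup $K$, a Sylow $2$-subgroup $Q$ of $K$, and a coset $tQ$ of exponent dividing $2^{k}$, and then $\lambda(K)\leq k$. Choosing $K$ so that $G/M$ modulo $K$ is soluble (for instance taking for the top factor the largest $2$-quotient), one has $\lambda(G/M)=\lambda(K)$, so the whole task is to manufacture, with $k$ \emph{independent of} $M$, a $2$-element $t$ normalising a Sylow $2$-subgroup $Q$ of such a $K$ for which the coset $tQ$ has exponent dividing $2^{k}$.

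The observation that makes $k$ uniform is that it is enough to use a \emph{single} well-chosen $2$-element $t$ of $G$, rather than a statement uniform over all elements. First I would fix a Sylow $2$-subgroup $S$ of $G$ (these exist by the profinite Sylow theory recalled in Section~2) and a suitable $2$-element $t\in S$. For this fixed $t$, Lemma~\ref{l-engk} supplies integers $i,k$ and a coset $Nb$ of an \emph{open} normal subgroup $N$ of $G$ with $[[nb,\,{}_{i}t],t^{k}]=1$ for all $n\in N$; since $t$ is a $2$-element one may take $k$ to be a power of $2$, exactly as in the proof of Lemma~\ref{l-ad}. This is an identity in $G$ itself, so it descends unchanged to every finite quotient $G/M$, and the parameters $i,k,N$ do not depend on $M$. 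It is precisely the countability of the Engel sink that guarantees, through Theorem~\ref{bct} applied in Lemmas~\ref{l1} and~\ref{l-engk}, that such a coset identity exists at all.

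It then remains to turn this uniform coset identity into the bounded-exponent coset demanded by Theorem~\ref{t-wil83}. Working inside the pro-$2$ group $\langle t\rangle Q$, where $Q$ is a $t$-invariant Sylow $2$-subgroup of the chosen normal subgroup $K$, I would expand $(tq)^{2^{k'}}$ by the standard power-commutator identities and use the relation that $t^{k}$ centralises the commutator values $[g,\,{}_{i}t]$ along the coset to collapse the expansion, obtaining $(tq)^{2^{k'}}=1$ for all $q\in Q$ and a fixed $k'=k'(i,k)$. This is the same elementary mechanism by which a bounded left-Engel condition $[q,\,{}_{n}t]=1$ on a Sylow $2$-subgroup forces the relevant coset to have exponent dividing $2^{k'}$ once $2^{k'}$ exceeds the Engel bound. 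Theorem~\ref{t-wil83} then gives $\lambda(K)\leq k'$, hence $\lambda(G/M)\leq k'$ for all $M$, and Lemma~\ref{l-nsl} yields finite nonprosoluble length.

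The hard part will be exactly this last conversion. In Sections~3 and~4 the analogous passages were either handled by Zelmanov's Lie-theoretic machinery (Proposition~\ref{pr-pro-p}) or trivialised by coprimality, where $\langle t^{k}\rangle=\langle t\rangle$ could be exploited; here $t$ is a $2$-element acting on a pro-$2$ section, so no coprimality is available, and deducing an honest bounded-exponent coset identity from the mere fact that $t^{k}$ centralises a fixed commutator value is delicate. A second, subtler point is to choose the normal subgroup $K$ and the element $t$ compatibly, so that the coset $tQ$ actually detects the entire nonsoluble length of $G/M$ rather than of a proper section; this is where the reduction $\lambda(G/M)=\lambda(K)$ must be combined with the prosoluble results of Section~4 and with a careful localisation of the identity of Lemma~\ref{l-engk} to the Sylow $2$-subgroup.
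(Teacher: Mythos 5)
Your overall frame is right -- reduce to bounding the nonsoluble length of all finite quotients via Theorem~\ref{t-wil83} and then invoke Lemma~\ref{l-nsl} -- but the central step, which you yourself flag as ``the hard part'', is genuinely missing, and the mechanism you propose for it does not work. A bounded Engel condition $[q,\,{}_n t]=1$ on a Sylow $2$-subgroup does \emph{not} force a coset $tQ$ to have bounded exponent: already for $t=1$ and $Q$ a large abelian $2$-group every element is $1$-Engel while the coset $Q$ has unbounded exponent, so no power-commutator collapse of $(tq)^{2^{k'}}$ can succeed from this hypothesis alone. Moreover your coset identity from Lemma~\ref{l-engk} is quantified the wrong way round: it fixes the single element $t$ and lets $n$ range over a coset, whereas Theorem~\ref{t-wil83} needs a bounded power of \emph{every} element $y$ of a coset of a Sylow $2$-subgroup, i.e.\ an identity in the variable that ranges over the Sylow coset. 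Nothing in your argument converts one into the other.

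The paper gets around both obstacles by quite different means. First, instead of the one-variable sink of a single $t$, it applies Theorem~\ref{bct} to the sets $S_i\subseteq T\times T$ (where $T$ is a Sylow $2$-subgroup, locally nilpotent by Theorem~\ref{t2}) to obtain a genuinely two-variable coset identity: $\langle x,y\rangle$ is nilpotent of class at most $c$ for all $x\in aN$, $y\in bN$. Second, it uses Lemma~\ref{l-dop} together with Corollary~\ref{c-virt} (and the reduction to $G=[G,G]$) to replace the representative $a$ by an element of \emph{finite} order $2^n$ inside the finite subgroup $\gamma_\infty(H)$; only then does nilpotency of class $c$ yield $[a,y^{2^{n(c-1)}}]=1$ and hence the Engel-type relation $[z,\,{}_c\,y^{2^{n(c-1)}}]=1$ in the variable $y$ ranging over $bN$. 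Third, the passage to a bounded-exponent coset is not an exponent computation at all: in a finite quotient one looks at the generalized Fitting factor $L_1/R_0=S_1\times\dots\times S_k$, uses the Feit--Thompson theorem to plant nontrivial elements of $\bar NR_0/R_0$ in each $S_i$, and shows via the relation above that a bounded $2$-power of each $y\in\bar b\bar N$ must normalize every $S_i$, hence (by Schreier's conjecture) lies in $R_1$; the exponent bound needed for Theorem~\ref{t-wil83} thus holds only modulo $R_1$, which still suffices. Your proposal contains none of these three ingredients, so as it stands it is a plausible strategy outline rather than a proof.
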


\begin{proof}
Let $ H=\bigcap G^{(i)} $
be the intersection of the derived series of $G$.
Then $H=[H,H]$. Indeed,
if $H\ne [H,H]$, then the quotient $G/[H,H]$ is a prosoluble group by Lemma~\ref{l-prosol-by-prosol}, whence $\bigcap G^{(i)}=H\leq [H,H]$, a contradiction.
Since the quotient $G/H$ is prosoluble, it is sufficient to prove the proposition for $H$. Thus, we can assume from the outset that $G=[G,G]$.

Let $T$ be a Sylow $2$-subgroup of $G$. By Theorem~\ref{t2} the group $T$ is locally nilpotent. Consider the subsets of the direct product $T\times T$
$$
S_{i}=\{(x,y)\in T\times T\mid \text{the subgroup }\langle x,y\rangle\text{ is nilpotent of class at most }i\}.
$$
Note that each subset $S_{i}$ is closed in the product topology of $T\times T$, because the condition defining $S_i$ means that all commutators of weight $i+1$ in $x,y$ are trivial. Since every $2$-generator subgroup of $T$ is nilpotent, we have
$$
\bigcup _iS_{i}=T\times T.
$$
By
Theorem~\ref{bct} one of the sets $S_i$ contains an open subset of $T\times T$. This means that there are cosets $aN$ and $bN$ of an open normal subgroup $N$ of $T$ and a positive integer $c$  such that
\begin{equation}\label{e-2nilp}
\langle x,y\rangle\text{ is nilpotent of class }c\text{ for any }x\in aN,\; y\in bN.
\end{equation}

Let $K$ be an open normal subgroup of $G$ such that $K\cap T\leq N$. If we replace $N$ by $K\cap T$, then \eqref{e-2nilp} still holds with the same $a,b$. Hence we can assume that $N$ is a Sylow $2$-subgroup of $K$.

We now apply the following general fact (which, for example, immediately follows from \cite[Lemma 2.8.15]{rib-zal}).

\begin{lemma}\label{l-dop} Let $G$ be a profinite group and $K$ a normal open subgroup of $G$. There exists a subgroup $H$ of $G$ such that
$G=KH$ and $K\cap H$ is pronilpotent.
\end{lemma}

Let $H$ be the subgroup given by this lemma for our group $G$ and subgroup $K$. Since $H$ is virtually pronilpotent and every element has a countable Engel sink, by Corollary~\ref{c-virt} the subgroup $\gamma _\infty(H)$ is finite. Recalling our assumption that $G=[G,G]$, we obtain
$$
G=[G,G]=\gamma _{\infty}(G)\leq \gamma _{\infty}(HK)\leq \gamma _{\infty}(H)K.
$$
Thus, $G=\gamma _{\infty}(H)K$, where $\gamma _{\infty}(H)$ is a finite subgroup.

Hence we can choose the coset representative $a$ satisfying \eqref{e-2nilp}
in a conjugate of a Sylow $2$-subgroup of $\gamma _{\infty}(H)$, and therefore having finite order, say, $|a|=2^n$.

For any $y\in bN$ the $2$-subgroup $\langle a,y\rangle$ is nilpotent of class at most $c$, while  $a^{2^n}=1$. Then
\begin{equation}\label{e-ay2nc}
[a,y^{2^{n(c-1)}}]=1.
 \end{equation}
 This follows from well-known commutator formulae (and for any $p$-group); see, for example, \cite[Lemma~4.1]{shu00}.

In particular,  for any $z\in N$ by using \eqref{e-ay2nc} we obtain
\begin{equation}\label{e-2eng2}
[z,\,{}_c y^{2^{n(c-1)}}]=[az,\,{}_c y^{2^{n(c-1)}}]=1,
\end{equation}
since $\langle az,y^{2^{n(c-1)}}\rangle$ is  a subgroup of  $\langle az,y\rangle$, which is nilpotent of class $c$  by \eqref{e-2nilp}.

Our aim is to show that there is a uniform bound, in terms of $|G:K|$, $c$, and $n$,  for the nonsoluble length of all finite quotients of $G$ by open normal subgroups.
Let $M$ be an open normal subgroup of $G$ and let the bar denote the images in $\bar G=G/M$. It is clearly sufficient to obtain a required bound for the nonsoluble length of $\bar K$.

Let $R_0$ be the soluble radical of $\bar K$, and $L_1$ the inverse image of the generalized Fitting subgroup of $\bar K/R_0$, so that
\begin{equation}\label{e-soc}
L_1/R_0=S_1\times S_2\times \dots\times S_k
\end{equation}
is a direct product of non-abelian finite simple groups. Note that $R_0$ and $L_1$ are normal subgroups of $\bar G$. The group $\bar G$ acting by conjugation induces a permutational action on the set $\{S_1,S_2,\dots ,S_k\}$. The kernel of the restriction of this permutational action to $\bar K$ is contained in the inverse image $R_1$ of the soluble radical of $\bar K/L_1$:
\begin{equation}\label{e-soc2}
\bigcap _iN_{\bar K}(S_i)\leq R_1.
\end{equation}
This follows from the validity of Schreier's conjecture on the solubility of the outer automorphism groups of non-abelian finite simple groups, confirmed by the classification of the latter, because $L_1/R_0$ contains its centralizer in $\bar K/R_0 $.

Let $e$ be the least positive integer such that $2^{e}\geq c$, and let $t= 2^{n(c-1)+e}$. We claim that for any   $y\in \bar b\bar N$ the element  $y^{2^t}$ normalizes each factor $S_i$ in \eqref{e-soc}.
Arguing by contradiction, suppose that the element $y^{2^t}$ has a nontrivial orbit on the set of the  $S_i$. Then the element $y^{2^{n(c-1)}}$ has an orbit of length $2^s\geq  2^{e+1}$ on this set; let $\{T_1,T_2,\dots , T_{2^s}\}$ be such an orbit cyclically permuted by $y^{2^{n(c-1)}}$.
Since non-abelian finite simple groups have even order (by the Feit--Thompson theorem \cite{fei-tho}) and the subgroups $S_i$ are subnormal in $\bar K/R_0$, each subgroup $S_i$ contains a nontrivial element of $\bar NR_0/R_0$. If $x$ is a nontrivial element of $T_1\cap  \bar NR_0/R_0$, then the commutator
$$
[x,\,{}_c \bar y^{2^{n(c-1)}}],
$$
written as an element of $T_1\times T_2\times \dots\times  T_{2^s}$, has a nontrivial component in $T_{c+1}$ since $2^s\geq  2^{e+1}>c$. This, however, contradicts~\eqref{e-2eng2}.

Thus, for any element  $y\in \bar b\bar N$ the power $y^{2^t}$ normalizes each factor $S_i$ in \eqref{e-soc}. Let $2^{d}$ be the highest power of $2$ dividing $|G:K|$, and let $u=\max\{t,d\}$. Then $y^{2^u}\in R_1$ by \eqref{e-soc2}, since $y^{2^u}\in \bar K$ and $y^{2^u}$ normalizes each $S_i$ in \eqref{e-soc} by the choice of $u$.

As a result, in the quotient $\bar G/R_1$ all elements of the coset $\bar b\bar NR_1/R_1$ of the Sylow $2$-subgroup $\bar NR_1/R_1$ of $\bar K/R_1$ have exponent dividing $2^u$. We can now apply  Theorem~\ref{t-wil83}, by which the nonsoluble length of $\bar K/R_1$ is at most $u$. Then the  nonsoluble length of $\bar K$ is at most $u+1$. Clearly, the nonsoluble length of $\bar G/\bar K$ is bounded in terms of  $|G:K|$.
As a result, since the number $u$ depends only on $|G:K|$, $n$, and $c$, the nonsoluble length of $\bar G$ is bounded in terms of these parameters only. Since this  holds for any quotient of the profinite group $G$ by a normal open subgroup,  the group $G$ has finite nonprosoluble length by Lemma~\ref{l-nsl}. This completes the proof of Proposition~\ref{pr-fnl}.
\end{proof}

\section{Profinite groups}

We are now ready to handle the general case of profinite groups using Corollary~\ref{c-virt} on virtually prosoluble groups and induction on the nonprosoluble length.
First we eliminate infinite Cartesian products of non-abelian finite simple groups.

\begin{lemma}\label{l-cart}
Suppose that $G$ is a profinite group that is a Cartesian product of non-abelian finite simple groups. If every element of $G$ has a countable Engel sink, then $G$ is finite.
\end{lemma}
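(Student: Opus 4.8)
The plan is to argue by contradiction, in the spirit of the proof of Proposition~\ref{pr-height}: assuming $G=\prod_{i\in I}S_i$ is an infinite Cartesian product of non-abelian finite simple groups, I will exhibit a single element of $G$ whose Engel sink must be uncountable, contradicting the hypothesis. The device is again a continuum of subsets: different subsets $\sigma\subseteq I$ will produce elements of the sink with pairwise distinct supports.

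First I would manufacture, in each factor $S_i$, a commutator orbit that stays nontrivial forever. Since $S_i$ is a non-abelian finite simple group, its Fitting subgroup is trivial, so by Baer's theorem \cite[Satz~III.6.15]{hup} (identifying the Engel elements of a finite group with its Fitting subgroup) every nontrivial element of $S_i$ is a non-Engel element. Fix any nontrivial $g_i\in S_i$. Being an element of a finite group, $g_i$ has a finite smallest Engel sink by Lemma~\ref{l-min}, and this sink is different from $\{1\}$ because $g_i$ is not an Engel element; choose a nontrivial element $s_i$ in it. By Lemma~\ref{l-min} we have $s_i=[s_i,\,{}_{k_i}g_i]$ for some $k_i$, so the sequence $s_i,[s_i,g_i],[s_i,g_i,g_i],\dots$ is cyclic of period dividing $k_i$; none of its terms is trivial, for otherwise all later terms, and in particular $s_i$, would be trivial. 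Hence $[s_i,\,{}_ng_i]\ne 1$ for every $n\ge 0$.

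Next I set $g=(g_i)_{i\in I}\in G$, and for each subset $\sigma\subseteq I$ form the element $s_\sigma\in G$ whose $i$th component equals $s_i$ when $i\in\sigma$ and is trivial otherwise. Because commutators in a Cartesian product are computed componentwise, the $i$th component of $[s_\sigma,\,{}_ng]$ equals $[s_i,\,{}_ng_i]$ for $i\in\sigma$ and is trivial for $i\notin\sigma$. By the previous paragraph this component is nontrivial precisely when $i\in\sigma$, so the support of $[s_\sigma,\,{}_ng]$ is exactly $\sigma$ for every $n$. Applying the hypothesis to $g$, its Engel sink $\mathscr E(g)$ is countable, and for each $\sigma$ all sufficiently long commutators $[s_\sigma,\,{}_ng]$ lie in $\mathscr E(g)$; selecting one such commutator $w_\sigma$ yields an element of $\mathscr E(g)$ with support exactly $\sigma$.

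The assignment $\sigma\mapsto w_\sigma$ is therefore injective, since distinct subsets give elements with distinct supports. If $I$ were infinite, this would embed the $2^{|I|}$ subsets of $I$ into the countable set $\mathscr E(g)$, which is impossible. Hence $I$ is finite and $G$ is finite. The only step requiring genuine care is the first one, namely ensuring that each simple factor supplies an element with a permanently nontrivial commutator orbit; once the triviality of the Fitting subgroup of a non-abelian simple group and the cyclicity statement of Lemma~\ref{l-min} are in hand, the remainder is the componentwise bookkeeping of commutators in the Cartesian product together with the cardinality contradiction.
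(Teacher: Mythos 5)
Your proposal is correct and follows essentially the same route as the paper: Baer's theorem to get a nontrivial smallest Engel sink in each simple factor, Lemma~\ref{l-min} to ensure the commutator orbit of $s_i$ under $g_i$ never hits the identity, and then the continuum of subsets of the infinite index set mapping injectively (via supports) into the supposedly countable sink of $g=\prod_i g_i$. Your justification that no term $[s_i,\,{}_n g_i]$ can be trivial is in fact spelled out a little more explicitly than in the paper, but the argument is the same.
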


\begin{proof}
Suppose the opposite: then $G$ is a Cartesian product of infinitely many non-abelian finite simple groups $G_i$ over an infinite set of indices $i\in I$. Every subgroup $G_i$ contains an element $g_i\in G_i$ with a nontrivial smallest Engel sink $\mathscr E(g_i)\ne \{1\}$. (Actually, any nontrivial element of $G_i$ has a nontrivial Engel sink, since an Engel element of a finite group belongs to its Fitting subgroup by Baer's theorem \cite[Satz~III.6.15]{hup}.) By Lemma~\ref{l-min}, for any $s\in \mathscr E(g_i)$ we have $s=[s,\,{}_{k}g_i]$ for some $k\in {\mathbb N}$, and then also
\begin{equation}\label{e-cycl2}
s=[s,\,{}_{kl}g_i]\quad \text{for any}\;\, l\in {\mathbb N}.
\end{equation}

For every $i$, choose a nontrivial element $s_{i}\in \mathscr E(g_i)\subseteq G_i$.
For any subset $J\subseteq I$, consider the (infinite) product
$$
s_{J}=\prod _{j\in J} s_{j}.
$$
Let $$
g=\prod _{i\in I} g_{i}.
$$
If  $\mathscr E(g)$ is any Engel sink of $g$ in $G$, then for some $k\in {\mathbb N}$ the commutator $[s_{J},\,{}_kg]$ belongs to $\mathscr E(g)$. Because of the properties \eqref{e-cycl2}, all the components of $[s_{J},\,{}_kg]$ in the factors $G_j$ for $j\in J$ are nontrivial, while all the other components in $G_i$ for $i\not\in J$ are trivial by construction. Therefore for different subsets $J\subseteq I$ we thus obtain different elements of  $\mathscr E(g)$. The infinite set $I$ has at least continuum of different subsets, whence $\mathscr E(g)$ is uncountable, contrary to $g$ having a countable Engel sink by the hypothesis.
\end{proof}

\begin{theorem}\label{t4}
Suppose that $G$ is a profinite group in which every element has a countable Engel sink. Then $G$ has a finite normal subgroup $N$ such that $G/N$ is locally nilpotent.
\end{theorem}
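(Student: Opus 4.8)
The plan is to show that $\gamma_\infty(G)$ is finite; this suffices, because then $N=\gamma_\infty(G)$ is a finite normal subgroup and the quotient $G/N$ satisfies $\gamma_\infty(G/N)=1$, so it is pronilpotent and hence locally nilpotent by Theorem~\ref{t2} (it inherits countable Engel sinks). By Proposition~\ref{pr-fnl} the group $G$ has finite nonprosoluble length $l$, and I would argue by induction on $l$. When $l=0$ the group $G$ is prosoluble and $\gamma_\infty(G)$ is finite by Theorem~\ref{t3}, which is the base of the induction.

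For the inductive step, suppose $l\geq 1$ and fix a normal series $1=L_0\leq R_0<L_1\leq R_1<\dots\leq R_l=G$ realizing the nonprosoluble length, so that $R_0$ is prosoluble and $L_1/R_0$ is a Cartesian product of non-abelian finite simple groups. The first key point is that $L_1/R_0$ inherits countable Engel sinks as a section of $G$, so it is in fact \emph{finite} by Lemma~\ref{l-cart}. The quotient $G/L_1$ inherits the series with one fewer nonsoluble factor, hence has nonprosoluble length at most $l-1$; by the induction hypothesis $\gamma_\infty(G/L_1)$ is finite. Let $V$ be the inverse image of $\gamma_\infty(G/L_1)$ in $G$, a closed normal subgroup with $V/L_1$ finite and $G/V$ pronilpotent, so that $\gamma_\infty(G)\leq V$. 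Since $V/L_1$ and $L_1/R_0$ are both finite and $R_0$ is prosoluble and normal in $V$, the subgroup $R_0$ is open and prosoluble in $V$; thus $V$ is virtually prosoluble, and Corollary~\ref{c-virt} gives that $\gamma_\infty(V)$ is finite.

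It remains to pass from $\gamma_\infty(V)$ to $\gamma_\infty(G)$. As $\gamma_\infty(V)$ is characteristic in the normal subgroup $V$, it is a finite normal subgroup of $G$; set $\bar G=G/\gamma_\infty(V)$. Then $\bar V=V/\gamma_\infty(V)$ is pronilpotent and normal in $\bar G$, while $\bar G/\bar V\cong G/V$ is pronilpotent, so $\bar G$ is an extension of a prosoluble group by a prosoluble group and hence prosoluble by Lemma~\ref{l-prosol-by-prosol}. By Theorem~\ref{t3} the subgroup $\gamma_\infty(\bar G)$ is finite. Finally, since $\gamma_\infty(V)\leq\gamma_\infty(G)$, one has $\gamma_\infty(\bar G)=\gamma_\infty(G)/\gamma_\infty(V)$, whence $\gamma_\infty(G)$ is finite, completing the induction. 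I expect the main obstacle to be organizing the inductive step so that the intermediate subgroup $V$ can be recognized as virtually prosoluble: this hinges on the finiteness of the nonsoluble layer $L_1/R_0$ provided by Lemma~\ref{l-cart}, after which the collapse of the pronilpotent-by-pronilpotent quotient $\bar G$ to a prosoluble group reduces everything to the already established prosoluble case.
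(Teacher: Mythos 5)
Your proposal is correct and follows essentially the same route as the paper: induction on the nonprosoluble length supplied by Proposition~\ref{pr-fnl}, with Lemma~\ref{l-cart} making the simple layer $L_1/R_0$ finite and Corollary~\ref{c-virt} handling the resulting virtually prosoluble piece. The only difference is organizational: the paper applies Corollary~\ref{c-virt} directly to $L_1$ and then invokes the induction hypothesis on $G/\gamma_{\infty}(L_1)$, whose nonprosoluble length has dropped to $l-1$ because $R_1/\gamma_{\infty}(L_1)$ is prosoluble, whereas you apply the induction hypothesis to $G/L_1$ first and then clean up the bottom piece $V$ with Corollary~\ref{c-virt} and one extra application of Theorem~\ref{t3}; both are valid.
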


\begin{proof}
By Proposition~\ref{pr-fnl} the group $G$ has finite nonprosoluble length $l$. This means that $G$ has a normal series
\begin{equation*}
1=L_0\leqslant R_0 <  L_1\leqslant R_1< L_1  \leqslant \dots \leqslant R_{l}=G
\end{equation*}
in which each quotient $L_i/R_{i-1}$ is a (nontrivial) Cartesian product of non-abelian finite simple groups, and each quotient $R_i/L_{i}$ is prosoluble (possibly trivial).
We argue by induction on $l$. When $l=0$, the group $G$ is prosoluble, and the result follows by Theorem~\ref{t3}.

Now let $l\geq 1$. By Lemma~\ref{l-cart} each of the nonprosoluble factors $L_i/R_{i-1}$ is finite. In particular, the subgroup $L_1$ is virtually prosoluble, and therefore $\gamma _{\infty}(L_1)$ is finite by Corollary~\ref{c-virt}. The quotient $R_1/ \gamma _{\infty}(L_1)$ is prosoluble by Lemma~\ref{l-prosol-by-prosol}. Hence the  nonprosoluble length of $G/\gamma _{\infty}(L_1)$ is $l-1$. By the induction hypothesis we obtain  that $\gamma _{\infty}(G/\gamma _{\infty}(L_1))$ is finite, and therefore $\gamma _{\infty}(G)$ is finite. By Theorem~\ref{t2} the quotient $G/\gamma _{\infty}(G)$ is locally nilpotent, and the proof is complete.
\end{proof}

\section{Compact groups}
\label{s-comp}
In this section we prove the main Theorem~\ref{t1}
about compact groups with countable Engel sinks.  We use the structure theorems for compact groups and the results of the preceding section on profinite groups.  Parts of the proof are similar to the proof of the main result of \cite{khu-shu}, that is, Theorem~\ref{t4.1} on compact groups with finite Engel sinks. Rather than modifying the whole proof of that theorem, we are able to reduce the proof to the situation where all Engel sinks are finite, and then apply Theorem~\ref{t4.1}.

 By the well-known structure theorems (see, for example, \cite[Theorems~9.24 and 9.35]{hof-mor}), the connected component of the identity $G_0$ of a compact (Hausdorff) group $G$ is a divisible group such that $G_0/Z(G_0)$ is a Cartesian product of (non-abelian) simple compact Lie groups, while the quotient $G/G_0$ is a profinite group. (Recall that a group $H$ is said to be \emph{divisible} if for every $h\in H$ and every positive integer $k$ there is an element $x\in H$ such that $x^k=h$.)

We shall be using the following lemma from \cite{khu-shu}.

\begin{lemma}[{\cite[Lemma~5.3]{khu-shu}}] \label{l-eng}
Suppose that $G$ is a compact group in which every element has a finite Engel sink and the connected component of the identity $G_0$ is abelian. Then for every $g\in G$ and for any $x\in G_0$ we have
$$
[x,\,{}_kg]=1\quad \text{for some}\;\,k=k(x,g)\in {\mathbb N}.
$$
\end{lemma}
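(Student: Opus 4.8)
The plan is to reduce the statement to a local-nilpotency assertion about a single continuous endomorphism of $G_0$ and then to upgrade ``local'' to ``uniform'' via Baire category together with the connectedness of $G_0$. Since $G_0$ is a closed characteristic subgroup, conjugation by $g$ restricts to a continuous automorphism $\varphi$ of the compact connected abelian group $G_0$. Writing $G_0$ additively, one has $[x,\,{}_kg]=(\varphi-1)^kx$, so the assertion is precisely that the continuous endomorphism $\psi=\varphi-1$ is locally nilpotent: for each $x\in G_0$ some power $\psi^k$ kills $x$. First I would fix a finite Engel sink $\mathscr E(g)$ of $g$, which exists by hypothesis. As $G_0$ is normal, every commutator $[x,\,{}_kg]$ with $x\in G_0$ lies in $G_0$; hence for each $x\in G_0$ the tail of the sequence $[x,\,{}_kg]$ lands in the \emph{finite} set $F=\mathscr E(g)\cap G_0$.

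Next I would apply the Baire Category Theorem (Theorem~\ref{bct}). For $n\in{\mathbb N}$ put $Z_n=\{x\in G_0\mid [x,\,{}_mg]\in F\text{ for all }m\geq n\}$. Each $Z_n$ is closed (an intersection of preimages of the finite, hence closed, set $F$ under the continuous maps $x\mapsto[x,\,{}_mg]$), and $G_0=\bigcup_nZ_n$ because every $x\in G_0$ eventually enters $F$. Since $G_0$ is a compact Hausdorff group, Theorem~\ref{bct} yields an index $n_0$ such that $Z_{n_0}$ contains a nonempty open set $U$. Thus the continuous endomorphism $\theta\colon x\mapsto[x,\,{}_{n_0}g]$ (which equals $\psi^{n_0}$) sends the whole open set $U$ into the finite set $F$.

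The final step is to exploit connectedness. Let $K=\ker\theta$, a closed subgroup on whose cosets $\theta$ is constant. Then $U\subseteq\bigcup_{s\in F}\theta^{-1}(s)$ is covered by at most $|F|$ cosets of $K$, each of which is closed. A nonempty open set cannot be a finite union of closed sets with empty interior, so one of these cosets has nonempty interior, and therefore $K$ is open. But a connected group has no proper open subgroup, so $K=G_0$; that is, $\theta\equiv0$ and $[x,\,{}_{n_0}g]=1$ for all $x\in G_0$. This even provides a \emph{uniform} bound $k=n_0$, which is stronger than what is claimed.

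The main obstacle is the passage from the pointwise hypothesis (each $x$ is eventually absorbed into the sink) to a single exponent that works simultaneously, and this is exactly where the two ingredients are indispensable: the Baire argument requires $F$ to be \emph{finite} (so that each $Z_n$ is closed), which is why the finite---rather than merely countable---Engel sink hypothesis is used here, and the concluding reduction to $K=G_0$ requires $G_0$ to be connected. It is worth keeping in mind the cautionary example of an automorphism of finite order of a torus, for which $\psi=\varphi-1$ has infinite orbits and the Engel sink is infinite; this confirms that both hypotheses are genuinely needed.
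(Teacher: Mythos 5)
Your proof is correct, and it even establishes more than the statement asks for: a \emph{uniform} exponent $n_0=n_0(g)$ with $[G_0,\,{}_{n_0}g]=1$. Note, however, that this paper does not prove the lemma at all --- it is imported verbatim from \cite[Lemma~5.3]{khu-shu} --- so there is no in-paper argument to compare against line by line. Your route is nevertheless very much in the spirit of the techniques the present paper does use: the identification $[x,\,{}_kg]=(\varphi-1)^kx$ on the abelian normal subgroup $G_0$ is sound, the sets $Z_n$ are indeed closed because the sink is \emph{finite} (finite sets are closed in a Hausdorff group, so each $Z_n$ is an intersection of preimages of a closed set under continuous maps), and the Baire step is exactly the pattern of Theorem~\ref{bct} as deployed in Lemma~\ref{l1} and Proposition~\ref{pr-fnl}. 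The concluding step is also fine: a nonempty open set cannot be covered by finitely many closed sets all with empty interior (a finite union of nowhere dense sets is nowhere dense), so $\ker\psi^{n_0}$ has nonempty interior, hence is open, hence equals the connected group $G_0$. Your closing remark about the order-two automorphism of a torus, where $\varphi-1=-2$ has infinite orbits, correctly explains why the finiteness of the sink (not mere countability) and the connectedness of $G_0$ are both essential; this is consistent with the paper's need to first kill the simple Lie factors (Lemma~\ref{l-lie}) before invoking this lemma.
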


For compact groups with countable Engel sinks, we begin with eliminating simple Lie groups.

 \begin{lemma}\label{l-lie}
A non-abelian simple compact Lie group contains an element all of whose
Engel sinks are uncountable.
\end{lemma}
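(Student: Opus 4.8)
The plan is to exhibit a single element $g$ together with an \emph{uncountable} family of elements that is forced to lie inside \emph{every} Engel sink of $g$. The mechanism is the following observation about the commutator self-map $\psi_g\colon y\mapsto [y,g]=y^{-1}y^{g}$: if $y$ satisfies $y^{g}=y^{2}$, then $[y,g]=y^{-1}y^{2}=y$, and hence $[y,{}_{n}g]=y$ for all $n\ge 1$. Thus the whole $\psi_g$-orbit of such a $y$ is the constant $y$, its tail is $\{y\}$, and consequently $y$ must belong to any Engel sink $\mathscr E(g)$ of $g$ in $G$. It therefore suffices to produce an element $g$ for which the equation $y^{g}=y^{2}$ has uncountably many solutions $y$. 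The point worth stressing is that one should chase \emph{fixed points} of $\psi_g$ rather than long commutators of a generic $x$, since any single orbit is automatically countable.

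Next I would reduce to rank one. Since $G$ is a non-abelian compact simple Lie group, its Lie algebra is a simple compact Lie algebra, and a choice of root $\alpha$ yields a subalgebra isomorphic to $\mathfrak{su}(2)$. The corresponding analytic subgroup $H\le G$ is closed (analytic subgroups attached to semisimple subalgebras are closed) and connected with Lie algebra $\mathfrak{su}(2)$, so $H\cong SU(2)$ or $H\cong SO(3)$. As the elements $g,y\in H$ and the relations $[y,{}_{n}g]=y$ are computed inside $H\le G$, any solution family found in $H$ lies in $G$ and is constrained to lie in every Engel sink of $g$ in $G$. Hence it is enough to solve the problem inside $SU(2)$ and inside $SO(3)$ (both must be treated, since $SU(2)\not\hookrightarrow SO(3)$ and a root subgroup can be of either type).

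For the explicit construction, in $SO(3)$ I would take $g$ to be the rotation by $\pi$ about the $z$-axis, so that $g=\operatorname{diag}(-1,-1,1)$ and the $(-1)$-eigenspace of $g$ is the whole $xy$-plane. For a unit vector $v$ in that plane let $y_v$ be the rotation by $2\pi/3$ about the axis $v$. Conjugation rotates axes, so $y_v^{\,g}$ is the rotation by $2\pi/3$ about $g^{-1}v=-v$, which equals the rotation by $4\pi/3$ about $v$, i.e. $y_v^{2}$; thus $y_v^{\,g}=y_v^{2}$. The vectors $v$ range over a circle, giving uncountably many distinct $y_v$, each forced into every Engel sink of $g$. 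In $SU(2)$ (unit quaternions, with conjugation acting on the imaginary part as an $SO(3)$-rotation) I would take $g$ a pure unit imaginary quaternion, whose associated rotation is by $\pi$, and for $\hat v$ a unit imaginary quaternion orthogonal to $g$ set $y_v=-\tfrac12+\tfrac{\sqrt3}{2}\hat v$, an element of order $3$. Then $y_v^{2}=-\tfrac12-\tfrac{\sqrt3}{2}\hat v$ while $y_v^{\,g}=-\tfrac12+\tfrac{\sqrt3}{2}(-\hat v)$, so again $y_v^{\,g}=y_v^{2}$, and the orthogonal $\hat v$ form a circle, yielding uncountably many $y_v$.

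In either case every Engel sink of $g$ contains the uncountable set $\{y_v\}$, so all Engel sinks of $g$ are uncountable, as required. The real crux is the conceptual step of the first paragraph: realizing that the right target is the equation $y^{g}=y^{2}$, and that a continuum of solutions appears precisely because a $\pi$-rotation has a $2$-dimensional $(-1)$-eigenspace, furnishing a whole circle of order-$3$ elements fixed by $\psi_g$. The remaining ingredients—closedness of the root $SU(2)$-subgroup and the elementary quaternion/rotation identities—are routine.
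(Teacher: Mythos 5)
Your proof is correct, and it takes a recognisably different route from the paper's, even though both ultimately work with a rotation by $\pi$ in (a copy of) $SO_3(\mathbb R)$. The paper takes the one-parameter subgroup $a_\vartheta$ of rotations about a fixed axis inverted by $g$, computes $[a_\vartheta,{}_n g]=a_{(-2)^n\vartheta}$, and then must argue that the sink elements $a_{(-2)^{n(\vartheta)}\vartheta}$ obtained from continuum many seeds are pairwise distinct, which it does by choosing the angles $\vartheta$ linearly independent over $\mathbb Q$. You instead locate an uncountable family of \emph{fixed points} of the map $y\mapsto [y,g]$: the relation $y^g=y^2$ forces $[y,{}_n g]=y$ for all $n$, so each such $y$ literally lies in every Engel sink, and no separate distinctness-of-sink-elements argument is needed (in effect you have found the order-$3$ fixed points of the paper's doubling map, but parametrised by a circle of axes rather than a circle of angles). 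Your reduction also differs slightly: the paper passes to a \emph{section} isomorphic to $SO_3(\mathbb R)$ and invokes inheritance of the countable-sink property by sections, whereas you work inside a closed subgroup $H\cong SU(2)$ or $SO(3)$ of $G$ and observe that the identities $[y_v,{}_n g]=y_v$ hold verbatim in $G$, so the uncountable set $\{y_v\}$ sits inside every Engel sink of $g$ \emph{in $G$} with no transfer argument at all; the price is that you must treat $SU(2)$ separately, which you do correctly with the quaternion computation. Both arguments are sound; yours is marginally more self-contained at the final step, the paper's avoids the case split by passing to the common section $SO_3(\mathbb R)$.
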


\begin{proof}
It is well known  that any non-abelian compact Lie group $G$ contains a subgroup isomorphic either to $SO_3 (\mathbb{R})$ or $SU_2( \mathbb{C})$ (see, for example, \cite[Proposition~6.46]{hof-mor}), and therefore in any case, a section isomorphic to $SO_3 (\mathbb{R})$. Since the property that every element has a countable Engel sink is  inherited by sections, it is sufficient to consider the case $G=SO_3 (\mathbb{R})$.

Consider the following elements of $SO_3 (\mathbb{R})$:
$$
a_\vartheta =\begin{pmatrix} \cos \vartheta &\sin \vartheta &0\\-\sin \vartheta  &\cos \vartheta &0\\0&0&1\end{pmatrix},\quad \vartheta\in  \mathbb{R},
$$
and
$$
g=\begin{pmatrix} -1&0&0\\0&1&0\\0&0&-1\end{pmatrix}.
$$
We have
\begin{align*}
[a_\vartheta ,g]=a_\vartheta ^{-1}a^g&=\begin{pmatrix} \cos (-\vartheta )&\sin (-\vartheta )&0\\-\sin (-\vartheta ) &\cos (-\vartheta )&0\\0&0&1\end{pmatrix}\cdot \begin{pmatrix} \cos \vartheta &-\sin \vartheta &0\\\sin \vartheta  &\cos \vartheta &0\\0&0&1\end{pmatrix}\\ &=\begin{pmatrix} \cos (-2\vartheta )&\sin (-2\vartheta )&0\\-\sin(-2\vartheta ) &\cos (-2\vartheta )&0\\0&0&1\end{pmatrix}=a_{-2\vartheta },
\end{align*}
and then by induction,
$$
[a_\vartheta ,\,{}_ng]=a_{(-2)^n\vartheta }.
$$
Therefore  any Engel sink of $g$ must contain, for every $\vartheta \in \mathbb{R}$, an element of the form $a_{(-2)^{n(\vartheta )}\vartheta }$ for some $n(\vartheta )\in {\mathbb N}$. Since for $\vartheta $ we can choose continuum elements of $\mathbb{R}$ that are linearly independent over  $\mathbb{Q}$,
any Engel sink of $g$ must be uncountable.
\end{proof}

The next lemma is a step towards proving that every element has a finite Engel sink.

\begin{lemma}\label{l-ab}
Suppose that $G$ is a compact group in which every element has a countable Engel sink. If $G$ has an abelian subgroup $A$ with locally nilpotent quotient $G/A$, then every element of $G$ has a finite Engel sink.
\end{lemma}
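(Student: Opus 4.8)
The plan is to fix an element $g\in G$ and analyse the commutators $[x,\,{}_{n}g]$ through the endomorphism of $A$ induced by $g$. First I would set $\tau\colon A\to A$, $\tau(a)=[a,g]=a^{-1}a^{g}$; since $A$ is abelian and normal, $a^{g}\in A$, and a direct check gives $\tau(ab)=\tau(a)\tau(b)$, so $\tau$ is a continuous endomorphism of the compact abelian group $A$, with $[a,\,{}_{n}g]=\tau^{n}(a)$ for $a\in A$. Because $G/A$ is locally nilpotent, for each $x\in G$ the subgroup $\langle \bar x,\bar g\rangle$ of $G/A$ is nilpotent, so $[x,\,{}_{c}g]\in A$ for some $c=c(x)$; consequently $[x,\,{}_{n}g]=\tau^{\,n-c}\bigl([x,\,{}_{c}g]\bigr)$ for all $n\ge c$. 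Thus every tail of commutators of $g$ lives in $A$ and evolves under $\tau$, and it suffices to control the $\tau$-dynamics on $A$.

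The crux is to convert the countability of a sink into a Baire-category statement. Let $E$ be a countable Engel sink of $g$. Applying the sink condition to each $a\in A$ gives an $N=n(a,g)$ with $\tau^{N}(a)\in E$, so $a\in\tau^{-N}(e)$ for $e=\tau^{N}(a)\in E\cap A$. Hence
\[
A=\bigcup_{N\ge 0}\ \bigcup_{e\in E\cap A}\tau^{-N}(e).
\]
Each $\tau^{-N}(e)$ is either empty or a coset of the closed subgroup $\ker\tau^{N}$, hence closed, and the union is countable. By Theorem~\ref{bct} one of these cosets, say $\tau^{-N_0}(e_0)$, has non-empty interior in $A$; therefore $\ker\tau^{N_0}$ is an open (hence finite-index) subgroup of $A$, and $\tau^{N_0}(A)\cong A/\ker\tau^{N_0}$ is finite. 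I expect this step --- recognising that a countable sink forces the sets $\tau^{-N}(E)$ to exhaust $A$ and that Baire then makes some $\ker\tau^{N}$ open --- to be the main point; in particular one must resist arguing via a smallest sink, since for merely countable sinks the intersection of all sinks need not itself be a sink.

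Finally I would read off a finite sink. Set $\mathscr E(g)=\{1\}\cup\tau^{N_0}(A)$, a finite set by the previous step. For any $x\in G$ and any $n\ge c(x)+N_0$ we have $[x,\,{}_{n}g]=\tau^{\,n-c(x)}\bigl([x,\,{}_{c(x)}g]\bigr)\in\tau^{N_0}(A)$, so $\mathscr E(g)$ is an Engel sink of $g$. As $g$ was arbitrary, every element of $G$ has a finite Engel sink. Note that the argument uses only that the particular element $g$ has a countable sink, together with the hypotheses on $A$ and $G/A$.
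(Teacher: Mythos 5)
Your proof is correct and follows essentially the same route as the paper: both exploit that $x\mapsto[x,\,{}_ng]$ is a homomorphism on the abelian normal subgroup $A$ and use the Baire category theorem to find an open set on which an iterated commutator is constant, concluding that the image $\tau^{N_0}(A)$ is finite. Your phrasing via the endomorphism $\tau$ and the finite index of the open subgroup $\ker\tau^{N_0}$ is just a repackaging of the paper's covering of $A$ by finitely many translates of the open set $U$.
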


\begin{proof}
Since $G/A$ is locally nilpotent, for showing that an element $g\in G$ has a finite Engel sink we can obviously assume that $G=A\langle g\rangle$. Let $\{s_1,s_2,\dots \}$ be a countable Engel sink of $g$. By Lemma~\ref{l1}   there is an open subset  $U$ of $A$ and positive integers $m,n$ such that
$$
[u,\,{}_ng]=s_m \qquad \text{for all}\quad u\in U.
$$
The union of all translates $aU=\{au\mid u\in U\}$ over $a\in A$ is equal to $A$. Since $A$ is compact, it is equal to the union of finitely many such translates:
$$
A=a_1U\cup a_2U\cup \dots \cup a_kU.
$$
Then any element of $A$ has the form $a_iu$ for $u\in U$, and
$$
[a_iu,\,{}_ng]=[a_i,\,{}_ng][u,\,{}_ng]=[a_i,\,{}_ng]s_m,
$$
where we used the fact that $A$ is abelian. Hence the set
$$
S=\{[a_1,\,{}_ng]s_m,\dots ,[a_k,\,{}_ng]s_m\}
$$
is a finite Engel sink of $g$. Indeed, for any $x\in A$ and for any $k\geq n$ we have
\begin{equation*}
[x,\,{}_kg]=[[x,\,{}_{k-n}g],\,{}_ng]\in S.\tag*{\qedhere}
\end{equation*}
 \end{proof}

We are now ready to prove the main result.

\begin{theorem}\label{t5}
Suppose that $G$ is a compact group in which every element has a countable Engel sink. Then $G$ has a finite normal subgroup $N$ such that $G/N$ is locally nilpotent.
\end{theorem}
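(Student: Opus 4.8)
The plan is to reduce the problem to the situation already settled in Theorem~\ref{t4.1}: I will produce a finite normal subgroup of $G$ modulo which $G$ has an abelian normal subgroup with locally nilpotent quotient, and then invoke Lemma~\ref{l-ab} followed by Theorem~\ref{t4.1}. Throughout I use that the hypothesis passes to sections, and I split $G$ according to the structure theorem quoted above, where $G_0$ is divisible with $G_0/Z(G_0)$ a Cartesian product of non-abelian simple compact Lie groups and $G/G_0$ is profinite. First I would show that $G_0$ is abelian: if $G_0/Z(G_0)$ were nontrivial, projecting onto one factor would exhibit a non-abelian simple compact Lie group as a section of $G$, and by Lemma~\ref{l-lie} such a section has an element all of whose Engel sinks are uncountable, contradicting the inheritance of the hypothesis by sections. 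Hence $G_0=Z(G_0)$ is abelian; being connected and compact abelian, it has torsion-free Pontryagin dual.

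Next, since $G/G_0$ is profinite, Theorem~\ref{t4} supplies a finite normal subgroup of $G/G_0$; letting $M\lhd G$ be its full preimage, we have $G_0\le M$, the index $|M:G_0|$ is finite, and $G/M$ is locally nilpotent. The crux of the whole argument is the claim that $G_0\le Z(M)$, i.e. that every $g\in M$ acts trivially on $G_0$. Fix $g\in M$ and let $\tau\colon x\mapsto[x,g]$, an endomorphism of the abelian group $G_0$ with $[x,\,{}_ng]=\tau^n(x)$; the induced automorphism $\sigma$ of $G_0$ has finite order since $g^{|M:G_0|}\in G_0$ centralizes $G_0$. Consider the eventual image $C=\bigcap_n\tau^n(G_0)$, a closed connected subgroup on which $\tau$ restricts to a surjection. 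Because $g$ has a countable Engel sink $\{s_1,s_2,\dots\}$, every $x\in C$ satisfies $\tau^n(x)=s_l$ for some $n,l$, so $C=\bigcup_{n,l}\bigl(\tau^{-n}(s_l)\cap C\bigr)$ is a countable union of closed subsets of the compact group $C$; by Theorem~\ref{bct} one of them has nonempty interior, forcing $\ker(\tau^{n_0})\cap C$ to be open, and hence of finite index, in $C$. As $\tau$ is surjective on $C$, this makes $C\cong C/(\ker\tau^{n_0}\cap C)$ finite, and being connected, $C$ is trivial.

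Dualizing, $\bigcap_n\tau^n(G_0)=1$ translates into the statement that $\sigma-1$ is locally nilpotent on the torsion-free dual of $G_0$; since $\sigma$ has finite order it is semisimple over $\mathbb{Q}$, and a semisimple locally nilpotent operator is zero, whence $\sigma=1$. This proves $G_0\le Z(M)$. I expect this passage — extracting triviality of the action from mere countability of the sink — to be the main obstacle: the Baire-category argument on the compact group $C$ (to annihilate the connected part on which $\tau$ does not die out) combined with the duality and semisimplicity argument (to upgrade ``eventually trivial image'' to ``trivial action'') is precisely what replaces the much easier reasoning available when sinks are finite.

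Once $G_0\le Z(M)$ is established, $M/Z(M)$ is finite, so $[M,M]$ is finite by Schur's theorem (see, for example, \cite{rob}), and it is normal in $G$ as a characteristic subgroup of the normal subgroup $M$. Passing to $\bar G=G/[M,M]$, the image $\bar M$ of $M$ is an abelian normal subgroup with locally nilpotent quotient $\bar G/\bar M\cong G/M$, so Lemma~\ref{l-ab} shows that every element of $\bar G$ has a finite Engel sink, and Theorem~\ref{t4.1} then yields a finite normal subgroup $\bar N\lhd\bar G$ with $\bar G/\bar N$ locally nilpotent. Its preimage $N\lhd G$ is an extension of the finite group $[M,M]$ by the finite group $\bar N$, hence finite, and $G/N\cong\bar G/\bar N$ is locally nilpotent, which completes the proof.
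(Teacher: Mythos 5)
Your proof is correct, and its central step takes a genuinely different route from the paper's. The outer reductions coincide: Lemma~\ref{l-lie} to force $G_0$ abelian, Theorem~\ref{t4} applied to the profinite quotient $G/G_0$, Schur's theorem, and then Lemma~\ref{l-ab} followed by Theorem~\ref{t4.1}. The difference lies in proving that a normal subgroup that is finite over $G_0$ centralizes $G_0$. The paper first shows (Lemma~\ref{l-eng2}, via Lemma~\ref{l-ab} and Lemma~5.3 of \cite{khu-shu}) that every element of $G$ acts on $G_0$ as an Engel-type automorphism, and then proves in Lemma~\ref{l-central} only that the subgroup $E$ generated by the Engel sinks of $G/G_0$ acts trivially, by decomposing $G_0=A_0\times\prod_p A_p$, running finite $p$-group and Maschke arguments on the torsion and torsion-free parts separately, and closing with a divisibility trick. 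You instead show that \emph{every} element of the full preimage $M$ of the finite normal subgroup of $G/G_0$ centralizes $G_0$, by applying the Baire category theorem to the eventual image $C=\bigcap_n\tau^n(G_0)$ of the endomorphism $\tau\colon x\mapsto[x,g]$ and then passing to the torsion-free Pontryagin dual, where local nilpotence of $\hat\sigma-1$ together with semisimplicity of the finite-order $\hat\sigma$ forces $\sigma=1$. This is a cleaner mechanism, slightly stronger at the intermediate stage, and it bypasses Lemma~\ref{l-eng2} entirely; the price is the use of duality, which the paper avoids. Your argument is sound, but two facts asserted in passing deserve a line of justification: that $C$ is connected and that $\tau$ restricts to a surjection of $C$ (both follow from the finite intersection property applied to the decreasing chain of compact connected sets $\tau^n(G_0)$), and that $\tau^{-n}(s_l)\cap C$ is either empty or contained in a single coset of $\ker(\tau^{n})\cap C$, which is what converts ``nonempty interior'' into ``open kernel''. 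None of this affects correctness.
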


\begin{proof} In view of Lemma~\ref{l-lie}, the connected component of the identity $G_0$ is an abelian divisible normal subgroup.

\begin{lemma} \label{l-eng2}
For every $g\in G$ and for any $x\in G_0$ we have
$$
[x,\,{}_kg]=1\quad \text{for some}\;\,k=k(x,g)\in {\mathbb N}.
$$
\end{lemma}

\begin{proof}
We can obviously assume that $G=G_0\langle g\rangle$. The group $G_0\langle g\rangle$ satisfies the hypothesis of Lemma~\ref{l-ab} and therefore every element in it has a finite Engel sink. Then for any $x\in G_0$ we have $[x,\,{}_kg]=1$ for some $k=k(x,g)\in {\mathbb N}$  by Lemma~\ref{l-eng}.
\end{proof}

We proceed with the proof of Theorem~\ref{t5}. Applying Theorem~\ref{t4} to the profinite group $\bar G=G/G_0$ we obtain a finite normal subgroup $D$ with locally nilpotent quotient. Then every element $g\in\bar G$ has a finite smallest Engel sink $\bar{\mathscr E}(g)$ contained in $D$. Consider the subgroup generated by all such sinks:
$$
E=\langle \bar{\mathscr E}(g)\mid g\in \bar G\rangle\leq D.
$$
Clearly, $\bar{\mathscr E}(g)^h=\bar{\mathscr E}(g^h)$ for any $h\in\bar G$; hence  $E$ is a normal finite subgroup of $\bar G$. Note that $\bar G/E$ is also locally nilpotent by
Theorem~\ref{t-wz} as an Engel profinite group.

We now consider the action of  $\bar G$ by automorphisms  on $G_0$ induced by conjugation.

\begin{lemma}\label{l-central}
The subgroup $E$ acts trivially on $G_0$.
\end{lemma}

\begin{proof}
The abelian divisible group $G_0$ is a direct product $A_0\times\prod _pA_p$ of a torsion-free divisible group $A_0$ and divisible Sylow $p$-subgroups $A_p$ over various primes $p$. Clearly, every Sylow subgroup $A_p$ is normal in $G$.

First we show that $E$ acts trivially on each $A_p$. It is sufficient to show that for every $g\in \bar G$ every element  $z\in \bar{\mathscr E}(g)$ acts trivially on $A_p$. Consider the action of $\langle z, g\rangle$ on $A_p$. Note that $\langle z, g\rangle=\langle z^{\langle  g\rangle}\rangle\langle  g\rangle$, where $\langle z^{\langle  g\rangle}\rangle$ is  a finite $g$-invariant subgroup, since it is contained in the finite subgroup $E$. For any $a\in A_p$ we have $[a,\,{}_kg]=1$ for some $k=k(a,g)\in {\mathbb N}$  by Lemma~\ref{l-eng2}. Hence the subgroup
$$
\langle a^{\langle g\rangle}\rangle=\langle a,[a,g], [a,g,g],\dots \rangle
$$
is a finite $p$-group; note that this subgroup is  $g$-invariant.
 The images of $\langle a^{\langle g\rangle}\rangle$ under the action of elements of the finite group  $\langle z^{\langle  g\rangle}\rangle$  generate a finite $p$-group $B$, which is $\langle z, g\rangle$-invariant. It follows from Lemma~\ref{l-eng2} that $\langle z, g\rangle/C_{\langle z, g\rangle}(B)$
 must be a $p$-group. Indeed, otherwise there is a $p'$-element $y\in \langle z, g\rangle/C_{\langle z, g\rangle}(B)$ that acts non-trivially on the Frattini quotient $V=B/\Phi (B)$. Then $[[V,y],y]=[V,y]\ne 1$ and $C_{[V,y]}(y)=1$, whence  $[V,y]=\{[v,y]\mid v\in [V,y]\}$ and therefore also $[V,y]=\{[v,\,{}_ny]\mid v\in [V,y]\} $ for any $n$, contrary to Lemma~\ref{l-eng2}. Thus, $\langle z, g\rangle/C_{\langle z, g\rangle}(B)$ is a finite $p$-group. But since $z\in \bar{\mathscr E}(g)$, by Lemma~\ref{l-min} we have  $z=[z,{}_mg]$ for some $m\in {\mathbb N}$. Since a finite $p$-group is nilpotent, this implies that $z\in C_{\langle z, g\rangle}(B)$. In particular, $z$ centralizes $a$. Thus, $E$ acts trivially on $A_p$, for every prime~$p$.

 We now show that $E$ also acts trivially on the quotient $W=G_0/\prod _pA_p$ of $G_0$ by its torsion part. Note that $W$ can be regarded as a vector space over ${\mathbb Q}$. Every element $y\in E$ has finite order and therefore by Maschke's theorem   $W=[W,y]\times C_W(y)$ and $[W,y]=\{[w,\,{}_ny]\mid w\in [W,y]\} $  for any $n$. If $[W,y]\ne 1$, then this contradicts Lemma~\ref{l-eng2}.

 Thus, $E$ acts trivially both on $W$ and on  $\prod _pA_p$. Then any automorphism $\eta$ of $G_0$ induced by conjugation by $h\in E$ acts on every element $a\in A_0$ as $a^{\eta}=a^h=at$, where $t=t(a,h)$ is an element of finite order in $G_0$. Then  $a^{\eta ^i}=at^i$, and therefore the order of $t$ must divide the order of $\eta$.

 Assuming the action of $E$ on $G_0$ to be non-trivial, choose an element $h\in E$  acting on $G_0$ as an automorphism $\eta$ of some prime order $p$. Then there is $a\in A_0$ such that $a^h=as$, where $s\in A_p$ has order $p$. There is an element $a_1\in A_0$ such that $a_1^{p}=a$. Then $a_1^h=a_1s_1$, where $s_1^{p}=s$. Thus, $|s_1|=p^{2}$, and therefore  $p^{2}$ divides the order of $\eta $. We arrived  at a contradiction with $|\eta |=p$.
\end{proof}

We now finish the proof of Theorem~\ref{t5}. Let $F$ be the full inverse image of $E$ in $G$. Then we have normal subgroups $G_0\leq F\leq G$ such that $G/F$ is locally nilpotent, $F/G_0$ is finite, and $G_0$ is contained in the centre of $F$ by Lemma~\ref{l-central}. Since $F$ has centre of finite index, the derived subgroup $F'$ is finite by Schur's theorem \cite[Satz~IV.2.3]{hup}. The quotient $G/F'$ is an extension of an abelian subgroup by a locally nilpotent group. Hence every element of $G/F'$ has a finite Engel sink by Lemma~\ref{l-ab}. By Theorem~\ref{t4.1} the group  $G/F'$ has a finite normal subgroup with locally nilpotent quotient. The full inverse image of this subgroup is a required finite normal subgroup $N$ such that $G/N$ is locally nilpotent.
The proof of  Theorem~\ref{t5} is complete.
 \end{proof}

 \section*{Acknowledgements}
The authors thank John Wilson for stimulating discussions.

The work is supported by Mathematical Center in Akademgorodok, the agreement with Ministry of Science and High Education of the Russian Federation no.~075-15-2019-1613. The first author thanks CNPq-Brazil and the University of Brasilia for support and hospitality that he enjoyed during his visit to Brasilia in 2019.  The second author was supported by FAPDF and CNPq-Brazil.

\end{document}